\newtheorem{theorem}{Theorem}[section]
\newtheorem{proposition}[theorem]{Proposition}
\newtheorem{corollary}[theorem]{Corollary}
\theoremstyle{definition}
\theoremstyle{remark}
\newtheorem{remark}[theorem]{Remark}
\newcommand{\ar}{\mathcal{AR}}
\newcommand{\arr}{\mathcal{AR}_{a,b}^k}
\newcommand{\al}{\beta}
\newcommand{\be}{\alpha}
\newcommand{\ga}{\gamma}
\newcommand{\de}{\delta}
\newcommand{\oo}{\ooo(a)_{i,j}}
\newcommand{\mm}{\mathcal{M}}
\newcommand{\dk}{\ar^{k-1,j}_{a,b}}
\newcommand{\pp}{\prime}
\newcommand{\dkk}{\ar_{a,b-1}^{k-2,j-1}}
\newcommand\hyper[2]{\genfrac{}{}{0pt}{}{#1}{#2}}
\DeclareMathOperator{\pf}{Pf}
\DeclareMathOperator{\m}{M}
\DeclareMathOperator{\ad}{AD}
\DeclareMathOperator{\vv}{V}
\DeclareMathOperator{\ooo}{O}
\DeclareMathOperator{\opp}{OPP}
\DeclareMathOperator{\adj}{ADJ}
\DeclareMathOperator{\hex}{H}
\numberwithin{equation}{section}
\newcommand{\abs}[1]{\lvert#1\rvert}
\begin{document}

\title[Aztec Rectangles with boundary defects]{Enumeration of Domino Tilings of an Aztec Rectangle with boundary defects}

\author[M. P. Saikia]{Manjil P. Saikia}

\address{Universit\"at Wien, Fakult\"at f\"ur Mathematik, Oskar-Morgenstern-Platz 1, 1090 Wien, Austria} 
\email{manjil.saikia@univie.ac.at}

\thanks{Supported by the Austrian Science Foundation FWF, START grant Y463.}

\subjclass[2010]{Primary 05A15, 52C20; Secondary 05C30, 05C70}

\keywords{Domino tilings, Aztec Diamonds, Aztec Rectangles, Kuo condensation, Graphical condensation, Pfaffians}

\begin{abstract}
Helfgott and Gessel gave the number of domino tilings of an Aztec Rectangle with defects of size one on the boundary of one side. In this paper we extend this to the case  
of domino tilings of an Aztec Rectangle with defects on all boundary sides.
\end{abstract}

\maketitle

\section{Introduction}

Elkies, Kuperberg, Larsen and Propp in their paper \cite{diamond} introduced a new class of object which they called Aztec Diamonds. The Aztec Diamond of order $n$ (denoted by $\ad(n)$) is the union 
of all unit squares inside the contour $\abs{x}+\abs{y}=n+1$ (see Figure \ref{fig:diamond} for an Aztec Diamond of order $3$). A domino is the union of any two unit squares sharing an edge, and a domino tiling of a region is a covering of the region by dominoes so that there are no gaps or overlaps. The authors in \cite{diamond} and \cite{diamond2}
considered the problem of counting the number of domino tiling the Aztec Diamond with dominoes and presented four different proofs of the following result.

\begin{figure}[!htb]
\centering
\includegraphics[scale=.7]{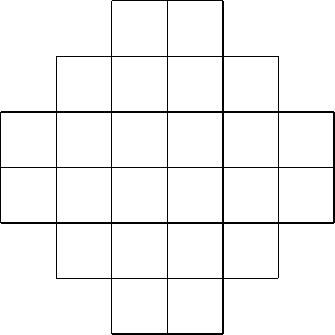}
\caption{$\ad(3)$, Aztec Diamond of order $3$}
\label{fig:diamond}
\end{figure}

\begin{theorem}[Elkies--Kuperberg--Larsen--Propp, \cite{diamond, diamond2}]\label{adm}

The number of domino tilings of an Aztec Diamond of order $n$ is $2^{n(n+1)/2}$.

\end{theorem}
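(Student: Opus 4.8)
The plan is to prove the formula by induction on $n$, reducing it to a multiplicative recurrence for the sequence $a_n$ of tiling counts of $\ad(n)$. The base case is immediate: $a_0=1$, and $a_1=2$ records the two tilings of the order-$1$ diamond. Hence the whole content lies in establishing a step relation that multiplies the count by the correct power of two. Concretely, since $2^{(n+1)(n+2)/2}/2^{n(n+1)/2}=2^{\,n+1}$, it suffices to prove
\[
a_{n+1} = 2^{\,n+1}\,a_n .
\]

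First I would set up the \emph{domino-shuffling} bijection of Elkies--Kuperberg--Larsen--Propp, which transforms a tiling of $\ad(n)$ into a tiling of $\ad(n+1)$ in three stages: \emph{destruction}, in which every $2\times2$ block carrying two parallel dominoes is emptied; \emph{sliding}, in which each surviving domino moves one step in the direction it points; and \emph{creation}, in which every resulting empty $2\times2$ block is filled, each such block admitting exactly two fillings (two horizontal or two vertical dominoes). Running this process over all tilings of $\ad(n)$, with all choices of creation fillings, produces each tiling of $\ad(n+1)$ exactly once. The main obstacle is the weight bookkeeping: the number of empty blocks created is \emph{not} constant across tilings, so the clean factor $2^{\,n+1}$ does not fall out block-by-block but only in the aggregate. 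The cleanest way to control this is to carry an edge-weighted partition function through the three stages and check that destruction-plus-sliding preserves it while creation multiplies it by a uniform factor; specializing the weights then yields the recurrence above.

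A second route, better aligned with the condensation methods used later in this paper, is to apply Kuo's graphical condensation to the planar bipartite graph $\ad(n+1)$ whose perfect matchings are exactly the tilings. Taking the four distinguished vertices to be the corner cells and removing the forced dominoes, the four subgraphs appearing in the condensation identity can be identified with copies of $\ad(n)$ and $\ad(n-1)$, giving
\[
\m(\ad(n+1))\,\m(\ad(n-1)) = 2\,\m(\ad(n))^{2}.
\]
Here the two summands on the right each contribute $\m(\ad(n))^{2}$, which is the source of the factor $2$. A short induction using $a_n=2^{n(n+1)/2}$ and $a_{n-1}=2^{(n-1)n/2}$ then recovers $a_{n+1}=2^{(n+1)(n+2)/2}$. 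In this approach the technical heart is the correct identification of the four condensation subgraphs with smaller Aztec diamonds after accounting for the forced edges along the trimmed boundary; either recurrence closes the induction and proves the theorem.
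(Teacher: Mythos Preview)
The paper does not supply its own proof of this statement: Theorem~\ref{adm} is quoted from \cite{diamond, diamond2} as a known result and used as an input throughout, so there is nothing to compare your argument against directly.

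That said, both routes you sketch are correct and standard. Your first route (domino shuffling) is precisely one of the four original proofs in the cited references; your caveat that the factor $2^{n+1}$ emerges only after weighted bookkeeping rather than block-by-block is accurate. Your second route via Kuo condensation, yielding $\m(\ad(n+1))\,\m(\ad(n-1)) = 2\,\m(\ad(n))^{2}$, is exactly the recurrence Kuo derives in \cite{kuo} and is the approach most in keeping with the present paper, which leans on Theorems~\ref{kk1} and~\ref{kk} throughout Section~\ref{s3}. If you were to include a proof here, the condensation argument would be the more natural fit stylistically; the identification of the four subgraphs with $\ad(n)$ and $\ad(n-1)$ after peeling forced dominoes along the boundary is routine and of the same flavor as the forcings in Figures~\ref{fig:kuo-1}--\ref{fig:kuo-3}.
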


This work subsequently inspired lot of follow ups, including the natural extension of the Aztec Diamond to the Aztec rectangle (see Figure \ref{fig:check}). We denote by $\ar_{a,b}$ 
the Aztec rectangle which has $a$ unit squares on the southwestern side and $b$ unit squares on the northwestern side. In the remainder of this paper, we assume $b\geq a$ unless 
otherwise mentioned. For $a<b$, $\ar_{a,b}$ does not have any tiling by dominoes. The non-tileability of the region $\ar_{a,b}$ becomes evident if we look at the checkerboard representation of $\ar_{a,b}$ (see Figure \ref{fig:check}). 
However, if we remove $b-a$ unit squares from the southeastern side then we have a simple product formula found by Helfgott and Gessel \cite{gessel}.

\begin{figure}[!htb]
\centering
\includegraphics[scale=.7]{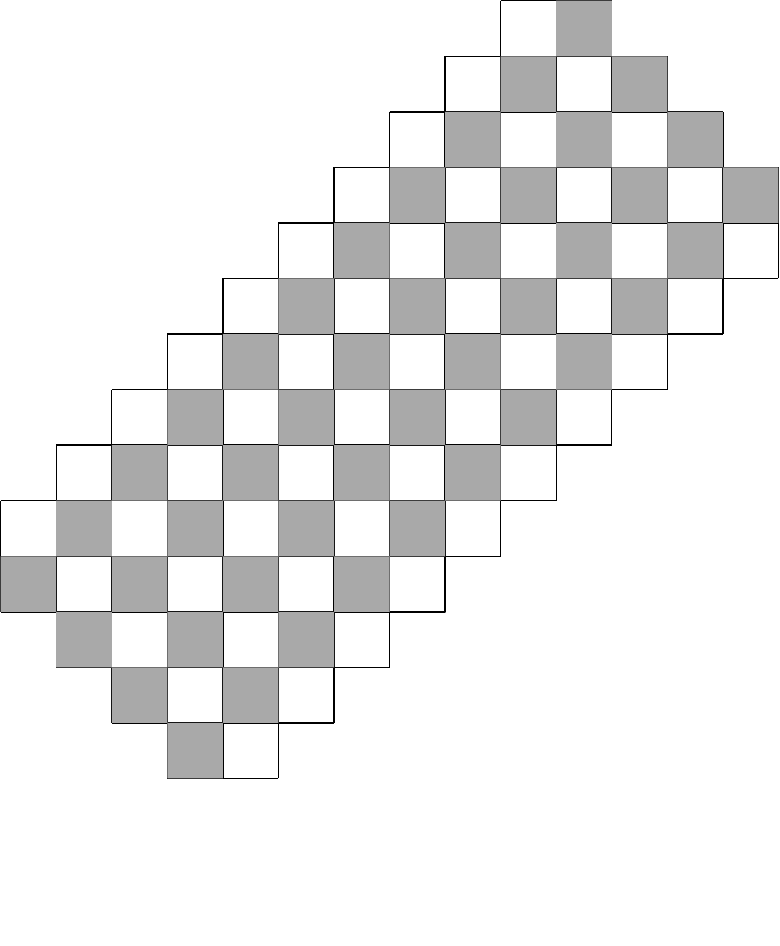}
\caption{Checkerboard representation of an Aztec Rectangle with $a=4, b=10$}
\label{fig:check}
\end{figure}

\begin{theorem}[Helfgott--Gessel, \cite{gessel}]\label{ar}
Let $a<b$ be positive integers and $1\leq s_1<s_2<\cdots <s_a\leq b$. Then the number of domino tilings of $\ar_{a,b}$ where all unit squares from the southeastern side are 
removed except for those in positions $s_1, s_2, \ldots, s_a$ is \[2^{a(a+1)/2}\prod_{1\leq i<j\leq a}\frac{s_j-s_i}{j-i}.\]
\end{theorem}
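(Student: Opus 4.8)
The plan is to prove this enumeration formula by converting the domino tiling problem into a problem of counting perfect matchings of a bipartite graph, and then evaluating that count as a determinant via the Lindström–Gessel–Viennot (LGV) lemma. First I would set up the standard dictionary between domino tilings and dimer coverings: overlay the region $\ar_{a,b}$ with its checkerboard coloring (as in Figure \ref{fig:check}), place a vertex at the center of each unit square, and join two vertices by an edge whenever the corresponding squares are adjacent. A domino tiling is then exactly a perfect matching of the resulting planar bipartite graph $G$. Because all but $a$ of the southeastern squares have been deleted, the black and white color classes have equal size, so perfect matchings can exist; the combinatorial core is to count them.

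\medskip
\noindent The heart of the argument is to recognize the matchings as families of non-intersecting lattice paths. The plan is to show that every perfect matching of $G$ corresponds bijectively to an $a$-tuple of non-intersecting directed paths, where the $i$-th path starts at a fixed source on the northwestern (or northeastern) boundary and ends at the sink determined by the position $s_i$ on the southeastern side. This is the standard ``dominoes to paths'' encoding: forced edges near the undisturbed boundary pin down the endpoints, while the freedom in the interior is captured by lattice steps. Once this bijection is in place, the LGV lemma expresses the number of such non-intersecting path families as the determinant
\begin{equation}
\det_{1\le i,j\le a}\bigl( P(A_i \to B_j) \bigr),
\end{equation}
where $P(A_i\to B_j)$ counts single paths from source $A_i$ to sink $B_j$. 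The geometry of the Aztec rectangle should make each such single-path count a binomial coefficient (the paths are monotone lattice paths in a triangular array, and their number depends only on the displacement $s_j$ together with a power of $2$ coming from the two-way choices along the diagonal edges).

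\medskip
\noindent The final step is to evaluate this determinant in closed form. After factoring out the powers of $2$, the entries should reduce to binomial coefficients of the shape $\binom{s_j + c}{i + d}$ for suitable constants, so the determinant becomes a determinant of binomials in the variables $s_1,\dots,s_a$. Such determinants are evaluated by standard techniques: column operations reduce the entries to polynomials in the $s_j$, and the resulting determinant factors as a Vandermonde-type product. Concretely, I expect the binomial determinant to collapse to $\prod_{1\le i<j\le a}(s_j-s_i)/(j-i)$, and collecting the extracted powers of two to yield the prefactor $2^{a(a+1)/2}$. I would verify the normalization by checking the balanced case $s_i=i$ (or another convenient specialization) against the known Aztec diamond count of Theorem \ref{adm}.

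\medskip
\noindent The main obstacle I anticipate is the bookkeeping in the path encoding: correctly identifying the forced edges along the three undisturbed boundary sides, pinning down the exact coordinates of the sources $A_i$ and sinks $B_j$ as functions of $a$, $b$, and $s_j$, and tracking the precise power of $2$ contributed by each path or by the matching as a whole. Getting these indices exactly right — so that the single-path counts genuinely come out as the binomials whose determinant is the Vandermonde product — is where the real care is needed; the determinant evaluation itself, once the entries are correct, is a routine application of known binomial-determinant identities.
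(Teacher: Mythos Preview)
The paper does not contain a proof of this statement: Theorem~\ref{ar} is quoted from Helfgott and Gessel \cite{gessel} as a known result and is used as an input throughout Section~\ref{s3} (via Corollaries~\ref{cor1} and~\ref{cor2} and directly in the proof of Proposition~\ref{ar_k_i}). So there is no ``paper's own proof'' to compare your proposal against.

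That said, your outline is a correct and standard route to the formula, and it is essentially the approach taken in \cite{gessel}: encode domino tilings as families of non-intersecting lattice paths, apply the Lindstr\"om--Gessel--Viennot lemma to obtain a determinant whose entries are (weighted) binomial coefficients, and evaluate that determinant as a Vandermonde-type product, with the powers of $2$ emerging from the path weights. Your self-identified obstacle is the genuine one: the bijection between tilings and path systems for Aztec regions is not the naive one for simply-connected regions on $\mathbb{Z}^2$, and one must be careful that the single-path counts come out as $2^{i}\binom{s_j-1}{i-1}$ (or an equivalent form), so that the factored determinant is exactly $\det\bigl(\binom{s_j-1}{i-1}\bigr)_{1\le i,j\le a}=\prod_{i<j}(s_j-s_i)/(j-i)$. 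Once that bookkeeping is done your sketch goes through without further ideas.
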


Tri Lai \cite{lai} has recently generalized Theorem \ref{ar} to find a generating function, following the work of Elkies, Kuperberg, Larsen and Propp \cite{diamond, diamond2}. Motivated by the recent work of Ciucu and Fischer \cite{ilse}, here we look at the problem of tiling an Aztec rectangle 
with dominoes if arbitrary unit squares are removed along the boundary of the Aztec rectangle.

This paper is structured as follows: in Section \ref{s2} we state our main results, in Section \ref{cond-sec} we introduce our main tool in the proofs and present a 
slight generalization of it, in Section \ref{s3} we look at tilings of some special cases which are used in our main results. Finally, in Section \ref{s4} we
 prove the results described in Section \ref{s2}. The main ingredients in most of our proofs will be the method of condensation developed by Kuo \cite{kuo} and its subsequent generalization by 
 Ciucu \cite{ciucu}.

\section{Statements of Main Results}\label{s2}

In order to create a region that can be tiled by dominoes we have to remove $k$ more white squares than black squares along the boundary of $\ar_{a,b}$. There are $2b$ white squares and $2a$ black squares on the boundary of $\ar_{a,b}$. We choose 
$n+k$ of the white squares that share an edge with the boundary and denote them by $\al_1, \al_2, \ldots, \al_{n+k}$ (we will refer to them as defects of type $\al$). We choose any 
$n$ squares from the black squares which share an edge with the boundary and denote them by $\be_1, \be_2, \ldots, \be_n$ (we refer to them as defects of type $\be$). We consider 
regions of the type $\ar_{a,b}\setminus \{\al_1, \ldots, \al_{n+k}, \be_1, \ldots, \be_n\}$, which are more general than the type considered in \cite{gessel}.

It is also known that domino tilings of a region can be identified with perfect matchings of its planar dual graph, so for any region $R$ on the square lattice we denote by $\m (R)$ the number of domino tilings 
of $R$. We now state the main results of this paper below. The first result is concerned with the case when the defects are confined to three of the four sides of the Aztec 
rectangle (defects do not occur on one of the sides with shorter length), and provides a Pfaffian expression for the number of tilings of such a region, with each entry in the Pfaffian being given by a simple product or by a sum or product of quotients of factorials and powers of $2$. The second result gives a nested Pfaffian expression for the general case when we do not restrict the occurence of defects 
on any boundary side. The third result deals with the case of an Aztec Diamond with arbitrary defects on the boundary and gives a Pfaffian expression for the number of tilings of such a 
region, with each entry in the Pfaffian being given by a simple sum of quotients of factorials and powers of $2$.

We define the region $\ar_{a,b}^k$ to be the region obtained from $\ar_{a.b}$ by adding a string of $k$ unit squares along the boundary of the southeastern side as shown in Figure 
\ref{fig:mt1}. We denote this string of $k$ unit squares by $\ga_1, \ga_2, \ldots, \ga_k$ and refer to them as defects of type $\ga$.

\begin{figure}[!htb]
\centering
\includegraphics[scale=.7]{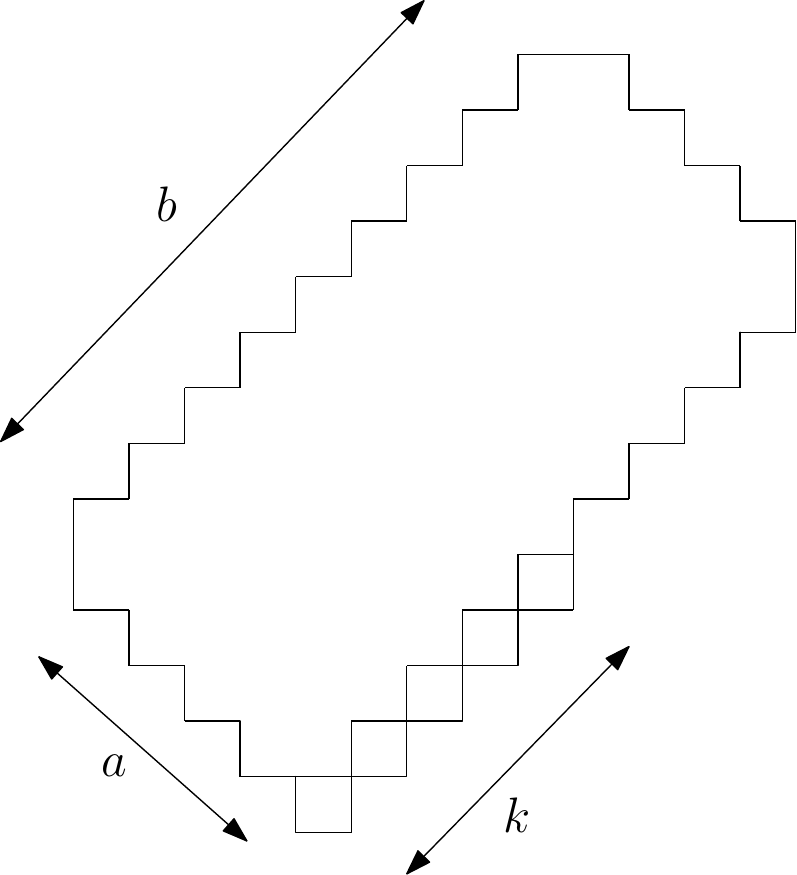}
\caption{$\ar_{a,b}^k$ with $a=4,b=9,k=5$}
\label{fig:mt1}
\end{figure}

\begin{theorem}\label{mt1}
 Assume that one of the two sides on which defects of type $\be$ can occur does not actually have any defects on it. Without loss of generality, we assume this to be the southwestern side. 
 Let $\de_1, \ldots, \de_{2n+2k}$ be the elements of the set $\{\al_1, \ldots, \al_{n+k}\}\cup \{\be_1, \ldots, \be_{n}\}\cup \{\ga_1, \ldots, \ga_{k}\}$ listed in a cyclic order.
 
 Then we have
 \begin{equation}\label{em1}
  \m (\ar_{a,b}\setminus \{\al_1, \ldots, \al_{n+k}, \be_1, \ldots, \be_n\})=\frac{1}{[\m (\ar_{a,b}^k)]^{n-k+1}}\pf[(\m (\ar_{a,b}^k\setminus\{\de_i, \de_j\}))_{1\leq i<j\leq 2n+2k}],
 \end{equation}

 \noindent where all the terms on the right hand side are given by explicit formulas:
 
 \begin{enumerate}
  \item $\m (\ar_{a,b}^k)$ is given by Theorem \ref{adm},
  \item $\m (\ar_{a,b}^k\setminus\{\al_i, \be_j\})$ is given by Proposition \ref{ad_i_j} if $\al_i$ is on the south-eastern side and not above a $\ga$ defect; otherwise it is $0$,
  \item $\m (\ar_{a,b}^k\setminus\{\al_i, \ga_j\})$ is given by Theorem \ref{adm} if $\al_i$ is above a $\ga$ defect; it is given by Proposition \ref{ar_k-1_i} if the $\al$ defect is in the northwestern side at a distance of more than $k-1$ from the western corner; it is given by Propositions \ref{ar_k_i} if 
  the $\al$ dent is on the southeastern side; otherwise it is $0$,
  \item $\m (\ar_{a,b}^k\setminus\{\al_i, \al_j\})=\m (\ar_{a,b}^k\setminus\{\be_i, \be_j\})=\m (\ar_{a,b}^k\setminus\{\be_i, \ga_j\})=\m (\ar_{a,b}^k\setminus\{\ga_i, \ga_j\})=0$.
 \end{enumerate}

\end{theorem}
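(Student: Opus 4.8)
The plan is to express the entire left-hand side as a single matching number of the enlarged region $\arr$ and then to read it off from a Pfaffian via Ciucu's condensation. The reduction that makes a one-shot Pfaffian possible is that the cells $\ga_1,\dots,\ga_k$ are exactly the string appended to $\ar_{a,b}$ to form $\arr$, so deleting them returns the original rectangle, $\ar_{a,b}=\arr\setminus\{\ga_1,\dots,\ga_k\}$. Since $\{\de_1,\dots,\de_{2n+2k}\}=\{\al_1,\dots,\al_{n+k}\}\cup\{\be_1,\dots,\be_n\}\cup\{\ga_1,\dots,\ga_k\}$, it follows that
\begin{equation*}
\m(\ar_{a,b}\setminus\{\al_1,\dots,\al_{n+k},\be_1,\dots,\be_n\})=\m\big(\arr\setminus\{\de_1,\dots,\de_{2n+2k}\}\big),
\end{equation*}
so the target is the matching number of the \emph{single} region $\arr$ with all $2n+2k$ marked cells deleted.

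I would then pass to perfect matchings of the planar dual graph $G$ of $\arr$. Every defect lies on the boundary of $\arr$ (the $\ga$'s along the southeastern side), so all $2n+2k$ corresponding vertices sit on the outer face of $G$ in the stated cyclic order $\de_1,\dots,\de_{2n+2k}$. Applying the Pfaffian generalization of Kuo condensation due to Ciucu, in the slightly extended form of Section \ref{cond-sec}, to $G$ with these cyclically ordered boundary vertices, and checking its hypotheses (planarity and bipartiteness of $G$, and the marked vertices lying on a common face in the given cyclic order), would produce \eqref{em1} directly. Here the condensation theorem dictates both the power of $\m(\arr)$ and the fact that the bare matching numbers $\m(\arr\setminus\{\de_i,\de_j\})$ are the correct matrix entries with no extra sign factors; confirming that the cyclic ordering is what achieves this, and that the exponent reduces to $n-k+1$ once the forced behaviour of the $\ga$-cells is folded in, is the delicate bookkeeping.

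It remains to identify the matrix entries, which is where the assumption that the southwestern side carries no $\be$-defect enters. The four families in item (4) vanish for one uniform reason: $\arr$ is balanced and $\al$ is white while $\be,\ga$ are black, so deleting two cells of the same colour leaves unequal colour classes and hence no tiling. For the surviving mixed pairs I would exploit that each $\ga_j$ has a single neighbour, the cell immediately above it, so the vertical domino covering them is forced in every tiling of $\arr$. This has three consequences: (i) when $\al_i$ is the cell directly above $\ga_j$ the pair is a forced domino, whence $\m(\arr\setminus\{\al_i,\ga_j\})=\m(\arr)$ by Theorem \ref{adm}; (ii) the forced vertical dominoes partition off sub-regions, so if $\al_i$ does not lie in the sub-region carrying the excess white cell the remainder again has unequal colour classes and the entry vanishes, which accounts for the positional ``otherwise $0$'' clauses in items (2) and (3); (iii) in the remaining cases the forced edges peel $G$ down to a smaller Aztec rectangle whose matching number is supplied by Proposition \ref{ad_i_j}, \ref{ar_k_i} or \ref{ar_k-1_i}, according to whether $\al_i$ lies on the southeastern side or far enough along the northwestern side.

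I expect the principal obstacle to be twofold. The first is the exact determination of the exponent of $\m(\arr)$ and the global sign in the condensation step: the $\ga$-defects do not behave like generic boundary vertices, and it is precisely this that forces the ``slight generalization'' of Section \ref{cond-sec}. The second is the positional case analysis in the entry evaluation, where one must match every nonzero entry to the correct one of the cited propositions and show all others vanish; the hypothesis that the southwestern side is defect-free is what keeps this analysis finite and each surviving region of a previously computed type.
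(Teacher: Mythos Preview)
Your plan coincides with the paper's proof: rewrite the left-hand side as $\m(\arr\setminus\{\de_1,\dots,\de_{2n+2k}\})$, apply Theorem \ref{condensation} to the planar dual of $\arr$ with those $2(n+k)$ boundary vertices, and then evaluate each Pfaffian entry by following the dominoes forced by the pendant $\ga$-cells together with the checkerboard colour count. Two small corrections: the paper uses only the original Theorem \ref{condensation} (the extension of Section \ref{cond-sec} is unnecessary here, since every $\de_i$ is \emph{deleted} from $G$), and that theorem with $2(n+k)$ marked vertices yields the exponent $(n+k)-1$ directly; the paper performs no further reduction, so the printed exponent $n-k+1$ is a typographical slip and the ``delicate bookkeeping'' you anticipate at that step does not arise.
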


\begin{theorem}\label{mt2}
 
 Let $\al_1, \ldots, \al_{n+k}$ be arbitrary defects of type $\al$ and $\be_1, \ldots, \be_n$ be arbitrary defects of type $\be$ along the boundary of $\ar_{a,b}$. Then $\m ({\ar_{a,b}}\setminus\{\al_1, \ldots, \al_{n+k}, \be_1, \ldots, \be_n\})$ is equal to the Pfaffian of a $2n\times 2n$ 
 matrix whose entries are Pfaffians of $(2k+2)\times (2k+2)$ matrices of the type in the statement of Theorem \ref{mt1}.
 
\end{theorem}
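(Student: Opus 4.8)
The plan is to produce the nested Pfaffian by applying the generalized condensation of Section \ref{cond-sec} twice and using Theorem \ref{mt1} to evaluate the inner layer. The geometric input is that every boundary defect lies on the outer face of the planar dual graph of $\ar_{a,b}^k$, so the hypotheses of the condensation theorem can be arranged for suitable marked sets. The reason Theorem \ref{mt1} does not apply directly is that, with $\be$-defects on both admissible sides, the pairwise quantities appearing in its single Pfaffian are no longer all covered by Propositions \ref{ad_i_j}, \ref{ar_k-1_i}, \ref{ar_k_i}; the remedy is to peel off one $\be$ at a time by an outer condensation.

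First, for the \emph{outer} layer, I would augment $\ar_{a,b}$ by the string $\ga_1,\dots,\ga_k$ to form $\ar_{a,b}^k$ and record that
$$\m(\ar_{a,b}\setminus\{\al_1,\dots,\al_{n+k},\be_1,\dots,\be_n\})=\m(\ar_{a,b}^k\setminus\{\al_1,\dots,\al_{n+k},\be_1,\dots,\be_n,\ga_1,\dots,\ga_k\}).$$
I would then select $k$ of the $\al$-defects and, together with all of $\ga_1,\dots,\ga_k$, fold them into a base region $G^*:=\ar_{a,b}^k\setminus(\{k\text{ chosen }\al\}\cup\{\ga_1,\dots,\ga_k\})$, chosen so that $\m(G^*)\neq 0$. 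The quantity sought is then $\m(G^*\setminus T)$, where $T$ collects the remaining $n$ defects of type $\al$ and the $n$ defects of type $\be$, so $\abs{T}=2n$. Applying the generalized condensation to $G^*$ with marked set $T$ gives
$$\m(G^*\setminus T)\cdot\m(G^*)^{\,n-1}=\pf\bigl[\bigl(\m(G^*\setminus\{t_i,t_j\})\bigr)_{1\le i<j\le 2n}\bigr],$$
a $2n\times 2n$ Pfaffian. Same-class pairs contribute $0$, so every nonzero entry removes exactly one $\al$ and one $\be$.

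Second, for the \emph{inner} layer, each nonzero entry equals $\m(\ar_{a,b}^k\setminus\{(k+1)\al,\be_j,\ga_1,\dots,\ga_k\})$, that is, a region of exactly the type treated in Theorem \ref{mt1} with a single defect of type $\be$. Since only one $\be$ survives, it automatically lies on only one of the two $\be$-admissible sides, so the standing hypothesis of Theorem \ref{mt1} is satisfied (after a reflection if that $\be$ sits on the southwestern side). Hence each entry is given by the $(2n'+2k)\times(2n'+2k)=(2k+2)\times(2k+2)$ Pfaffian of Theorem \ref{mt1} with $n'=1$, whose own entries are the explicit quantities listed in items (1)--(4). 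Substituting these into the outer Pfaffian yields the asserted nested expression, with the normalization absorbed into $\m(G^*)^{\,n-1}$.

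The hard part will be verifying that the chosen marked set $T$ meets the hypotheses of the generalized condensation, namely that its members lie on a common face of $G^*$ in the required cyclic order, and that the folded base $G^*$ is tileable so that the division by $\m(G^*)^{\,n-1}$ is legitimate; this is precisely where the freedom in choosing which $k$ defects of type $\al$ to absorb into $G^*$ is used. A secondary point, which I expect to be routine once the arrangement is fixed, is to confirm that every residual one-$\be$ region genuinely falls within the case analysis of Theorem \ref{mt1} irrespective of the side on which the surviving $\be$ lies, and to track the sign conventions so that both the outer $2n\times 2n$ and the inner $(2k+2)\times(2k+2)$ arrays are honest Pfaffians.
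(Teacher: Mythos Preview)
Your approach is essentially the same as the paper's: form a base region by stripping off $k$ of the $\al$-defects, apply Ciucu's condensation (Theorem~\ref{condensation}) with the remaining $n$ $\al$'s and all $n$ $\be$'s as the $2n$ marked boundary vertices, and then observe that each nonzero entry of the resulting $2n\times 2n$ Pfaffian carries only a single $\be$-defect, so Theorem~\ref{mt1} applies with $n'=1$ to yield the inner $(2k+2)\times(2k+2)$ Pfaffians. The paper's write-up is terser and does not explicitly flag the tileability of the base region or the reflection when the lone $\be$ sits on the southwestern side, but structurally the two arguments coincide.
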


In the special case when the number of defects of both types are the same, that is when $k=0$ we get an Aztec Diamond with arbitrary defects on the boundary and the number of tilings can be given by a Pfaffian 
where the entries of the Pfaffian are explicit, as stated in the theorem below.

\begin{theorem}\label{mt3}
 Let $\al_1, \ldots, \al_{n}$ be arbitrary defects of type $\al$ and $\be_1, \ldots, \be_n$ be arbitrary defects of type $\be$ along the boundary of $\ad(a)$, and let $\de_1, \ldots, \de_{2n}$ 
 be a cyclic listing of the elements of the set $\{\al_1, \ldots, \al_n\}\cup \{\be_1, \ldots, \be_n\}$. Then
 
 \begin{equation}\label{emt3}
  \m (\ad(a)\setminus\{\al_1, \ldots, \al_n, \be_1, \ldots, \be_n\})=\frac{1}{[\m (\ad(a))]^{n-1}}\pf [(\m (\ad(a)\setminus\{\de_i, \de_j\}))_{1\leq i<j\leq 2n}],
 \end{equation}

 \noindent where the values of $\m (\ad(a)\setminus\{\de_i, \de_j\}))$ are given explicitly as follows:
 
 \begin{enumerate}
  \item $\m (\ad(a)\setminus\{\al_i, \be_j\}))$ is given by Proposition \ref{ad_i_j},
  \item $\m (\ad(a)\setminus\{\al_i, \al_j\}))=\m (\ad(a)\setminus\{\be_i, \be_j\}))=0$.
 \end{enumerate}

\end{theorem}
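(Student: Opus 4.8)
The plan is to obtain (\ref{emt3}) directly from the Pfaffian form of graphical condensation of Ciucu \cite{ciucu} (the generalization of Kuo's method \cite{kuo}), in the slightly generalized form recorded in Section \ref{cond-sec}. First I would pass from regions to graphs: a domino tiling of $\ad(a)$ corresponds to a perfect matching of the planar dual graph $G$, a finite bipartite plane graph whose two colour classes have equal size because $\ad(a)=\ar_{a,a}$ is balanced and tileable (Theorem \ref{adm}). Each boundary unit square that may be removed is a vertex of $G$ lying on the outer face; the $\al$ defects are vertices of one colour class and the $\be$ defects vertices of the other. Thus deleting a set of defects corresponds exactly to deleting the associated boundary vertices of $G$, and each tiling count $\m(\cdot)$ becomes a count of perfect matchings.

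The key step is that, since every defect is a boundary square, the $2n$ vertices $\de_1,\ldots,\de_{2n}$ all lie on a single face of $G$, namely the outer face, and the prescribed cyclic listing is precisely the order in which they are met as one travels around that face. This is exactly the hypothesis needed to apply the Pfaffian condensation theorem to $G$ with the marked vertices $\de_1,\ldots,\de_{2n}$, which yields
\[
[\m(\ad(a))]^{\,n-1}\,\m\bigl(\ad(a)\setminus\{\al_1,\ldots,\al_n,\be_1,\ldots,\be_n\}\bigr)=\pf\bigl[(\m(\ad(a)\setminus\{\de_i,\de_j\}))_{1\le i<j\le 2n}\bigr],
\]
which is (\ref{emt3}) after dividing by $[\m(\ad(a))]^{\,n-1}$. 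The exponent $n-1$ is the standard one attached to a $2n\times 2n$ Pfaffian condensation. Because $\ad(a)$ is already balanced, this is the clean $k=0$ situation: no auxiliary string of $\ga$-squares is needed, so the condensation applies with its ordinary exponent rather than the corrected one appearing in Theorem \ref{mt1}.

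It then remains only to read off the entries. If $\de_i$ and $\de_j$ have opposite colours, say $\{\de_i,\de_j\}=\{\al_i,\be_j\}$, then $\m(\ad(a)\setminus\{\al_i,\be_j\})$ is evaluated by Proposition \ref{ad_i_j}, giving case (1). If $\de_i$ and $\de_j$ have the same colour, both of type $\al$ or both of type $\be$, then $\ad(a)$ with these two squares removed has unequal numbers of black and white squares, hence admits no domino tiling, so the entry vanishes, giving case (2). Substituting these values into the Pfaffian completes the proof.

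I expect the main obstacle to be not the algebra but the careful verification of the hypotheses and sign conventions of the condensation theorem: one must check that the boundary squares genuinely occur on one face in the stated cyclic order, and that the intrinsic signs in the Pfaffian expansion combine with the (non-negative) matching numbers to reproduce the matching count on the left with no spurious global sign, for an \emph{arbitrary} colour pattern around the face and not merely an alternating one. A useful consistency check is to specialize to small $n$: the case $n=1$ is the tautology $\m(\ad(a)\setminus\{\al_1,\be_1\})=\m(\ad(a)\setminus\{\al_1,\be_1\})$, while $n=2$ reduces to a four-vertex condensation identity in which every product containing a same-colour deletion is killed by case (2), recovering a Kuo-type relation.
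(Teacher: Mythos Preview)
Your proposal is correct and follows essentially the same route as the paper: apply Ciucu's Pfaffian condensation (Theorem~\ref{condensation}) to the planar dual graph of $\ad(a)$ with the $2n$ boundary defects as the marked vertices on the outer face, then identify the entries via Proposition~\ref{ad_i_j} for mixed pairs and the checkerboard parity obstruction for same-colour pairs. The paper's proof is terser but identical in substance; your additional remarks about the outer-face hypothesis, the exponent $n-1$, and the small-$n$ sanity checks are reasonable elaborations rather than a different argument.
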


\section{A result on Graphical Condensation}\label{cond-sec}

The proofs of our main results are based on Ciucu's generalization \cite{ciucu} of Kuo's graphical condensation \cite{kuo} which we state below. The aim of this section is also to present 
our small generalization of Ciucu's result.

Let $G$ be a weighted graph, where the weights are associated with each edge of $G$, and let $\m (G)$ denote the sum of the weights of the perfect matchings of $G$, where the weight of a perfect matching is taken to be the product 
of the weights of its constituent edges. We are interested in graphs with edge weights all equaling $1$, which corresponds to tilings of the region in our special case.

\begin{theorem}[Ciucu, \cite{ciucu}]\label{condensation}
Let $G$ be a planar graph with the vertices $a_1, a_2, \ldots, a_{2k}$ appearing in that cyclic order on a face of $G$. Consider the skew-symmetric matrix $A=(a_{ij})_{1\leq i,j\leq 2k}$ with entries given by 

\begin{equation}\label{ciucu1}
a_{ij} :=  \m (G\setminus \{a_i, a_j\}), \text{if } i<j.
\end{equation}

Then we have that
\begin{equation}\label{ciucu2}
\m (G\setminus \{a_1, a_2, \ldots, a_{2k}\})=\frac{\pf(A)}{[\m (G)]^{k-1}}.
\end{equation}
\end{theorem}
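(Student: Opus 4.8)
The plan is to prove the identity by induction on $k$, using Kuo's original four-vertex condensation theorem as the engine and the Pfaffian analogue of the Desnanot--Jacobi (Dodgson) identity as the bookkeeping device. The base case $k=1$ is immediate: then $A$ is the $2\times 2$ matrix whose single independent entry is $a_{12}=\m(G\setminus\{a_1,a_2\})$, so $\pf(A)=\m(G\setminus\{a_1,a_2\})$ and the asserted formula reads $\m(G\setminus\{a_1,a_2\})=\pf(A)/[\m(G)]^0$. The case $k=2$ is exactly Kuo's condensation: expanding the $4\times 4$ Pfaffian gives $\pf(A)=a_{12}a_{34}-a_{13}a_{24}+a_{14}a_{23}$, and Kuo's theorem states precisely that this equals $\m(G)\cdot\m(G\setminus\{a_1,a_2,a_3,a_4\})$. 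These two cases anchor the strong induction.

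For the inductive step ($k\geq 3$) I would single out the four outermost indices $1,2,2k-1,2k$ and apply the Pfaffian minor identity
\[
\pf(A)\,\pf(A_{\widehat{1,2,2k-1,2k}})=\pf(A_{\widehat{1,2}})\,\pf(A_{\widehat{2k-1,2k}})-\pf(A_{\widehat{1,2k-1}})\,\pf(A_{\widehat{2,2k}})+\pf(A_{\widehat{1,2k}})\,\pf(A_{\widehat{2,2k-1}}),
\]
where $A_{\widehat{S}}$ denotes the principal submatrix of $A$ obtained by deleting the rows and columns indexed by $S$. The crucial observation is that each such submatrix is itself of the exact shape appearing in the theorem, but for the graph $G$ read off against the smaller distinguished cycle $\{a_i : i\notin S\}$; these surviving vertices still lie in cyclic order on a face of $G$, so the configuration remains admissible. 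Consequently the induction hypothesis applies to every factor: each of the six sub-Pfaffians deleting two indices is a $2(k-1)\times 2(k-1)$ Pfaffian contributing a factor $[\m(G)]^{k-2}$, while $\pf(A_{\widehat{1,2,2k-1,2k}})$ is a $2(k-2)\times 2(k-2)$ Pfaffian contributing $[\m(G)]^{k-3}$.

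After substituting these expressions, the left-hand side becomes $\pf(A)\,[\m(G)]^{k-3}\,\m(G\setminus\{a_3,\ldots,a_{2k-2}\})$, and the right-hand side factors as $[\m(G)]^{2(k-2)}$ times an alternating sum of three products of matching numbers. Setting $G'=G\setminus\{a_3,\ldots,a_{2k-2}\}$, I would then recognize that alternating sum as exactly the right-hand side of Kuo's condensation theorem applied to $G'$ with the four distinguished vertices $a_1,a_2,a_{2k-1},a_{2k}$ (a direct matching of the seven deleted-vertex sets confirms this); by Kuo's theorem it equals $\m(G')\cdot\m(G'\setminus\{a_1,a_2,a_{2k-1},a_{2k}\})=\m(G\setminus\{a_3,\ldots,a_{2k-2}\})\cdot\m(G\setminus\{a_1,\ldots,a_{2k}\})$. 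Cancelling the common factor $\m(G\setminus\{a_3,\ldots,a_{2k-2}\})$ and comparing the powers of $\m(G)$, namely $2(k-2)-(k-3)=k-1$, yields $\pf(A)=[\m(G)]^{k-1}\,\m(G\setminus\{a_1,\ldots,a_{2k}\})$, which is the claim.

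The main obstacle is precisely this cancellation: the argument divides by $\m(G\setminus\{a_3,\ldots,a_{2k-2}\})$ and by powers of $\m(G)$, each of which may vanish for the unweighted graph. I would dispose of this by the standard genericity device: replace the all-ones edge weights by independent indeterminates, so that every matching generating function occurring in the argument is a nonzero polynomial and all divisions are legitimate; since the target identity is then an equality of polynomials in these indeterminates, it persists under the specialization of all weights to $1$. A secondary point requiring attention is the verification that, after each deletion of a two- or four-element index set, the remaining distinguished vertices genuinely retain the cyclic order needed to invoke both the induction hypothesis and Kuo's theorem, so that every application is to an admissible planar configuration.
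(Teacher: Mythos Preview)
Your argument is correct but follows a genuinely different route from the paper's. The paper (which actually proves the slightly more general Theorem~\ref{condensation-2} and deduces Theorem~\ref{condensation} as a special case) proceeds by first establishing a $2k$-term combinatorial identity, Proposition~\ref{ck3}, via a direct bijection on superimposed matchings, and then combines this with the \emph{linear} Laplace-type cofactor expansion $\pf(A)=\sum_{i=2}^{2k}(-1)^i a_{1i}\pf(A_{1i})$ and induction. Your approach instead uses only the four-vertex base case together with the \emph{bilinear} Pfaffian Desnanot--Jacobi identity and strong induction. What your route buys is economy of combinatorial input: you never need the full $2k$-term bijective Proposition~\ref{ck3}, only its $k=2$ instance (your ``Kuo condensation''). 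What the paper's route buys is that it is division-free---the identity $\pf(A)=[\m(G)]^{k-1}\m(G\setminus\{a_1,\dots,a_{2k}\})$ is assembled multiplicatively, with no need to cancel $\m(G')$ or to pass through generic edge weights---and it yields Proposition~\ref{ck3} as a result of independent interest. One small caveat: your $k=2$ base case is not literally Kuo's theorem (which assumes bipartiteness and has no minus sign), but rather the $k=2$ case of Ciucu's Proposition~\ref{ck3}; this still requires the superimposed-matchings argument, so you are not entirely avoiding that technique, only its $2k$-term version.
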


Although Theorem \ref{condensation} is enough for our purposes, we state and prove a slightly more general version of the theorem below. It turns out that our result is a common generalization for 
the condensation results in \cite{kuo} as well as Theorem \ref{condensation} which follows immediately from Theorem \ref{condensation-2} below if we consider $a_1, \ldots, a_{2k}\in \vv(G)$. We also mention 
that Corollary \ref{cond-cor} of Theorem \ref{condensation-2}, does not follow from Theorem \ref{condensation}.

To state and prove our result, we will need to make some notations and concepts clear. We consider the symmetric difference on the vertices and edges of a graph. Let $H$ be a planar graph and $G$ be an 
induced subgraph of $H$ and let $W\subseteq \vv(H)$. Then we define $G+W$ as follows: $G+W$ is the induced subgraph of $H$ with vertex set $\vv(G+W)=\vv(G)\Delta W$, where $\Delta$ 
denotes the symmetric difference of sets. Now we are in a position to state our result below.

\begin{theorem}\label{condensation-2}
Let $H$ be a planar graph and let $G$ be an induced subgraph of $H$ with the vertices $a_1, a_2, \ldots, a_{2k}$ appearing in that cyclic order on a face of $H$. Consider the skew-symmetric matrix $A=(a_{ij})_{1\leq i,j\leq 2k}$ with entries given by

\begin{equation}\label{ciucu1-2}
a_{ij} :=  \m (G+ \{a_i, a_j\}), \text{if } i<j.
\end{equation}

Then we have that
\begin{equation}\label{cond4}
\m (G+ \{a_1, a_2, \ldots, a_{2k}\})=\frac{\pf(A)}{[\m (G)]^{k-1}}.
\end{equation}
\end{theorem}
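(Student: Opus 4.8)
The plan is to deduce Theorem \ref{condensation-2} from Ciucu's Theorem \ref{condensation} by an auxiliary-graph construction, so that the only real work is to encode the symmetric-difference operation $G+W$ as ordinary vertex deletion in a larger planar graph. First I would split the marked set $S=\{a_1,\ldots,a_{2k}\}$ into $S_{\mathrm{in}}=S\cap \vv(G)$ and $S_{\mathrm{out}}=S\setminus \vv(G)$. The guiding principle is that deleting a vertex of $S_{\mathrm{in}}$ from $G$ already realizes the symmetric difference (it removes that vertex), whereas for a vertex of $S_{\mathrm{out}}$ I must arrange matters so that a deletion instead forces that vertex to be matched into $G$, i.e.\ adds it.

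To achieve this I would define $\Gamma$ to be the subgraph of $H$ induced on $\vv(G)\cup S_{\mathrm{out}}$, and then form $\Gamma^*$ by attaching to each $a_i\in S_{\mathrm{out}}$ a new pendant leaf $c_i$ joined to $a_i$ by a single edge of weight $1$; for $a_i\in S_{\mathrm{in}}$ I simply set $c_i:=a_i$. In $\Gamma^*$ every pendant edge $a_ic_i$ is forced into each perfect matching, so it uses up $a_i$ and leaves exactly $G$ to be matched, giving $\m(\Gamma^*)=\m(G)$. Deleting the leaf $c_i$ frees $a_i$, which must then be matched to its neighbours in $G$ --- precisely the effect of adding $a_i$ to the region --- while deleting $c_i=a_i$ for $a_i\in S_{\mathrm{in}}$ removes it. Checking the three cases ($a_i,a_j$ both in $S_{\mathrm{in}}$, both in $S_{\mathrm{out}}$, or mixed) then shows
\begin{equation*}
\m(\Gamma^*)=\m(G),\quad \m(\Gamma^*\setminus\{c_i,c_j\})=\m(G+\{a_i,a_j\}),\quad \m(\Gamma^*\setminus\{c_1,\ldots,c_{2k}\})=\m(G+S),
\end{equation*}
since in each case the vertices surviving in the matching are exactly those of $\vv(G)\,\Delta\,W$ for the appropriate $W$.

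The step I expect to require the most care is verifying the hypotheses of Theorem \ref{condensation} for $\Gamma^*$, namely that $\Gamma^*$ is planar and that $c_1,\ldots,c_{2k}$ lie in the prescribed cyclic order on a single face. Here I would use that the $a_i$ occur in cyclic order on a face $F$ of $H$; passing to the induced subgraph $\Gamma$ only enlarges faces, so the $a_i$ still bound a common face of $\Gamma$ in the same cyclic order. Each pendant edge $a_ic_i$ can be drawn inside $F$ near $a_i$, and because a pendant edge drawn into a face does not separate it, $F$ remains a single face of $\Gamma^*$ whose boundary meets all of $c_1,\ldots,c_{2k}$ in the original cyclic order. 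Making this topological bookkeeping rigorous is the genuine obstacle; the matching correspondences above are routine once the construction is in place.

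With these points established, applying Theorem \ref{condensation} to $\Gamma^*$ with marked vertices $c_1,\ldots,c_{2k}$ and the matrix $A'=(\m(\Gamma^*\setminus\{c_i,c_j\}))_{1\leq i<j\leq 2k}$ yields $\m(\Gamma^*\setminus\{c_1,\ldots,c_{2k}\})=\pf(A')/[\m(\Gamma^*)]^{k-1}$; substituting the three identities above turns this into \eqref{cond4}. The special case $S_{\mathrm{out}}=\emptyset$ recovers Theorem \ref{condensation}, and $S_{\mathrm{in}}=\emptyset$ recovers the purely additive form, so the construction visibly realizes the promised common generalization.
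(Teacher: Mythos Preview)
Your reduction is correct and takes a genuinely different route from the paper. The paper proceeds exactly as Ciucu does for Theorem \ref{condensation}: it first establishes an analogue of Ciucu's superposition identity (Proposition \ref{ck3}) in which every occurrence of $G\setminus\{a_i,a_j\}$ is replaced by $G+\{a_i,a_j\}$, via the same solid/dotted overlay bijection on $\mu\cup\nu$, and then deduces \eqref{cond4} by induction on $k$ using the cofactor expansion of the Pfaffian along the first row. In other words, the paper re-runs Ciucu's entire argument with the symmetric-difference operation carried through.

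Your approach instead shows that Theorem \ref{condensation-2} is a \emph{formal consequence} of Theorem \ref{condensation}, via the standard pendant-vertex trick, with no new bijection required. This is shorter and more conceptual; it also quietly undercuts the paper's remark, just before the statement, that Corollary \ref{cond-cor} ``does not follow from Theorem \ref{condensation}'': your construction shows that it does, once one allows the device of attaching forced leaves to encode vertex addition as deletion. What the paper's route buys is self-containment (Theorem \ref{condensation} is not invoked as a black box) and a demonstration that the overlay bijection itself generalizes verbatim, which could matter for weighted or signed refinements. The one place your write-up should be tightened is exactly the spot you flag: the verification that $c_1,\ldots,c_{2k}$ lie in the given cyclic order on a single face of $\Gamma^*$. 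Your sketch (passing to an induced subgraph only merges faces, so the $a_i$ still border a common face $F'$ of $\Gamma$ in the same cyclic order; a pendant drawn into $F'$ does not disconnect it and inserts $c_i$ at the position of $a_i$ on the boundary walk) is the right argument, but in a final version it deserves one or two more sentences of precision.
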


\begin{corollary}\cite[Theorem 2.4]{kuo}\label{cond-cor}
 Let $G=(V_1, V_2, E)$ be a bipartite planar graph with $\abs{V_1}=\abs{V_2}+1$; and let $w, x, y$ and $z$ be vertices of $G$ that appear in cyclic order on a face of $G$. If 
 $w, x, y \in V_1$ and $z\in V_2$ then 
 \begin{align}
  \m (G-\{w\})\m (G-\{x,y,z\})+\m (G-\{y\})\m (G-\{w, x,z\})=\m (G-\{x\})\m (G-\{w, y,z\}) & \\ 
  +\m (G-\{z\})\m (G-\{w, x,y\}). & \nonumber
 \end{align}
\end{corollary}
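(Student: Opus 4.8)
The plan is to derive the corollary as the special case $k=2$ of Theorem \ref{condensation-2}, applied with ambient graph $H=G$ (the graph in the statement of the corollary). Since $\abs{V_1}=\abs{V_2}+1$, the graph $G$ is itself unbalanced and admits no perfect matching, so neither Theorem \ref{condensation} nor a direct application of condensation with base $G$ is available; this is precisely where the flexibility of the symmetric-difference operation $G+W$ is needed. First I would choose the base induced subgraph to be $G':=G-\{w\}$. Because $w\in V_1$, deleting it balances the bipartition, so $G'$ is the natural balanced base and $\m(G')\neq 0$ in general. I would then take the four cyclically ordered vertices on a face of $H=G$ to be $(a_1,a_2,a_3,a_4)=(w,x,y,z)$: the hypothesis of the corollary guarantees exactly that $w,x,y,z$ occur in this cyclic order on a face of $G$, which is the hypothesis required of $H$ by Theorem \ref{condensation-2}.

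The key computation is to read off the entries $a_{ij}=\m(G'+\{a_i,a_j\})$ from the rule $\vv(G'+W)=\vv(G')\Delta W$. Since $w\notin\vv(G')$ while $x,y,z\in\vv(G')$, toggling by a pair $\{w,\cdot\}$ reinserts $w$ and deletes the other vertex, whereas toggling by a pair drawn from $\{x,y,z\}$ deletes both. Hence
\begin{align*}
a_{12}&=\m(G-\{x\}), & a_{34}&=\m(G-\{w,y,z\}),\\
a_{13}&=\m(G-\{y\}), & a_{24}&=\m(G-\{w,x,z\}),\\
a_{14}&=\m(G-\{z\}), & a_{23}&=\m(G-\{w,x,y\}),
\end{align*}
while the full toggle gives $G'+\{w,x,y,z\}=G-\{x,y,z\}$ and the base gives $\m(G')=\m(G-\{w\})$.

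Finally I would clear the denominator in \eqref{cond4}: for $k=2$ that identity reads $\m(G'+\{a_1,a_2,a_3,a_4\})\,\m(G')=\pf(A)$, and expanding the $4\times 4$ Pfaffian as $\pf(A)=a_{12}a_{34}-a_{13}a_{24}+a_{14}a_{23}$ yields
\[
\begin{aligned}
\m(G-\{w\})\,\m(G-\{x,y,z\})={}&\m(G-\{x\})\m(G-\{w,y,z\})-\m(G-\{y\})\m(G-\{w,x,z\})\\
&{}+\m(G-\{z\})\m(G-\{w,x,y\}).
\end{aligned}
\]
Moving the middle term on the right to the left reproduces the asserted four-term relation verbatim. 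I expect the only points requiring care to be bookkeeping ones: checking that the symmetric-difference toggles land on exactly the six graphs appearing in Kuo's identity (in particular that the balanced subgraphs are the ones that arise), and tracking the alternating Pfaffian signs so that $\m(G-\{y\})\m(G-\{w,x,z\})$ ends up on the correct side. There is no analytic obstacle; the entire content lies in selecting the unbalanced-to-balanced base $G'=G-\{w\}$ so that the $G+W$ condensation of Theorem \ref{condensation-2} specializes to Kuo's relation.
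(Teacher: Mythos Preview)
Your proposal is correct and is exactly the approach taken in the paper: the paper's one-line proof reads ``Take $n=2$, $a_1=w, a_2=x, a_3=y, a_4=z$ and $G=H\setminus\{a_1\}$ in Theorem \ref{condensation-2},'' which is precisely your choice $H=G$, base subgraph $G'=G-\{w\}$, and $k=2$. Your write-up simply spells out the symmetric-difference bookkeeping and the $4\times4$ Pfaffian expansion that the paper leaves implicit; the only superfluous remark is the aside that $\m(G')\neq 0$ in general, which is not needed once you clear the denominator (equivalently, once you invoke Proposition \ref{ck3} directly for $k=2$).
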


\begin{proof}
 Take $n=2$, $a_1=w, a_2=x, a_3=y, a_4=z$ and $G=H\setminus\{a_1\}$ in Theorem \ref{condensation-2}.
\end{proof}

The proof of Theorem \ref{condensation} follows from the use of some auxillary results. In the vein of those results, we need the following proposition to complete our proof of 
Theorem \ref{condensation-2}.

\begin{proposition}\label{ck3}
Let $H$ be a planar graph and $G$ be an induced subgraph of $H$ with the vertices $a_1, \ldots, a_{2k}$ appearing in that cyclic order among the vertices of some face of $H$. 
   Then
 
 \begin{align}\label{prope1}
  \m(G)\m(G+\{a_1, \ldots, a_{2k}\})+\sum_{l=2}^{k}\m(G+ \{a_1, a_{2l-1}\})\m(G+ \overline{\{a_1, a_{2l-1}\}}) & \nonumber \\
  = \sum_{l=1}^{k}\m(G+ \{a_1, a_{2l}\})\m(G+ \overline{\{a_1, a_{2l}\}}), & 
 \end{align}
 
 \noindent where $\overline{\{a_i, a_j\}}$ stands for the complement of $\{a_i, a_j\}$ in the set $\{a_1, \ldots, a_{2k}\}$.

\end{proposition}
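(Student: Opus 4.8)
The plan is to prove the identity by the superposition-of-matchings technique underlying Kuo's and Ciucu's condensation results, organized so that both sides are counted configuration by configuration. First I would read every product $\m(G+S)\m(G+\overline S)$ appearing in \eqref{prope1} as the number of ordered pairs $(M,N)$ with $M$ a perfect matching of $G+S$ and $N$ a perfect matching of $G+\overline S$; here each $S$ is either the full set $\{a_1,\dots,a_{2k}\}$, the empty set, or a pair $\{a_1,a_j\}$, so that $S$ and $\overline S$ always partition $\{a_1,\dots,a_{2k}\}$. Superimposing $M$ and $N$ on $H$ produces a collection of doubled edges, even cycles, and open paths; since the vertex sets of $G+S$ and $G+\overline S$ differ exactly in $S\,\Delta\,\overline S=\{a_1,\dots,a_{2k}\}$, the superposition has precisely $k$ vertex-disjoint paths whose $2k$ endpoints are $a_1,\dots,a_{2k}$. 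Thus every term of \eqref{prope1} is a count of such \emph{configurations} together with a way of splitting each component back into an $M$-part and an $N$-part.

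The second step is to exploit planarity. Because $a_1,\dots,a_{2k}$ lie in this cyclic order on a single face of $H$, the $k$ paths are pairwise non-crossing, so they induce a non-crossing perfect matching of the $2k$ boundary points. Two standard facts about non-crossing matchings of cyclically ordered points drive the whole identity: the arc cut off by any matched pair contains an even number of the remaining points, so the two indices of a matched pair always have opposite parity, and in particular the partner of $a_1$ carries an even index $a_{2l}$. To recover the pair $(M,N)$ from a configuration I would two-colour the edges of each cycle and each path (two choices per component, the doubled edges being forced); the colouring of the path-endpoints determines which defect set $S$ the resulting matching $M$ realizes. Since the cycle colourings contribute a common factor $2^{\#\mathrm{cycles}}$ to every term, it suffices to fix a configuration $C$ and compare the number of admissible path-colourings landing in an LHS term with the number landing in an RHS term.

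The heart of the argument is this per-configuration comparison. For a fixed $C$ with non-crossing pairing $\pi$, a candidate defect set $S$ — either $\{a_1,\dots,a_{2k}\}$ or a pair $\{a_1,a_j\}$ — is realizable precisely when, on every path, the membership pattern of its two endpoints matches the parity of that path's length (recorded together with the membership of the endpoints in $\vv(G)$). Encoding this by the ``separation vector'' $s(S)$, whose coordinate at a path is $1$ iff $S$ contains exactly one endpoint of that path, one checks that $s(S)=\mathbf 0$ holds exactly for the full set and for the pair $\{a_1,\mathrm{partner}_\pi(a_1)\}$, while every other pair $\{a_1,a_j\}$ yields a weight-two vector supported on the two paths through $a_1$ and $a_j$. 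Matching these patterns against the fixed target imposed by $C$, one finds that exactly one realizable $S$ lies in the LHS family and exactly one in the RHS family (or none on each side), the two always being separated by the opposite index-parities of a non-crossing pair — the odd index feeding an LHS summand $\m(G+\{a_1,a_{2l-1}\})$ and the even one feeding an RHS summand $\m(G+\{a_1,a_{2l}\})$. Summing the equal contributions over all $C$ yields \eqref{prope1}. The step I expect to be the main obstacle is exactly this bookkeeping: tracking the interplay between path-length parity and the $\vv(G)$-membership of the $a_i$, so that the argument is valid for an arbitrary induced subgraph $G$ rather than only when the $a_i$ are all present or all absent, and verifying that the non-crossing constraint forces the clean odd/even split that distinguishes the two sides of the identity.
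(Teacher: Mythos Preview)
Your proposal is correct and uses essentially the same superposition argument as the paper: overlay the two matchings to obtain $k$ non-crossing paths pairing the $a_i$, and exploit that in a non-crossing matching each pair carries one odd and one even index. The only difference is packaging --- the paper presents an explicit bijection by swapping solid/dotted edges along the path through $a_1$ (or $a_{2i+1}$), whereas you fix the superposed configuration and count admissible defect sets $S$ on each side --- and the parity bookkeeping you flag as the main obstacle works exactly as you outline.
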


Our proof follows closely that of the proof of an analogous proposition given by Ciucu \cite{ciucu}.
\begin{proof}
 We recast equation \eqref{prope1} in terms of disjoint unions of cartesian products as follows
 
  \begin{align}\label{prope2}
  \mm(G)\times \mm(G+\{a_1, \ldots, a_{2k}\})\cup \mm(G+ \{a_1, a_{3}\})\times \mm(G+ \overline{\{a_1, a_{3}\}})\cup \ldots & \nonumber \\
  \cup \mm(G+ \{a_1, a_{2k-1}\})\times \mm(G+ \overline{\{a_1, a_{2k-1}\}})& 
 \end{align}
 
 \noindent and
   \begin{align}\label{prope3}
 \mm(G+ \{a_1, a_{2}\})\times \mm(G+ \overline{\{a_1, a_{2}\}})\cup\mm(G+ \{a_1, a_{4}\})\times \mm(G+ \overline{\{a_1, a_{4}\}}) \cup \ldots & \nonumber \\
  \cup \mm(G+ \{a_1, a_{2k}\})\times \mm(G+ \overline{\{a_1, a_{2k}\}})\cup& 
 \end{align}

 \noindent where $\mm(F)$ denotes the set of perfect matchings of the graph $F$. For each element $(\mu, \nu)$ of \eqref{prope2} or \eqref{prope3}, we think of the edges of $\mu$ as being marked by solid lines and 
 that of $\nu$ as being marked by dotted lines, on the same copy of the graph $H$. If there are any edges common to both then we mark them with both solid and dotted lines.
 
 We now define the weight of $(\mu, \nu)$ to be the product of the weight of $\mu$ and the weight of $\nu$. Thus, the total weight of the elements in the set \eqref{prope2} is same as the left 
 hand side of equation \eqref{prope1} and the total weight of the elements in the set \eqref{prope3} equals the right hand side of equation \eqref{prope1}. To prove our result, we 
 have to construct a weight-preserving bijection between the sets \eqref{prope2} and \eqref{prope3}.
 
 Let $(\mu, \nu)$ be an element in \eqref{prope2}. Then we have two possibilities as discussed in the following. If $(\mu, \nu)\in \mm(G)\times \mm(G+\{a_1, \ldots, a_{2k}\})$ we note that when considering the edges of $\mu$ and $\nu$ together on the 
 same copy of $H$, each of the vertices $a_1, \ldots, a_{2k}$ is incident to precisely one edge (either solid or dotted depending on the graph $G$ and the vertices $a_i$'s), while all the other 
 vertices of $H$ are incident to one solid and one dotted edge. Thus $\mu \cup \nu$ is the disjoint union of paths connecting the $a_i$'s to one another in pairs, and cycles covering 
 the remaining vertices of $H$. We now consider the path containing $a_1$ and change a solid edge to a dotted edge and a dotted edge to a solid edge. Let this pair of matchings be $(\mu^\pp, \nu^\pp)$.
 
 The path we have obtained must connect $a_1$ to one of the even-indexed vertices, if it connected $a_1$ to some odd-indexed vertex $a_{2i+1}$ then it would isolate the $2i-1$ vertices $a_2, a_3, 
 \ldots, a_{2i}$ from the other vertices and hence we do not get disjoint paths connecting them. Also, we note that the end edges of this path will be either dotted or solid 
 depending on our graph $G$ and the vertices $a_i$'s. So $(\mu^\pp, \nu^\pp)$ is an element of \eqref{prope3}.
 
 If $(\mu, \nu)\in \mm(G+\{a_1, a_3\})\times \mm(G+ \overline{\{a_1, a_{3}\}})$, then we map it to a pair of matchings $(\mu^\pp, \nu^\pp)$ obtained by reversing the solid and dotted edges 
 along the path in $\mu\cup \nu$ containing $a_3$. With a similar reasoning like above, this path must connect $a_3$ to one of the even-indexed vertices and a similar argument will show 
 that indeed $(\mu^\pp, \nu^\pp)$ is an element of \eqref{prope3}. If $(\mu, \nu)\in \mm(G+\{a_1, a_{2i+1}\})\times \mm(G+ \overline{\{a_1, a_{2i+1}\}})$ with $i>1$, we have the same construction with 
 $a_3$ replaced by $a_{2i+1}$.
 
 The map $(\mu, \nu)\mapsto (\mu^\pp, \nu^\pp)$ is invertible because given an element in $(\mu^\pp, \nu^\pp)$ of \eqref{prope3}, the pair $(\mu, \nu)$ that is mapped to it is obtained by shifting
  along the path in $\mu^\pp \cup \nu^\pp$ that contains the vertex $a_{2i}$, such that $(\mu^\pp, \nu^\pp)\in \mm(G+\{a_1, a_{2i}\})\times \mm(G+\overline{\{a_1, a_{2i}\}})$. The map we 
  have defined is weight-preserving and this proves the proposition.
\end{proof}

Now we can prove Theorem \ref{condensation-2}, which is essentially the same proof as that of Theorem \ref{condensation}, but now uses our more general Proposition \ref{ck3}.

\begin{proof}[Proof of Theorem \ref{condensation-2}]
 We prove the statement by induction on $k$. For $k=1$ it follows from the fact that \[ \pf \left( \begin{array}{cc}
0 & a \\
-a & 0 \end{array} \right) =a.\]

For the induction step, we assume that the statement holds for $k-1$ with $k\geq 2$. Let $A$ be the matrix 

\[\left( \begin{array}{ccccc}
0 & \m(G+\{a_1, a_2\}) & \m(G+\{a_1, a_3\}) & \cdots & \m(G+\{a_1, a_{2k}\}) \\
-\m(G+\{a_1, a_2\}) & 0 & \m(G+\{a_2, a_3\}) & \cdots & \m(G+\{a_2, a_{2k}\})\\
-\m(G+\{a_1, a_3\}) & -\m(G+\{a_2, a_3\}) & 0 & \cdots & \m(G+\{a_3, a_{2k}\}) \\
\vdots & \vdots & \vdots & & \vdots \\
-\m(G+\{a_1, a_{2k}\}) & -\m(G+\{a_2, a_{2k}\}) & -\m(G+\{a_3, a_{2k}\}) & \cdots & 0\end{array} \right).\]

\noindent By a well-known property of Pfaffians, we have
\begin{equation}\label{cond1}
\pf(A)=\sum_{i=2}^{2k}(-1)^i\m(G+\{a_1, a_i\})\pf (A_{1i}).
\end{equation}

Now, the induction hypothesis applied to the graph $G$ and the $2k-2$ vertices in $\overline{\{a_i, a_j\}}$ gives us 
\begin{equation}\label{cond2}
 [\m(G)]^{k-2}\m(G+\overline{\{a_1, a_i\}})=\pf(A_{1i}),
\end{equation}

\noindent where $A_{1i}$ is same as in equation \eqref{cond1}. So using equations \eqref{cond1} and \eqref{cond2} we get
\begin{equation}\label{cond3}
 \pf(A)=[\m(g)]^{k-2}\sum_{i=2}{2k}(-1)^i\m(G+\{a_1, a_i\})\m(G+\overline{\{a_1, a_i\}}).
\end{equation}

\noindent Now using Propositition \ref{ck3}, we see that the above sum is $\m(G)\m(G+\{a_1, \ldots, a_{2k}\})$ and hence equation \eqref{cond3} implies \eqref{cond4}.

\end{proof}

\section{Some family of regions with defects}\label{s3}

In this section, we find the number of tilings by dominoes of certain regions which appear in the statement of Theorem \ref{mt1} and Theorem \ref{mt3}. We define the binomial coefficients that appear in this section as follows

\begin{equation*}
\binom{c}{d} := \begin{cases} \dfrac{c(c-1)\cdots(c-d+1)}{d!}, &\text{if } d\geq0\\
0, &\text{otherwise} \end{cases}.
\end{equation*}

\noindent Our formulas also involve hypergeometric series. We recall that the hypergeometric series of parameters $a_1, \ldots, a_r$ and $b_1, \ldots, b_s$ is defined as 

\[_rF_s\left[\hyper{a_1, \ldots, a_r}{b_1, \ldots, b_s}\,;z\right]=\sum_{k=0}^{\infty}\frac{(a_1)_k\cdots (a_r)_k}{(b_1)_k\cdots (b_s)_k}\frac{z^k}{k!}.\]

We also fix a notation for the remainder of this paper as follows, if we remove the squares labelled $2,4,7$ from the south-eastern boundary of $\ar_{4,7}$, we denote it by $\ar_{4,7}(2,4,7)$. In the derivation of the results in this section, the following two corollaries of Theorem \ref{ar} will be used.

\begin{corollary}\label{cor1}
 The number of tilings of $\ar_{a,a+1}(i)$ is given by \[2^{a(a+1)/2}\binom{a}{i-1}.\]
\end{corollary}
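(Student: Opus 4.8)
The plan is to specialize the Helfgott--Gessel formula (Theorem \ref{ar}) directly to the case at hand. The region $\ar_{a,a+1}(i)$ is obtained by taking $b=a+1$ in $\ar_{a,b}$ and removing all unit squares from the southeastern side \emph{except} for the single square in position $i$. In the notation of Theorem \ref{ar}, keeping only position $i$ means the retained positions $s_1<s_2<\cdots<s_a$ are exactly the $a$ values in $\{1,2,\ldots,a+1\}\setminus\{i\}$, since $b-a=1$ square is removed in total, so precisely one position among the $a+1$ possible is deleted and that deleted position is $i$. Thus the retained set is $\{1,\ldots,i-1,i+1,\ldots,a+1\}$.

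First I would plug this retained set into the product $2^{a(a+1)/2}\prod_{1\leq p<q\leq a}\frac{s_q-s_p}{q-p}$ from Theorem \ref{ar}. The denominator $\prod_{1\le p<q\le a}(q-p)$ is the standard superfactorial $\prod_{m=1}^{a-1} m!$, independent of $i$. The key computation is the numerator $\prod_{1\le p<q\le a}(s_q-s_p)$, the Vandermonde-type product of differences over the retained positions. Since the retained positions are the consecutive integers $1,\ldots,a+1$ with the single value $i$ omitted, I would evaluate this product of differences explicitly and show that the ratio (numerator over denominator) collapses to the binomial coefficient $\binom{a}{i-1}$.

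The cleanest way to carry out this last step is to compare the difference-product for the full set $\{1,\ldots,a+1\}$ (which has $a+1$ elements) with that for the set with $i$ removed. Deleting the element $i$ from the consecutive set $\{1,\ldots,a+1\}$ removes from the Vandermonde product exactly the factors $(i-p)$ for $p<i$ and $(q-i)$ for $q>i$; these contribute $(i-1)!$ and $(a+1-i)!$ respectively. The full Vandermonde $\prod_{1\le p<q\le a+1}(q-p)$ over the consecutive set is again a superfactorial, $\prod_{m=1}^{a} m!$. Dividing out the deleted factors and then dividing by the denominator $\prod_{m=1}^{a-1} m!$ leaves $\frac{a!}{(i-1)!\,(a+1-i)!}=\binom{a}{i-1}$, which is precisely the claimed formula.

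I expect the main obstacle to be purely bookkeeping: correctly identifying which factors of the difference-product are lost when the element $i$ is deleted from a consecutive block, and tracking the two superfactorial-type products so that everything telescopes cleanly to $\binom{a}{i-1}$. There is no deep idea needed beyond Theorem \ref{ar}; the only care required is in the index arithmetic, particularly in matching the offset $i-1$ in the binomial coefficient against the omitted position $i$.
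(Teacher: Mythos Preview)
Your approach is correct and is exactly what the paper intends: Corollary~\ref{cor1} is stated there simply as an immediate specialization of Theorem~\ref{ar} with $b=a+1$, with no further proof given, and your Vandermonde computation carries this out cleanly. One wording issue to fix: by the paper's convention $\ar_{a,a+1}(i)$ means the square at position $i$ is \emph{removed}, so the retained positions are $\{1,\ldots,a+1\}\setminus\{i\}$ (which is indeed the set you compute with); your phrases ``removing all unit squares \ldots\ except for the single square in position $i$'' and ``keeping only position $i$'' say the opposite and contradict the rest of your argument.
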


\begin{corollary}\label{cor2}
 The number of tilings of $\ar_{a,b}(2,\ldots, b-a+1)$ is given by \[2^{a(a+1)/2} \binom{b-1}{a-1}.\]
\end{corollary}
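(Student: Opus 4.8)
The plan is to specialize Theorem~\ref{ar} to the particular set of retained positions implied by the notation $\ar_{a,b}(2,\dots,b-a+1)$. Removing the squares labelled $2,3,\dots,b-a+1$ from the southeastern side leaves exactly the $a$ squares in positions $s_1=1$ and $s_j=b-a+j$ for $2\le j\le a$; in particular $s_a=b$. Since these are $a$ strictly increasing values in $\{1,\dots,b\}$, Theorem~\ref{ar} applies directly and evaluates the count as $2^{a(a+1)/2}\prod_{1\le i<j\le a}\frac{s_j-s_i}{j-i}$. It therefore remains only to show that this product of quotients equals $\binom{b-1}{a-1}$, since the power of $2$ already matches the claimed formula.

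To evaluate the product I would split the index pairs $(i,j)$ with $1\le i<j\le a$ into those with $i=1$ and those with $2\le i<j\le a$. For the second group, $s_j-s_i=(b-a+j)-(b-a+i)=j-i$, so each factor $\frac{s_j-s_i}{j-i}$ equals $1$ and the whole group contributes $1$. For the pairs with $i=1$, the numerators $s_j-s_1=b-a+j-1$ range over $b-a+1,b-a+2,\dots,b-1$ as $j$ runs from $2$ to $a$, while the denominators $j-1$ range over $1,2,\dots,a-1$. Hence this group contributes $\frac{(b-1)!/(b-a)!}{(a-1)!}=\binom{b-1}{a-1}$, which is exactly the asserted factor, and multiplying by the trivial contribution of the first group completes the identity.

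This is essentially a routine computation once the $s_j$ are correctly identified, so I do not anticipate a genuine obstacle. The only points requiring care are translating the labelling convention (which labels are \emph{removed} versus \emph{retained}) into the $s_j$ used in Theorem~\ref{ar}, and verifying that the contribution of the pairs with $i\ge 2$ collapses to $1$ rather than being reindexed incorrectly; a small numerical check, such as $a=2,b=4$ giving $2^{3}\binom{3}{1}=24$, serves to confirm the bookkeeping.
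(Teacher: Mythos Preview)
Your proposal is correct and is precisely the intended derivation: the paper presents this as an immediate corollary of Theorem~\ref{ar} without writing out the computation, and your argument supplies exactly the specialization $s_1=1$, $s_j=b-a+j$ for $j\ge 2$ and the resulting product evaluation. There is nothing to add.
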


\begin{figure}[!htb]
\centering
\includegraphics[scale=.7]{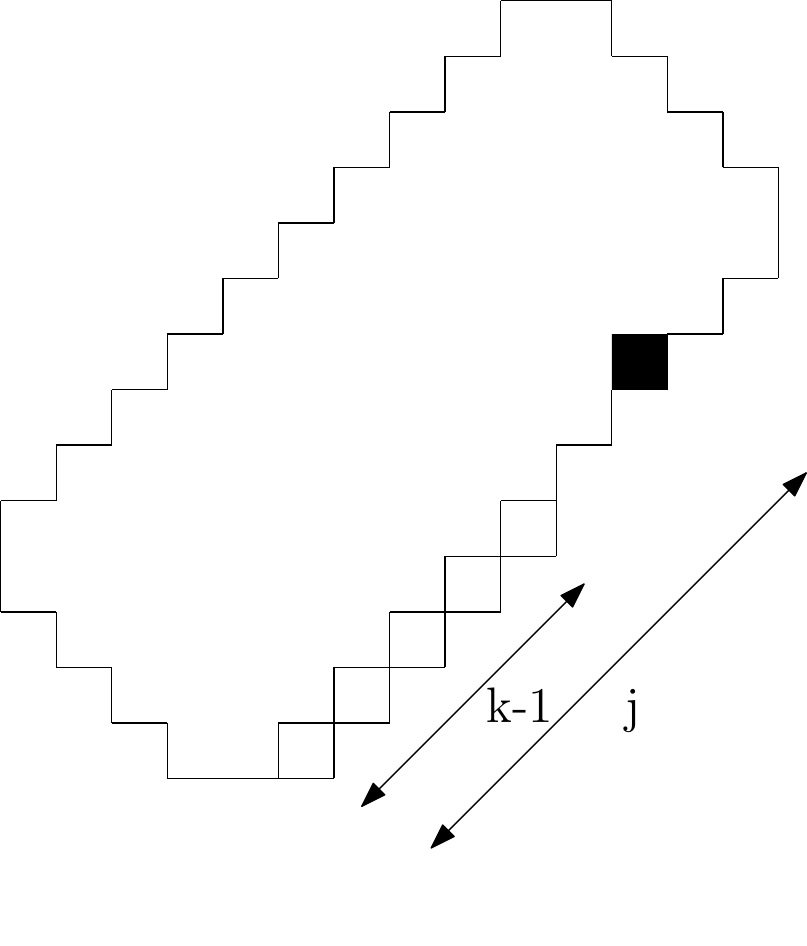}
\caption{Aztec rectangle with $k-1$ squares added on the southeastern side and a defect on the $j$-th position shaded in black; here $a=4,b=10,k=6,j=8$}
\label{fig:ar_k_i}
\end{figure}

\begin{proposition}\label{ar_k_i}
 
 Let $1\leq a\leq b$ be positive integers with $k=b-a>0$, then the number of domino tilings of $\ar_{a,b}(j)$ with $k-1$ squares added to the southeastern side 
 starting at the second position (and not at the bottom) as shown in the Figure \ref{fig:ar_k_i} is given by 
 
 \begin{equation}\label{k-1,j}
 2^{a(a+1)/2}\binom{a+k-1}{j-1}\binom{j-2}{k-1}~_3F_2\left[\hyper{1,1-j,1-k}{2-j, 1-a-k}\,;1\right].  
 \end{equation}
 
\end{proposition}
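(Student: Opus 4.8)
The plan is to reduce the region of Figure \ref{fig:ar_k_i} to standard Helfgott--Gessel rectangles by forcing dominoes along the added boundary, and then to sum the resulting products over a single combinatorial parameter. First I would examine the $k-1$ squares glued to the southeastern side together with the black defect at position $j$: near the bottom corner and along the added string many dominoes are forced, and I expect these forced placements to sever the region, for each tiling, into two pieces that are themselves Aztec rectangles with prescribed squares retained on their southeastern sides. The precise cut will depend on one integer parameter, say $i$, recording where the forced zigzag emanating from the defect (equivalently, the partner of the unmatched added square) leaves the interior; this is the bookkeeping I would set up first, checking the degenerate case $k=1$ against Corollary \ref{cor1}, which the formula \eqref{k-1,j} must reproduce once $1-k=0$ kills every term of the ${}_3F_2$ beyond $m=0$.

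For each value of $i$ the two pieces have no further interaction, so the number of tilings factors as a product of two counts, each of which is given either by Theorem \ref{ar} or, in the relevant block-plus-outlier configurations, directly by Corollary \ref{cor1} or Corollary \ref{cor2}. Consequently each summand is a power of $2$ times a product of two binomial coefficients, and the powers of $2$ from the two pieces should combine to the global factor $2^{a(a+1)/2}$ predicted by the statement. The total count is then $\sum_i$ of such products, and the boundary term should be exactly $2^{a(a+1)/2}\binom{a+k-1}{j-1}\binom{j-2}{k-1}$, which I would pull out as the prefactor. Here it is essential that the retained sets arising for generic $i$ are \emph{not} near-blocks, since otherwise each product would collapse to a single binomial and could not assemble into a nontrivial series.

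It remains to identify the normalized sum $\sum_m t_m$, where $t_m$ is the ratio of the $m$-th summand to the boundary term, with the terminating series ${}_3F_2[1,1-j,1-k;2-j,1-a-k;1]$. For this I would not attempt a closed-form (Saalsch\"utz-type) evaluation, since the series is not balanced and no product form is expected, but instead verify the hypergeometric term ratio directly, namely that
\[\frac{t_{m+1}}{t_m}=\frac{(1-j+m)(1-k+m)}{(2-j+m)(1-a-k+m)},\]
which is precisely the defining ratio of the claimed ${}_3F_2$. The main obstacle is this last matching: it requires writing each binomial factor coming from the two Helfgott--Gessel pieces in Pochhammer form, carefully tracking the index shifts introduced by the position $j$ and by the $k-1$ added squares, and confirming that all spurious factors cancel so that only the three numerator and two denominator Pochhammer symbols survive. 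Getting the shifts in $j$, $k$ and $a$ exactly right in that cancellation is where I expect the real work to lie.
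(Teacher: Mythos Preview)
Your plan and the paper's proof converge on the same hypergeometric sum, but they get there differently, and your geometric step is not quite right.

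The paper does \emph{not} sever the region into two independent pieces. Instead it observes that the bottom-most of the $k-1$ added squares has exactly two ways to be covered, yielding the recursion
\[
\m(\ar_{a,b}^{k-1,j})=\m(\ar_{a,b-1}^{k-2,j-1})+\m(\ar_{a,b}(2,3,\ldots,k,j)),
\]
and then iterates this $k-1$ times to obtain
\[
\m(\ar_{a,b}^{k-1,j})=\sum_{l=0}^{k-2}\m(\ar_{a,b-l}(2,\ldots,k-l,j-l))+\m(\ar_{a,a+1}(j-k+1)).
\]
Each summand is a \emph{single} Helfgott--Gessel rectangle count, not a product of two region counts; the parameter $l$ records how many times the first branch of the recursion was taken before the second branch fires, which is essentially the ``where the forced zigzag leaves'' index you had in mind. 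Your expectation that the forced dominoes cut the region into two independent Aztec rectangles is not borne out: once the added string is resolved, what remains is one $\ar_{a,b-l}$ with a prescribed retained set on its southeastern side.

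That said, your algebraic expectation is correct for a different reason: for the retained set $\{1\}\cup\{k-l+1,\ldots,b-l\}\setminus\{j-l\}$ the Vandermonde product in Theorem~\ref{ar} factors cleanly, and one finds
\[
\m(\ar_{a,b-l}(2,\ldots,k-l,j-l))=2^{a(a+1)/2}\binom{a+k-l-1}{j-l-1}\binom{j-l-2}{k-l-1},
\]
a product of two binomials. From here your term-ratio check against the ${}_3F_2$ goes through exactly as you outlined. So the fix is small: replace the unjustified ``two pieces'' picture by the paper's two-choice recursion on the first added square, and then your hypergeometric identification finishes the argument.
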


\begin{proof}
Let us denote the region in Figure \ref{fig:ar_k_i} by $\dk$ and we work with the planar dual graph of the region $\dk$ and count the number of matchings of 
that graph. We first notice that the first added square in any tiling of the region in Figure \ref{fig:ar_k_i} by dominoes has two possibilities marked in grey in the Figure 
\ref{fig:ar_k_i1}. This observation 
allows us to write the number of tilings of $\dk$ in terms of the following recursion

\begin{equation}\label{ep31}
 \m (\dk)=\m (\dkk)+\m (\ar_{a,b}(2,3,\ldots, k, j)).
\end{equation}

\noindent which can be verified from Figure \ref{fig:grey}.

\begin{figure}[!htb]
\centering
\includegraphics[scale=.7]{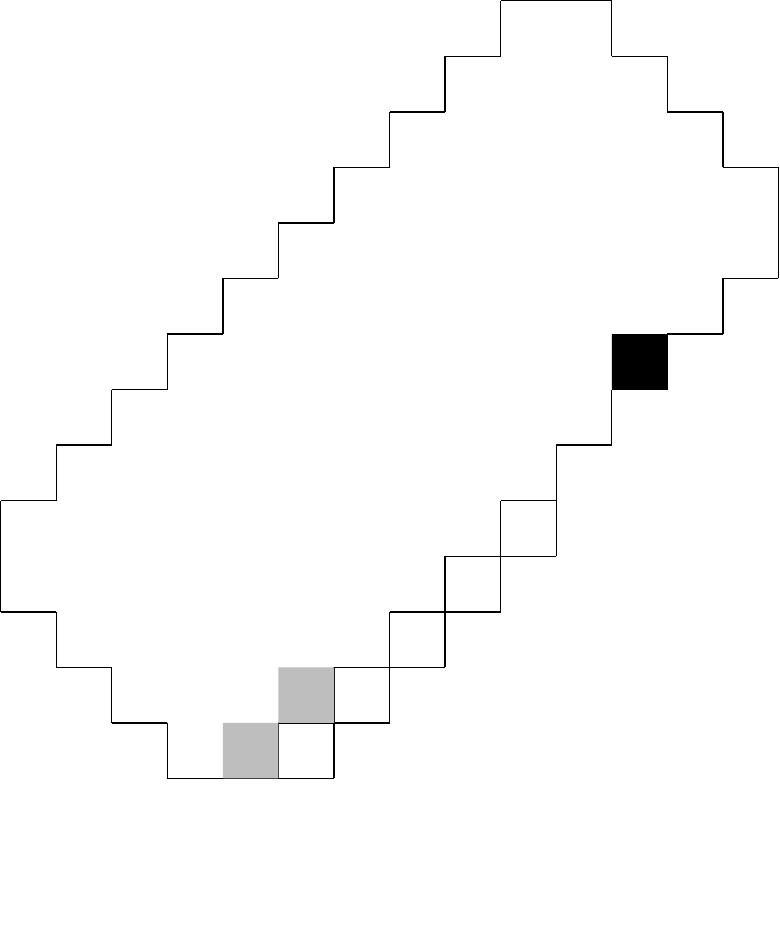}
\caption{$\dk$ with the possible choices for the first added square in a tiling; here $a=4, b=10, k=6, j=8$}
\label{fig:ar_k_i1}
\end{figure}

\noindent Repeatedly using equation \eqref{ep31} $k-1$ times on succesive iterations, we shall finally obtain
\begin{equation}\label{ep31-2}
\m (\dk)=\sum_{l=0}^{k-2}\m (\ar_{a,b-l}(2,3,\ldots,k-l,j-l))+\m (\ar_{a,a+1}(j-k+1)).
\end{equation}

\begin{figure}[!htb]
\minipage{0.50\textwidth}
  \includegraphics[width=\linewidth, center]{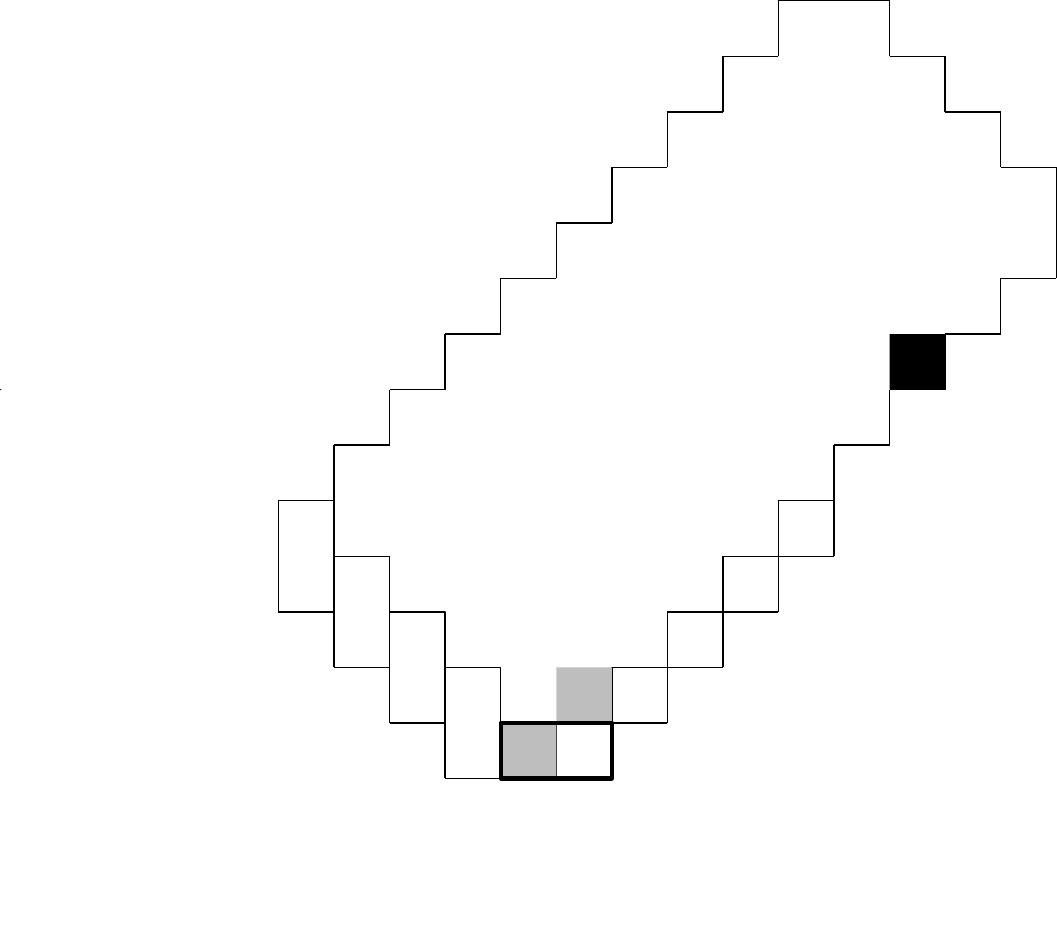}
\endminipage\hfill
\minipage{0.50\textwidth}
  \includegraphics[width=\linewidth, center]{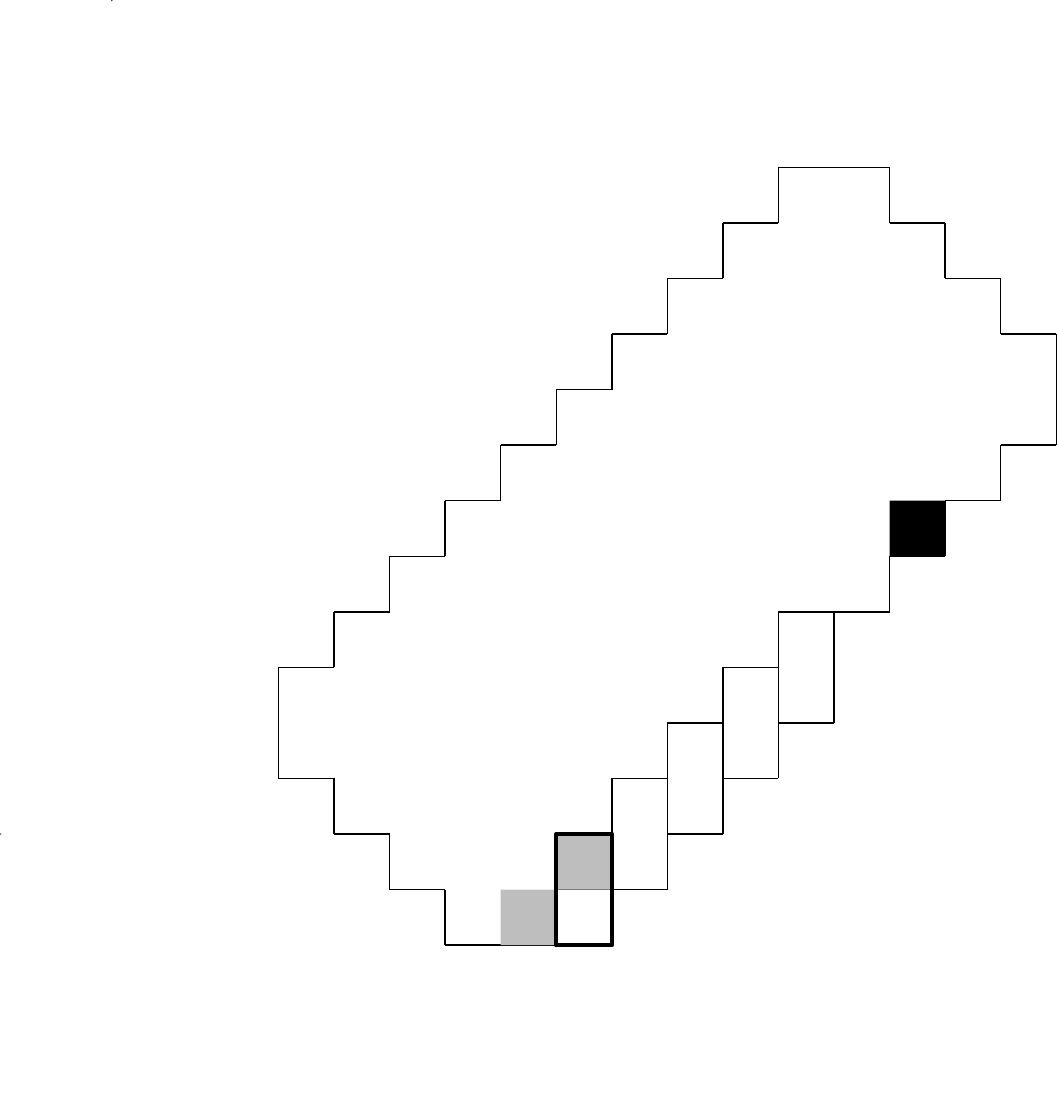}
\endminipage
\caption{Choices for the tilings of $\dk$ with forced dominoes; here $a=4,b=10,k=6,j=8$}
\label{fig:grey}
\end{figure}

Now, plugging in the values of the quantities in the right hand side of equation \eqref{ep31-2} from Theorem \ref{ar} and Corollary \ref{cor1} we shall obtain equation \eqref{k-1,j}.
 
\end{proof}

One of the main ingredients in our proofs of the remaining results in this section are the following results of Kuo \cite{kuo}.

\begin{theorem}\cite[Theorem 2.3]{kuo}\label{kk1}
Let $G=(V_1, V_2, E)$ be a plane bipartite graph in which $\abs{V_1}=\abs{V_2}$. Let $w, x, y$ and $z$ be vertices of $G$ that appear in cyclic order on a face of $G$. If $w,x\in V_1$ and $y, z\in V_2$ then 
$$\m (G-\{w,z\})\m (G-\{x,y\})=\m (G)\m (G-\{w,x,y,z\})+\m (G-\{w,y\})\m (G-\{x,z\}).$$
\end{theorem}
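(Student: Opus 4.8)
The plan is to adapt the superposition-of-matchings argument already used in the proof of Proposition \ref{ck3} above (Ciucu's method), specialized to the four marked vertices $w,x,y,z$. First I would recast each of the three products of matching numbers as the total weight of a set of ordered pairs of perfect matchings: the left-hand side corresponds to $\mm(G-\{w,z\})\times\mm(G-\{x,y\})$, while the right-hand side corresponds to the disjoint union $[\mm(G)\times\mm(G-\{w,x,y,z\})]\cup[\mm(G-\{w,y\})\times\mm(G-\{x,z\})]$. As before, for a pair $(\mu,\nu)$ I draw the edges of $\mu$ as solid and those of $\nu$ as dotted on a single copy of $G$ and assign it the product weight, so that the task becomes the construction of a weight-preserving bijection between the two sets.

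Next I would analyze the structure of the superposition $\mu\cup\nu$. Every vertex of $G$ outside $\{w,x,y,z\}$ meets exactly one solid and one dotted edge, whereas each of $w,x,y,z$ meets exactly one edge; hence $\mu\cup\nu$ is a disjoint union of alternating cycles together with exactly two vertex-disjoint alternating paths whose four endpoints are $w,x,y,z$. Two facts then pin down how these paths may join the four vertices. Since $G$ is bipartite with $w,x\in\vv(G)\cap V_1$ and $y,z\in V_2$, a path between two vertices of the same part has even length and hence opposite end-colors, while a path between vertices of different parts has odd length and equal end-colors; comparing this with the end-colors forced by which of $\mu,\nu$ omits each vertex restricts the admissible pairings in each of the three sets. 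Crucially, because $w,x,y,z$ lie in this cyclic order on a single face and the two paths are vertex-disjoint and drawn in the plane, the ``crossing'' pairing $\{w,y\},\{x,z\}$ is forbidden. Combining the two constraints, the left-hand set realizes only the pairings $\{w,x\},\{y,z\}$ and $\{w,z\},\{x,y\}$; the set $\mm(G)\times\mm(G-\{w,x,y,z\})$ realizes only $\{w,z\},\{x,y\}$; and $\mm(G-\{w,y\})\times\mm(G-\{x,z\})$ realizes only $\{w,x\},\{y,z\}$.

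Finally I would define the bijection by switching solid and dotted edges along the path containing the distinguished vertex $w$, exactly as a single path is toggled in the proof of Proposition \ref{ck3}. If that path runs from $w$ to $x$, switching exchanges the omitted vertices so that the resulting pair lies in $\mm(G-\{w,y\})\times\mm(G-\{x,z\})$; if it runs from $w$ to $z$, switching produces a pair in $\mm(G)\times\mm(G-\{w,x,y,z\})$. Toggling the $w$-path leaves the cycles and the second path untouched, preserves the product weight, and is undone by toggling the same path again, so the map is a weight-preserving bijection, and summing weights yields the asserted identity. The main obstacle is this structural step: one must argue carefully that bipartite parity together with planar non-crossing eliminates precisely the pairing $\{w,y\},\{x,z\}$ from every set involved, since it is exactly this elimination — rather than the parity constraint alone — that forces the two admissible configurations on the left to match the two right-hand sets bijectively.
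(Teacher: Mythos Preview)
The paper does not supply its own proof of this statement: Theorem~\ref{kk1} is quoted from Kuo~\cite{kuo} and used as a black box. Your argument is correct and is precisely Kuo's original superposition proof, which is also the $k=2$ specialization of the method the paper does carry out in detail for Proposition~\ref{ck3}; so in spirit you are following the same approach the paper adopts for the generalized identity. One cosmetic point: when the $w$--path runs to $x$ and you toggle, the resulting ordered pair actually lands in $\mm(G-\{x,z\})\times\mm(G-\{w,y\})$ rather than $\mm(G-\{w,y\})\times\mm(G-\{x,z\})$ as you wrote, but since only the product of weights matters this does not affect the argument.
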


\begin{theorem}\cite[Theorem 2.5]{kuo}\label{kk}
Let $G=(V_1, V_2, E)$ be a plane bipartite graph in which $\abs{V_1}=\abs{V_2}+2$. Let the vertices $w,x,y$ and $z$ appear in that cyclic order on a face of $G$. Let $w,x,y,z\in V_1$, then 
$$\m (G-\{w,y\})\m (G-\{x,z\})=\m (G-\{w,x\})\m (G-\{y,z\})+\m (G-\{w,z\})\m (G-\{x,y\}).$$
\end{theorem}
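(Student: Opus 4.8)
The plan is to prove the identity by the standard superposition-of-matchings technique, in the same spirit as the proof of Proposition \ref{ck3} above. I would read each of the three products as a set of ordered pairs of perfect matchings and construct a weight-preserving bijection between the pairs counted by the left-hand side and those counted by the two terms on the right.

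First I would fix a pair $(\mu,\nu)\in \mm(G-\{w,y\})\times \mm(G-\{x,z\})$ and superimpose $\mu$ (drawn solid) and $\nu$ (drawn dotted) on a single copy of $G$. Since $G$ is bipartite with $\abs{V_1}=\abs{V_2}+2$ and $w,x,y,z\in V_1$, every vertex other than $w,x,y,z$ is incident to exactly one solid and one dotted edge, while $x,z$ carry a single solid edge and $w,y$ carry a single dotted edge. Hence $\mu\cup\nu$ is a disjoint union of alternating cycles together with exactly two vertex-disjoint alternating paths whose four endpoints are $w,x,y,z$.

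The key structural step is to determine how the two paths pair up these four endpoints. Because all four vertices lie in the same class $V_1$, each alternating path joins two vertices of $V_1$ and therefore has even length; consequently its two terminal edges are of opposite type (one solid, one dotted). As the solid-terminal vertices are exactly $\{x,z\}$ and the dotted-terminal vertices are exactly $\{w,y\}$, each path must join one of $\{w,y\}$ to one of $\{x,z\}$, leaving only the pairings $\{w,x\},\{y,z\}$ and $\{w,z\},\{x,y\}$; both are non-crossing and hence compatible with planarity. I would then define the map by locating the path $P$ that contains $w$ and interchanging the solid and dotted edges along $P$. A direct check of which endpoints gain or lose their solid and dotted edges shows that in the first case the pair is sent into $\mm(G-\{x,y\})\times\mm(G-\{w,z\})$ and in the second case into $\mm(G-\{y,z\})\times\mm(G-\{w,x\})$, i.e. precisely the two terms on the right-hand side; since the underlying edge multiset is unchanged, the product of weights is preserved.

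Finally I would verify invertibility: starting from a pair counted by either right-hand term, the superposition again splits into alternating cycles and two vertex-disjoint paths, and here I would use planarity in an essential way to exclude the crossing pairing $\{w,y\},\{x,z\}$ (which is permitted by the bipartite parity alone in this case), forcing the two paths to realize exactly the expected non-crossing pairing. Switching along the path containing $w$ then returns the original pair, so the construction is an involution exchanging the left-hand pairs with the union of the right-hand pairs, which yields the identity. I expect the main obstacle to be precisely this last point: combining the bipartite parity constraint on the terminal edge types with the planar non-crossing constraint to pin down the endpoint pairings on both sides, while keeping the solid/dotted bookkeeping consistent so that each switched pair lands in the correct factor.
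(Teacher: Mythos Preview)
Your argument is correct and is precisely Kuo's original superposition proof: the bipartite parity forces each of the two alternating paths in $\mu\cup\nu$ to have ends of opposite edge-type, which on the left-hand side already restricts to the two non-crossing pairings, and on the right-hand side planarity is what rules out the crossing pairing $\{w,y\},\{x,z\}$; the edge-swap along the path through $w$ is then a weight-preserving involution.

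Note, however, that the present paper does not supply its own proof of this statement: Theorem~\ref{kk} is simply quoted from \cite[Theorem~2.5]{kuo} and used as a black box in the proofs of Propositions~\ref{ar_i_j} and~\ref{ar_k-1_i}. The only place where the paper carries out a superposition argument of this kind is in Proposition~\ref{ck3} (feeding into Theorem~\ref{condensation-2}), and your write-up is in exactly that spirit. So there is nothing to compare against beyond observing that your approach coincides with Kuo's and with the technique the paper itself employs for its generalized condensation result.
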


The following proposition does not appear explicitely in the statement of Theorem \ref{mt1}, but it is used in deriving Proposition \ref{ar_k-1_i}.

\begin{figure}[!htb]
\centering
\includegraphics[scale=.7]{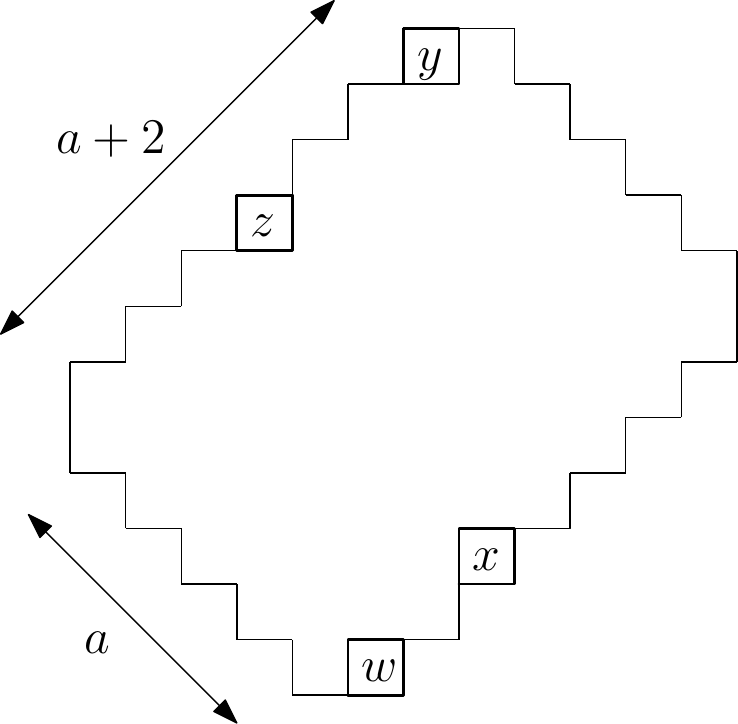}
\caption{An $a\times (a+2)$ Aztec rectangle with some labelled squares; here $a=5$}
\label{fig:ar_i_j}
\end{figure}

\begin{proposition}\label{ar_i_j}
 
 Let $1\leq a$ be a positive integer, then the number of tilings of $\ar_{a, a+2}$ with a defect at the $i$-th position on the southeastern side counted from the south corner and a defect on the $j$-th position on the northwestern side 
 counted from the west corner is given by 
 \begin{equation}\label{arij}
2^{a(a+1)/2}\left[\binom{a}{i-2}\binom{a}{j-1}+\binom{a}{i-1}\binom{a}{j-2}\right].
\end{equation}

\end{proposition}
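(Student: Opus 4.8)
The plan is to prove Proposition~\ref{ar_i_j} by applying Kuo's condensation (Theorem~\ref{kk1}) to the dual graph of $\ar_{a,a+2}$, reducing the computation of $\m(\ar_{a,a+2}\setminus\{\al_i,\be_j\})$ to quantities that are already known from Theorem~\ref{ar} and its corollaries. The region $\ar_{a,a+2}$ has two more white squares than black, so after removing one white defect $\al_i$ (on the southeastern side, $i$-th position) and one black defect $\be_j$ (on the northwestern side, $j$-th position) we obtain a balanced region whose tilings we wish to count. The left-hand side $\m(G-\{w,z\})\m(G-\{x,y\})$ of Theorem~\ref{kk1} should be arranged so that one of its two factors is exactly the desired quantity and the other is a simple, already-evaluated tiling count.

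First I would set up the four condensation vertices $w,x,y,z$ on a suitable face. The natural choice is to take two of them to be the defect positions $\al_i$ and $\be_j$, and the other two to be a pair of corner-adjacent squares whose removal forces a batch of dominoes and collapses the region to a smaller Aztec rectangle of the type handled by Corollary~\ref{cor1} or Corollary~\ref{cor2}. Because $\ar_{a,a+2}$ is unbalanced by two, the relevant instance of Kuo's theorem is the one where $G$ itself is already imbalanced after some squares are stripped off; more precisely, I would introduce auxiliary defects so that each of the four terms appearing in the Kuo identity is either the target expression or a product of binomial coefficients coming from Theorem~\ref{ar}. The structure of the expected answer \eqref{arij}, namely the sum of the two products $\binom{a}{i-2}\binom{a}{j-1}$ and $\binom{a}{i-1}\binom{a}{j-2}$, strongly suggests that Theorem~\ref{kk1} is the right tool: its right-hand side is precisely a sum of two products of matching numbers, and each product should factor into one binomial in the variable $i$ and one in the variable $j$. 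I would match $\binom{a}{i-2}$ and $\binom{a}{i-1}$ to the two possible positions of a forced domino near $\al_i$ via Corollary~\ref{cor1}, and similarly for the $j$-dependence near $\be_j$.

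The key steps, in order, are: (1) identify the correct face and the four vertices $w,x,y,z$ in cyclic order, checking the bipartite color constraints ($w,x\in V_1$, $y,z\in V_2$) demanded by Theorem~\ref{kk1}; (2) verify that three of the four matching numbers in the Kuo identity reduce, after forcing dominoes, to $\ar_{a,a+1}(\,\cdot\,)$-counts evaluated by Corollary~\ref{cor1}, each contributing a single binomial coefficient; (3) solve the resulting linear relation for the unknown term $\m(\ar_{a,a+2}\setminus\{\al_i,\be_j\})$; and (4) simplify the combination of products of powers of $2$ and binomials into the closed form \eqref{arij}, noting that the powers of $2$ from the various sub-rectangles must combine to the single factor $2^{a(a+1)/2}$.

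The main obstacle I anticipate is step~(1): correctly placing the auxiliary removal vertices so that the forced dominoes leave behind exactly an $a\times(a+1)$ Aztec rectangle with a single southeastern dent, and so that the cyclic order and the bipartite coloring conditions of Theorem~\ref{kk1} are simultaneously satisfied. A secondary difficulty is bookkeeping the shifts in the dent index: removing $\al_i$ together with a forced domino changes the effective position to $i-1$ or $i-2$ depending on the local configuration, and getting these index shifts right is exactly what produces the two distinct binomials in each product. Once the geometry is pinned down, steps~(3) and~(4) are routine algebra, so the substance of the proof lies entirely in the careful choice of condensation vertices and the domino-forcing argument that evaluates the auxiliary terms.
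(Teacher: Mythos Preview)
Your overall plan---apply Kuo condensation to the dual graph of $\ar_{a,a+2}$ and reduce the auxiliary terms to $\ar_{a,a+1}(\,\cdot\,)$-counts via Corollary~\ref{cor1}---is exactly the paper's approach, and your reading of the target formula \eqref{arij} as ``sum of two products of single-dent counts'' is spot on. However, you have chosen the wrong variant of Kuo's theorem, and this is not just a cosmetic slip.

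The defect on the northwestern side is \emph{not} black. In the checkerboard coloring of $\ar_{a,a+2}$ both the southeastern and the northwestern boundary squares that can be removed are white; this is why removing one square from each side balances the region (which has two more white squares than black). Consequently both defects lie in the same color class $V_1$, so the hypothesis $w,x\in V_1$, $y,z\in V_2$ of Theorem~\ref{kk1} cannot be met with the defects among $\{w,x,y,z\}$, and the ambient graph $G=\ar_{a,a+2}$ fails $|V_1|=|V_2|$ anyway. Your own parenthetical (``the relevant instance of Kuo's theorem is the one where $G$ itself is already imbalanced'') is correct, but that instance is Theorem~\ref{kk}, not Theorem~\ref{kk1}.

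The paper takes $G=\ar_{a,a+2}$ (so $|V_1|=|V_2|+2$) and applies Theorem~\ref{kk} with all four vertices $w,x,y,z\in V_1$: two of them are the defects at positions $i$ and $j$, and the other two are white corner squares whose removal forces a strip of dominoes. One diagonal pair $\{w,y\}$ consists of the two corners (forcing down to $\ad(a)$), the other diagonal pair $\{x,z\}$ consists of the two defects (giving $\oo$), and each cross pair removes one corner and one defect, collapsing to an $\ar_{a,a+1}$ with a single southeastern dent at position $i-1$, $i$, $j-1$, or $j$. This yields
\[
\m(\ad(a))\,\m(\oo)=\m(\ar_{a,a+1}(i-1))\,\m(\ar_{a,a+1}(j))+\m(\ar_{a,a+1}(j-1))\,\m(\ar_{a,a+1}(i)),
\]
and dividing through by $\m(\ad(a))=2^{a(a+1)/2}$ and invoking Corollary~\ref{cor1} gives \eqref{arij} immediately. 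Once you swap Theorem~\ref{kk1} for Theorem~\ref{kk} and correct the color of the northwestern defect, your steps (2)--(4) go through exactly as you outlined, with no index-shift subtleties beyond the single $\pm 1$ coming from which corner is paired with which defect.
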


\begin{proof}

\begin{figure}[!htb]
\minipage{0.50\textwidth}
  \includegraphics[scale=0.6, center]{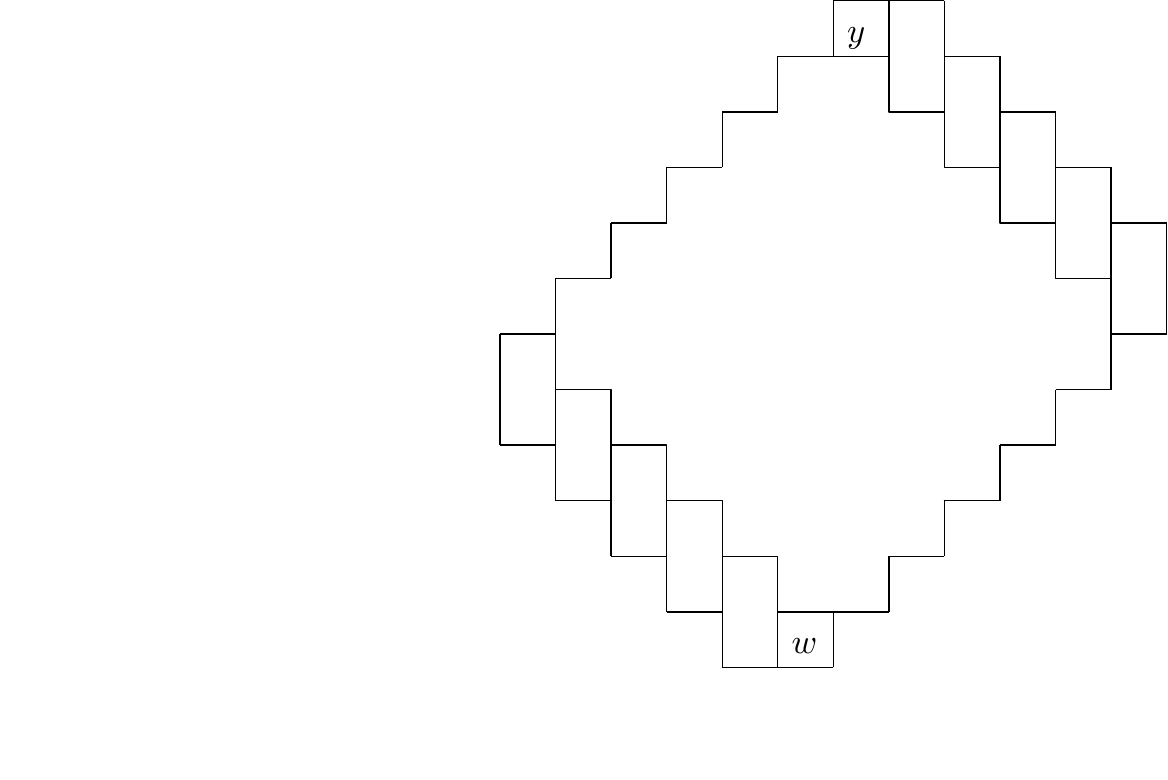}
\endminipage\hfill
\minipage{0.50\textwidth}
  \includegraphics[scale=0.6, center]{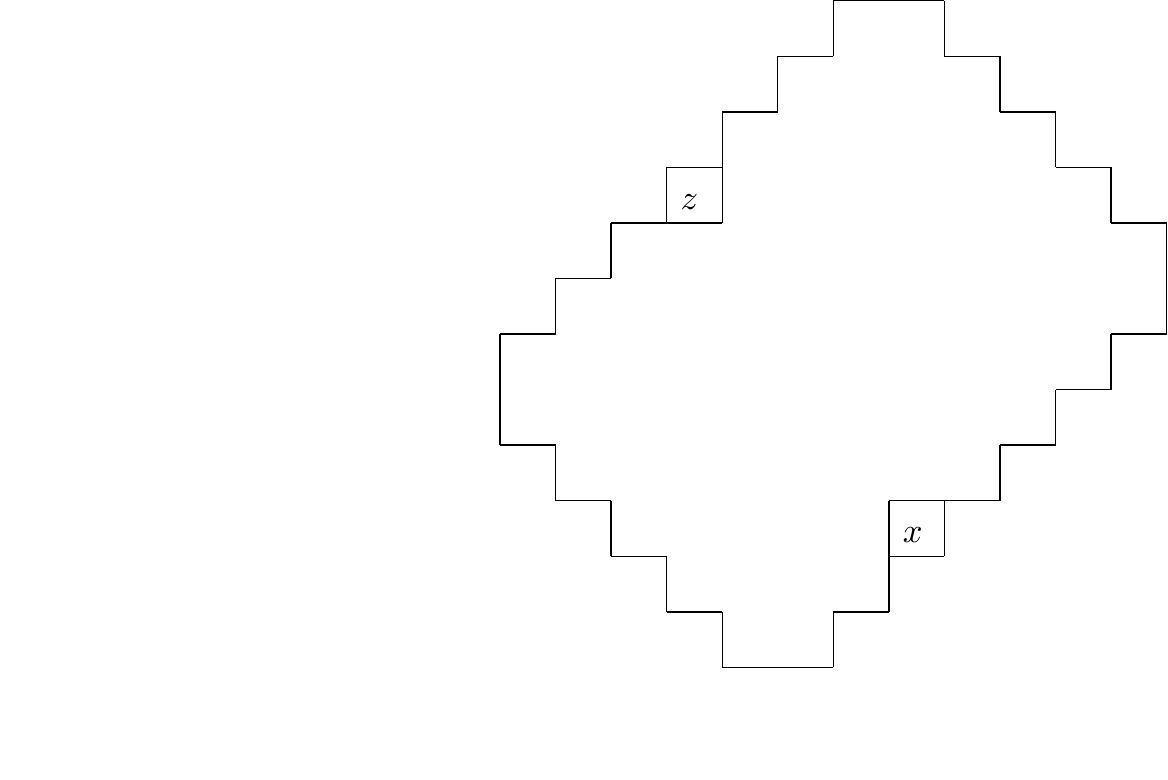}
  \endminipage\hfill
\minipage{0.50\textwidth}
  \includegraphics[scale=0.6, center]{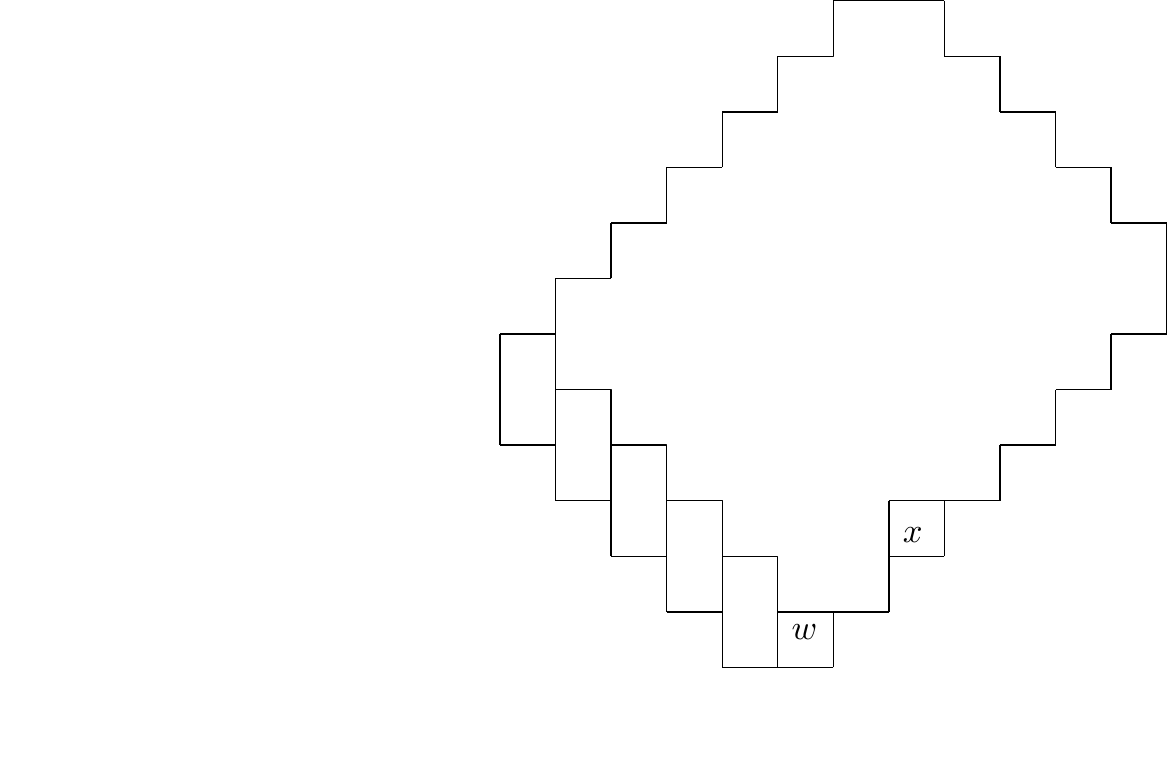}
  \endminipage\hfill
\minipage{0.50\textwidth}
  \includegraphics[scale=0.6, center]{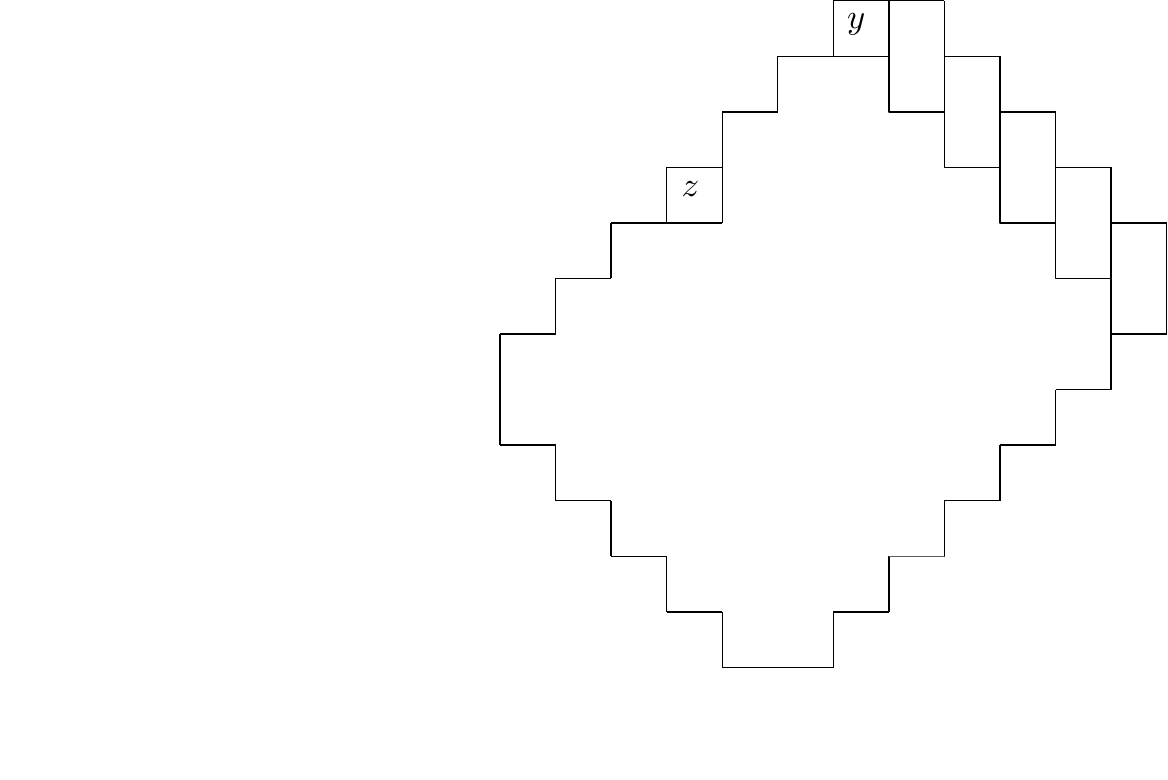}
  \endminipage\hfill
\minipage{0.50\textwidth}
  \includegraphics[scale=0.6, center]{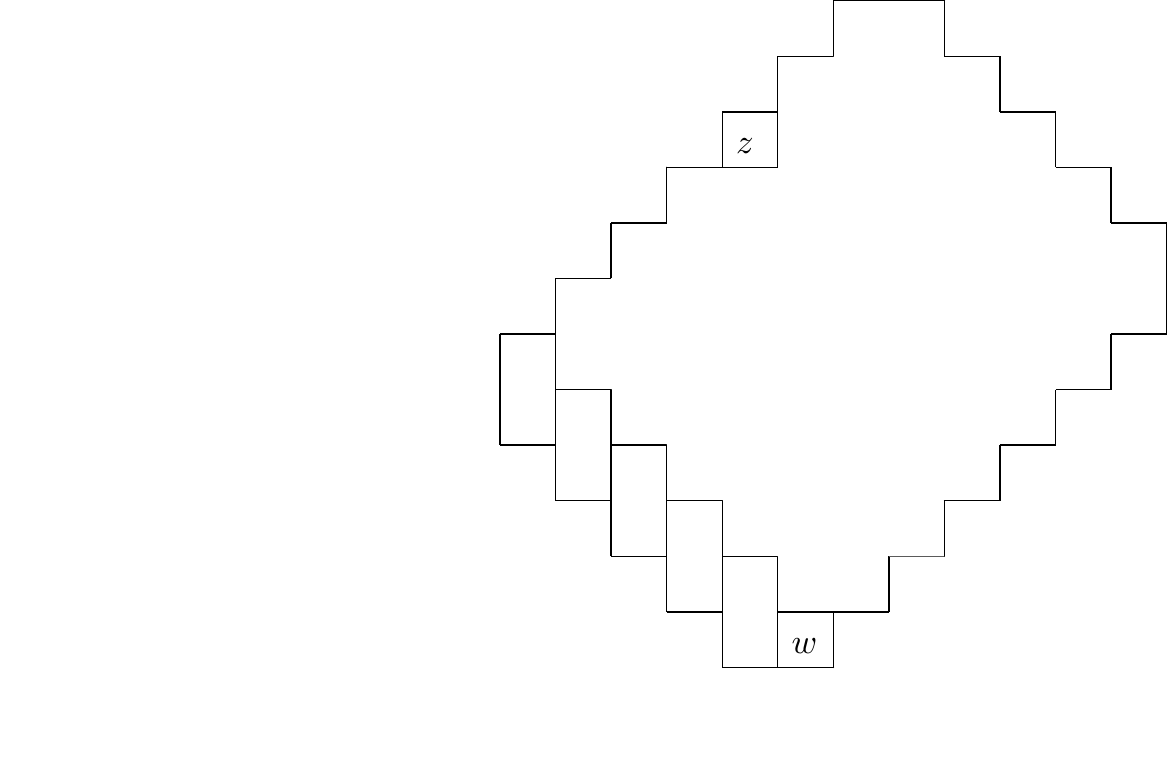}
  \endminipage\hfill
\minipage{0.50\textwidth}
  \includegraphics[scale=0.6, center]{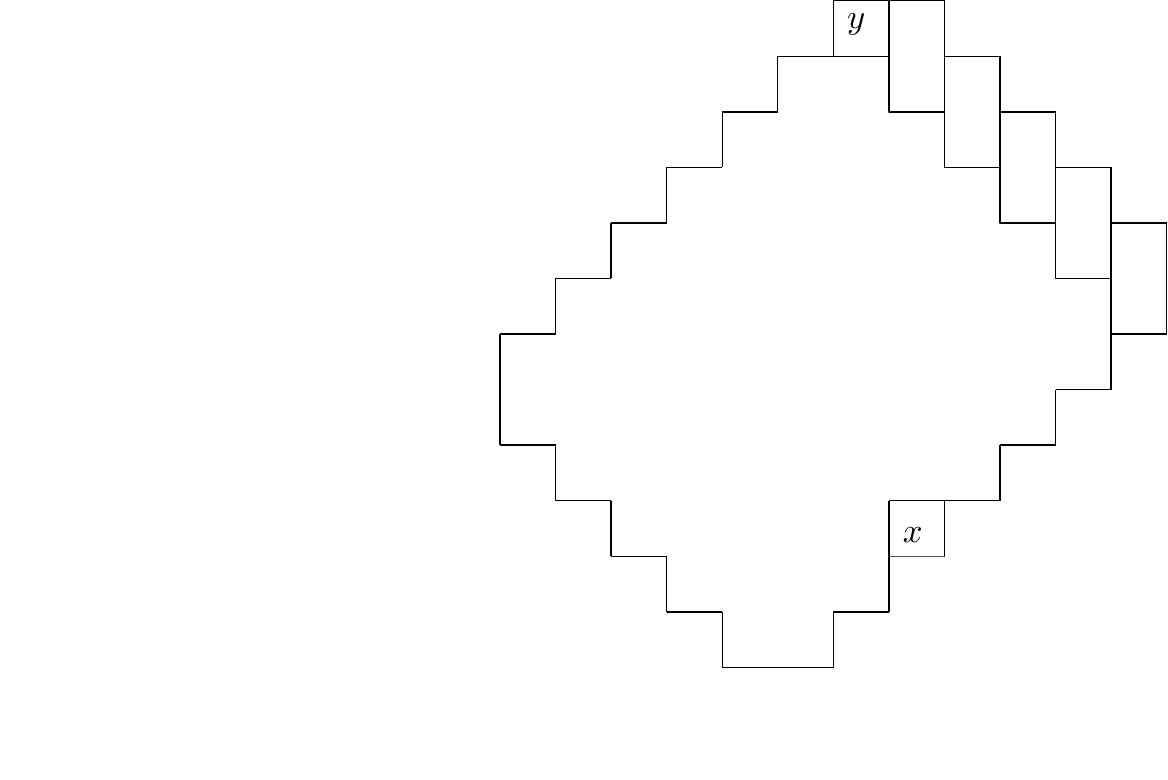}
\endminipage
\caption{Some forced dominoes in the proof of Proposition \ref{ar_i_j} where the vertices we remove are labelled}
\label{fig:kuo-1}
\end{figure}

If $j=1$ or $j=a+2$, then the region we want to tile reduces to the type in Theorem \ref{ar} and it is easy to see that the expression \eqref{arij} is satisfied in these cases. By symmetry, this 
also takes care of the cases $i=1$ and $i=a+2$.
 
In the rest of the proof, we now assume that $1<i,j<a+2$ and let us denote the region we are interested in by $\oo$. We now use Theorem \ref{kk} with the vertices as indicated in Figure \ref{fig:ar_i_j} to obtain the following identity
 (Figure \ref{fig:kuo-1}).
 
 \begin{align}\label{ep32}
  \m (\ad(a))\m (\oo) =& \m (\ar_{a,a+1}(i-1))\m (\ar_{a,a+1}(j))\\ \nonumber
   &+ \m (\ar_{a,a+1}(j-1))\m (\ar_{a,a+1}(i)).
 \end{align}

\noindent Now, using Theorem \ref{adm} and Corollary \ref{cor1} in equation \eqref{ep32} we get \eqref{arij}.
 
\end{proof}

\begin{remark}\label{rem1}
 
Ciucu and Fischer \cite{ilse}, have a similar result for the number of lozenge tiling of a hexagon with dents on opposite sides (Proposition 4 in their paper). They also make use of 
Kuo's condensation result, Theorem \ref{kk1} and obtain the following identity

\begin{align*}
\opp(a,b,c)_{i,j}&\opp(a-2,b,c)_{i-1, j-1} \\
=& \opp(a-1,b,c)_{i-1,j-1}\opp(a-1,b,c)_{i,j}\\
 &- \opp(a-1,b-1,c+1)_{i,j-1}\opp(a-1,b+1,c-1)_{i-1,j} 
\end{align*}

\noindent where $\opp(a,b,c)_{i,j}$ denotes the number of lozenge tilings of a hexagon $H_{a,b,c}$ with opposite side lengths $a,b,c$ and with two dents in position $i$ and $j$ on 
opposite sides of length $a$, where $a,b,c,i,j$ are positive integers with $1\leq i,j \leq a$. 

In their use of Kuo's result, they take the graph $G$ to be $\opp(a,b,c)_{i,j}$, but 
if we take the graph $G$ to be $H_{a,b,c}$ and use Theorem \ref{kk1} with an appropriate choice of labels, we get the following identity

\begin{align*}
\opp(a,b,c)_{i,j}\hex(a-1,b,c) =& \hex(a,b,c)\opp(a-1,b,c)_{i,j} \\ 
 &+ \hex(a,c-1,b+1,a-1,c,b)_i\hex(a,c,b,a-1,c+1,b-1)_{a-j+1}
\end{align*}

\noindent where $\hex(a,b,c)$ denotes the number of lozenge tilings of the hexagon with opposite sides of length $a,b,c$ and $\hex(m,n,o,p,q,r)_k$ denotes the number of 
lozenge tilings of a hexagon with side lengths $m,n,o,p,q,r$ with a dent at position $k$ on the side of length $m$. Then, Proposition 4 of Ciucu and Fischer \cite{ilse} follows more easily without the need for contigous relations of hypergeometric series that they use in their paper.
 
\end{remark}

\begin{figure}[!htb]
\centering
\includegraphics[scale=.7]{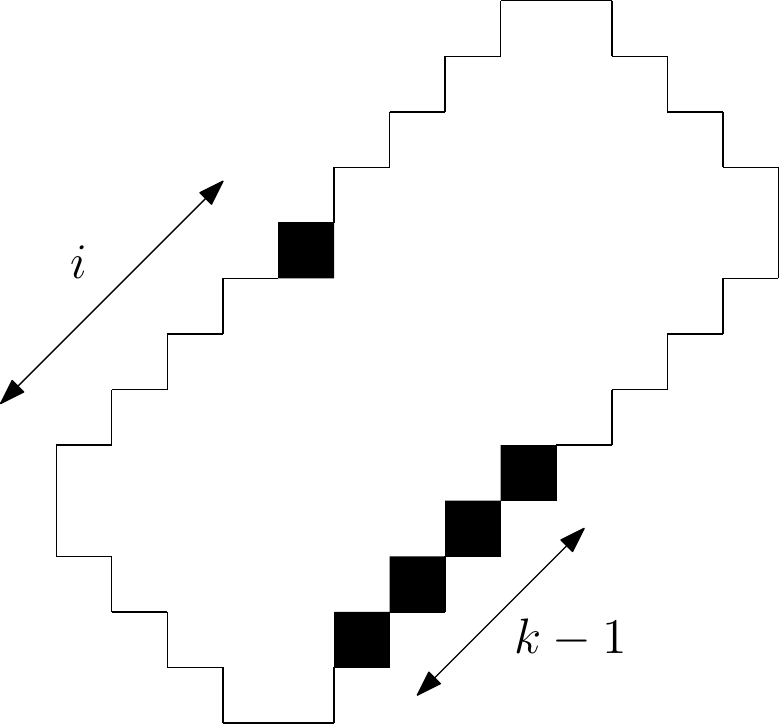}
\caption{An $a\times b$ Aztec rectangle with defects marked in black; here $a=4, b=9. k=5, i=5$}
\label{fig:ar_k-1_i}
\end{figure}

 \begin{proposition}\label{ar_k-1_i}
Let $1\leq a, i \leq b$ be positive integers with $k=b-a>0$, then the number of domino tilings of $\ar_{a,b}(2, 3, \ldots, k)$ with a defect on the northwestern side in the $i$-th position counted from the west corner as shown in the Figure \ref{fig:ar_k-1_i} is given by

 \[2^{a(a+1)/2}\binom{a+k-2}{k-1}\binom{a}{a-i+k}~_3F_2\left[\hyper{1, -k-1, i-a-k}{i-k+1, 2-a-k}\,;-1\right].\]

\end{proposition}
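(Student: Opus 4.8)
The plan is to establish the formula by induction on the difference $k=b-a$, using Kuo's balanced condensation (Theorem \ref{kk1}) for the inductive step and Corollary \ref{cor1} together with Proposition \ref{ar_i_j} for the base. First I would record that the region in question is color-balanced: removing the $k-1$ white squares in positions $2,\ldots,k$ on the southeastern side and the single white dent in the $i$-th position on the northwestern side cancels exactly the white excess $b-a=k$ of $\ar_{a,b}$, so its dual graph has equally many vertices of each color and condensation applies. I would then dispose of the degenerate positions of $i$: when $i$ lies close enough to the western corner that the staircase of removed squares forces a band of dominoes, the region collapses to one of Helfgott--Gessel type and is evaluated directly by Theorem \ref{ar} (or Corollaries \ref{cor1} and \ref{cor2}), and one checks that the terminating $_3F_2$ in the statement reduces to the corresponding single product there. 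For $k=1$ the region is $\ar_{a,a+1}$ with a lone northwestern dent, which by the reflective symmetry of the Aztec rectangle is an $\ar_{a,a+1}(\cdot)$ and hence given by Corollary \ref{cor1}; for $k=2$ it is precisely the region of Proposition \ref{ar_i_j} with the southeastern dent placed at position $2$, so the formula holds there as well.

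For the inductive step I would apply Theorem \ref{kk1} to the dual graph of a suitable balanced region built from $\ar_{a,b}(2,\ldots,k)$, choosing the four vertices $w,x$ (white) and $y,z$ (black) appearing in that cyclic order on a face, placed along the southeastern boundary near the removed staircase, so that the five regions occurring in the identity are all identifiable: the quadruple deletion $G-\{w,x,y,z\}$ is our region with parameters $(k,i)$, one mixed product reproduces the same region with $k$ lowered by one (and $i$ suitably shifted), and the remaining factors are Helfgott--Gessel regions evaluated by Theorem \ref{ar} and Corollary \ref{cor1}. Because Theorem \ref{kk1}, when solved for $\m(G-\{w,x,y,z\})$, carries a minus sign, the recurrence expresses the value for parameter $k$ as a known product minus a known multiple of the $(k-1)$-value; this sign is exactly what forces the evaluation point $z=-1$ in the final hypergeometric series rather than the $z=1$ seen in Proposition \ref{ar_k_i}.

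Unrolling the recurrence down to the base case turns $\m$ of the region into a finite alternating sum of products of the binomial coefficients supplied by Theorem \ref{ar} and Corollary \ref{cor1}. The remaining task is to recognize this sum as the claimed series: factoring out its leading term should produce the prefactor $2^{a(a+1)/2}\binom{a+k-2}{k-1}\binom{a}{a-i+k}$, while the ratio of consecutive summands should simplify to $-\dfrac{(m-k-1)(m+i-a-k)}{(m+i-k+1)(m-a-k+2)}$ (the contributions of the upper parameter $1$ and of $m!$ cancelling), which is precisely the term ratio of $_3F_2\!\left[\hyper{1,-k-1,i-a-k}{i-k+1,2-a-k};-1\right]$. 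I expect the main obstacle to be the combinatorial bookkeeping of the condensation step --- choosing the four vertices so that the recurrence closes on regions of exactly the forms covered by Proposition \ref{ar_i_j} and Theorem \ref{ar}, rather than on some new unevaluated family --- together with matching the unrolled alternating sum to this specific $_3F_2$, in particular aligning the shifts in $i$ and $k$, getting both prefactor binomials correct, and confirming that the two termination conditions ($-k-1$ and $i-a-k$ being nonpositive integers) are compatible with the admissible range $1\le i\le b$.
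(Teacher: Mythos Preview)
Your broad strategy---induction on $k$ via a Kuo condensation identity, with the base cases read off from Corollary~\ref{cor1} and Proposition~\ref{ar_i_j}---matches the paper's. The details, however, diverge in ways that matter.

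The paper does \emph{not} use Theorem~\ref{kk1}. It enlarges the region by appending a strip of unit squares along the northeastern side so as to form an $a\times(b+1)$ Aztec rectangle with the $k-1$ southeastern squares $2,\ldots,k$ still removed; this bigger region has white excess $2$, and the northwestern dent square (position $i$) is \emph{not} yet removed but is instead taken as one of the four marked vertices. Theorem~\ref{kk} (the unbalanced version with $|V_1|=|V_2|+2$ and all four vertices of the same colour) then yields
\[
\m(\ar_{a,b+1,k}^{\,i})=\m(\ar_{a,b,k-1}^{\,i})+Z\cdot \m(\ar_{a,b}(2,\ldots,k+1)),
\]
a \emph{positive} two-term recursion with $Z$ given by a quotient of $\m(\ar_{a,a+1}(\cdot))$ by $\m(\ad(a))$. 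So the argument $z=-1$ in the $_3F_2$ does \emph{not} come from a minus sign in the condensation identity; it comes from the sign pattern of the Pochhammer symbols $(-k-1)_m$, $(i-a-k)_m$, $(2-a-k)_m$, which conspire so that each term of the series is in fact nonnegative. Your heuristic ``the minus sign in Theorem~\ref{kk1} forces $z=-1$'' is therefore a misdiagnosis.

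The more serious gap is that your condensation step is not actually specified. You say only that the four vertices should lie ``along the southeastern boundary near the removed staircase'' and that the five resulting regions ``are all identifiable''. But for Theorem~\ref{kk1} you would need a balanced $G$ already containing the northwestern dent, and with two black and two white marked vertices chosen so that each of $G$, $G-\{w,z\}$, $G-\{x,y\}$, $G-\{w,y\}$, $G-\{x,z\}$, $G-\{w,x,y,z\}$ is one of the previously evaluated families---including the same family with $k$ lowered by one. You have not exhibited such a choice, and it is exactly this placement (which the paper handles by enlarging the region and putting the dent itself among the four vertices) that makes the recursion close. Until you pin down $G$ and $w,x,y,z$ explicitly and check the forced dominoes in each of the five pieces, the inductive step is not established.
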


\begin{proof}
 
Our proof will be by induction on $b=a+k$. The base case of induction will follow if we verify the 
result for $a=2, k=1$ in which case $b=3$. We also need to check the result for $i=1$ and $i=b$. If $i=1$ we have many forced dominoes and we get the region shown in Figure \ref{fig:prop34-2}, which is $\ad(a)$. Again, if $i=b$, then also we get a region of the type in Theorem \ref{ar}.
 In both of these cases the number of domino tilings of these regions satisfy the formula mentioned in the statement. To check our base case it is now enough to verify the formula for $a=2, k=1, i=2$ as the other cases of $i=1$ 
 and $i=3$ are already taken care of. In this case, we see that the region we obtain is of the type as described in Corollary \ref{cor1} and this satisfies the statement of our result. 

\begin{figure}[!htb]
\centering
\includegraphics[scale=.7]{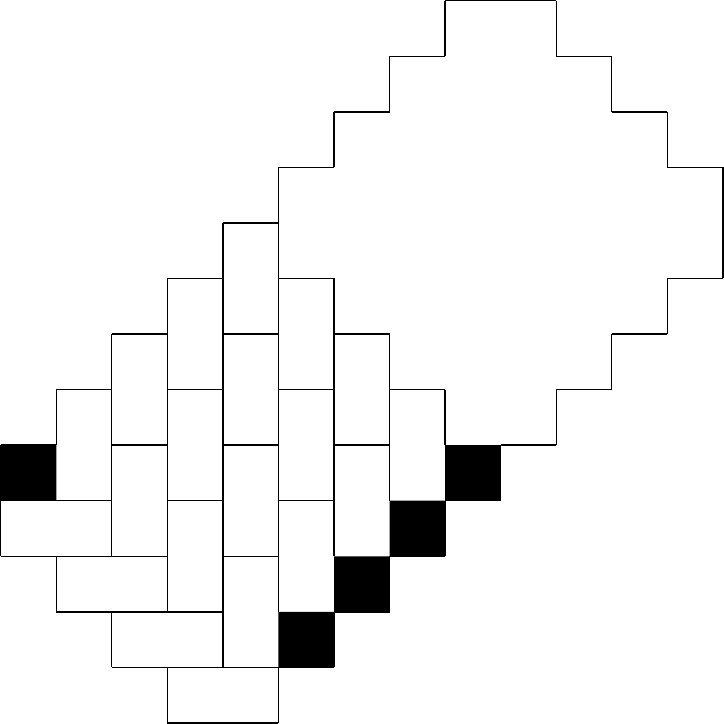}
\caption{Forced tilings for $i=1$ in Proposition \ref{ar_k-1_i}}
\label{fig:prop34-2}
\end{figure}

From now on, we assume $b> 3$ and $1<i<b$. We denote the region of the type shown in Figure \ref{fig:ar_k-1_i} by $\ar_{a,b,k-1}^{i}$. We use Theorem \ref{kk} here, with the vertices $w,x,y$ and $z$ 
marked as shown in Figure \ref{fig:prop34-1}, where we add a series of unit squares to the northeastern side to make it into an $a\times (b+1)$ Aztec rectangle. Note that the square in the 
$i$-th position to be removed is included in this region and is labelled by $z$. The identity we now obtain is the following (see Figure \ref{fig:kuo-2} for forcings)

\begin{figure}[!htb]
\centering
\includegraphics[scale=.7]{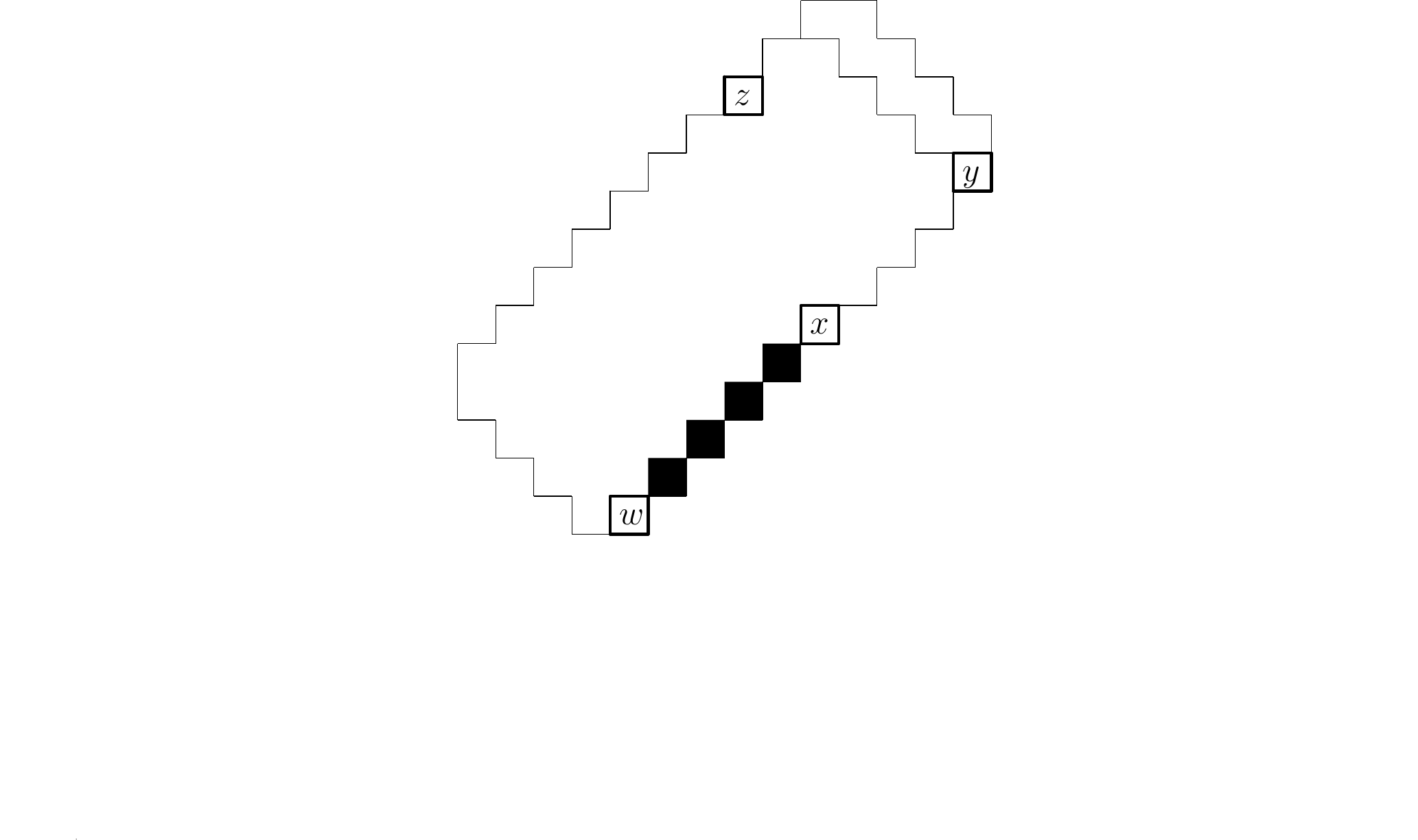}
\caption{Labelled $a\times (b+1)$ Aztec rectangle; here $a=4, b=9$}
\label{fig:prop34-1}
\end{figure}

\begin{equation}\label{ep34}
 \m (\ad(a))\m (\ar_{a,b+1,k}^{i}) = \m (\ad(a))\m (\ar_{a,b,k-1}^{i}) + Y\cdot \m (\ar_{a,b}(2,3,\ldots, k, k+1))
\end{equation}

 \noindent where

 \begin{equation}\label{ep34-1}
Y := \begin{cases} 0, &\text{if } i\leq k\\
\m (\ar_{a,a+1}(a+k+2-i), &\text{if } i\geq k+1\end{cases}.
\end{equation}

\begin{figure}[!htbp]
\minipage{0.50\textwidth}
  \includegraphics[scale=0.5, center]{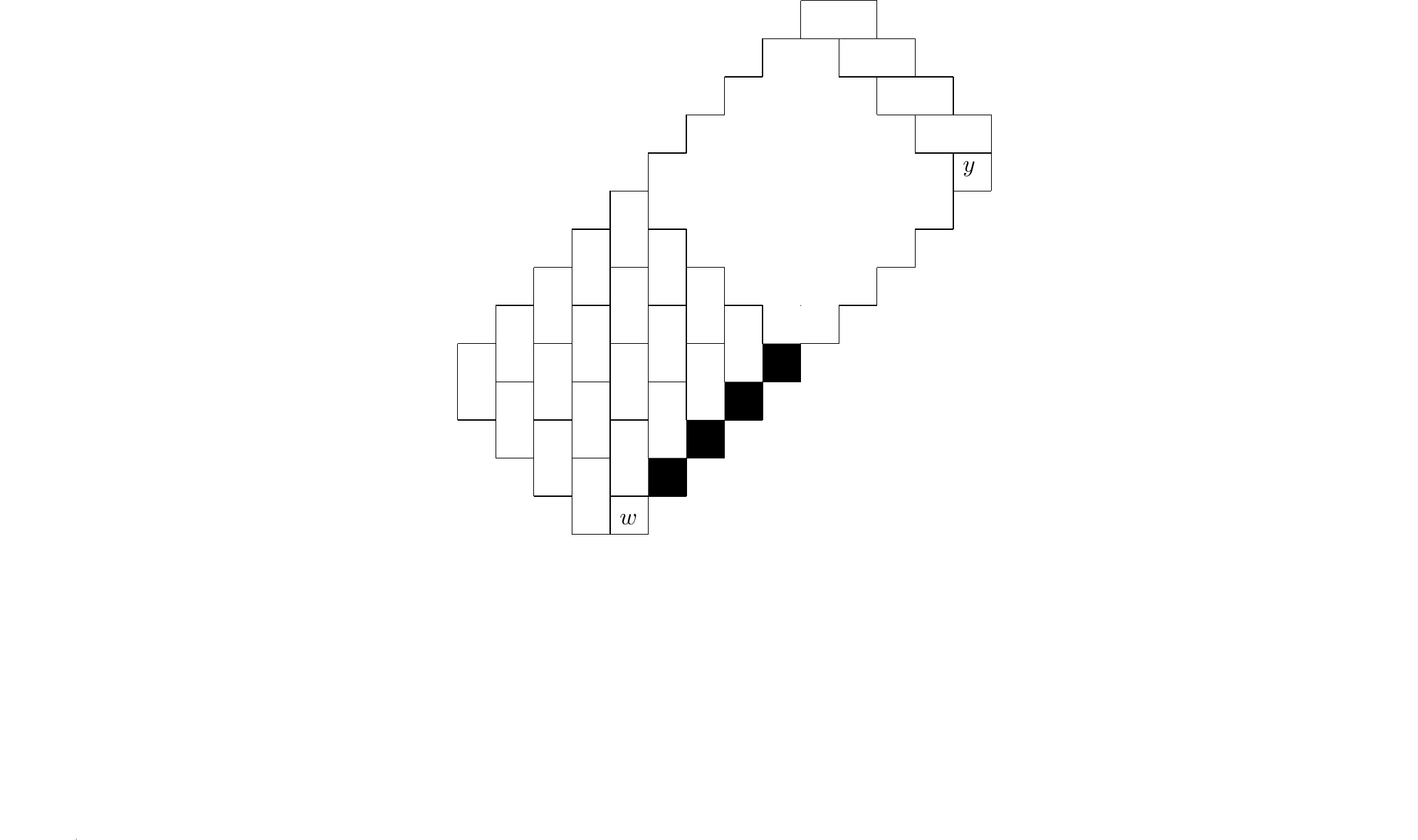}
\endminipage\hfill
\minipage{0.50\textwidth}
  \includegraphics[scale=0.5, center]{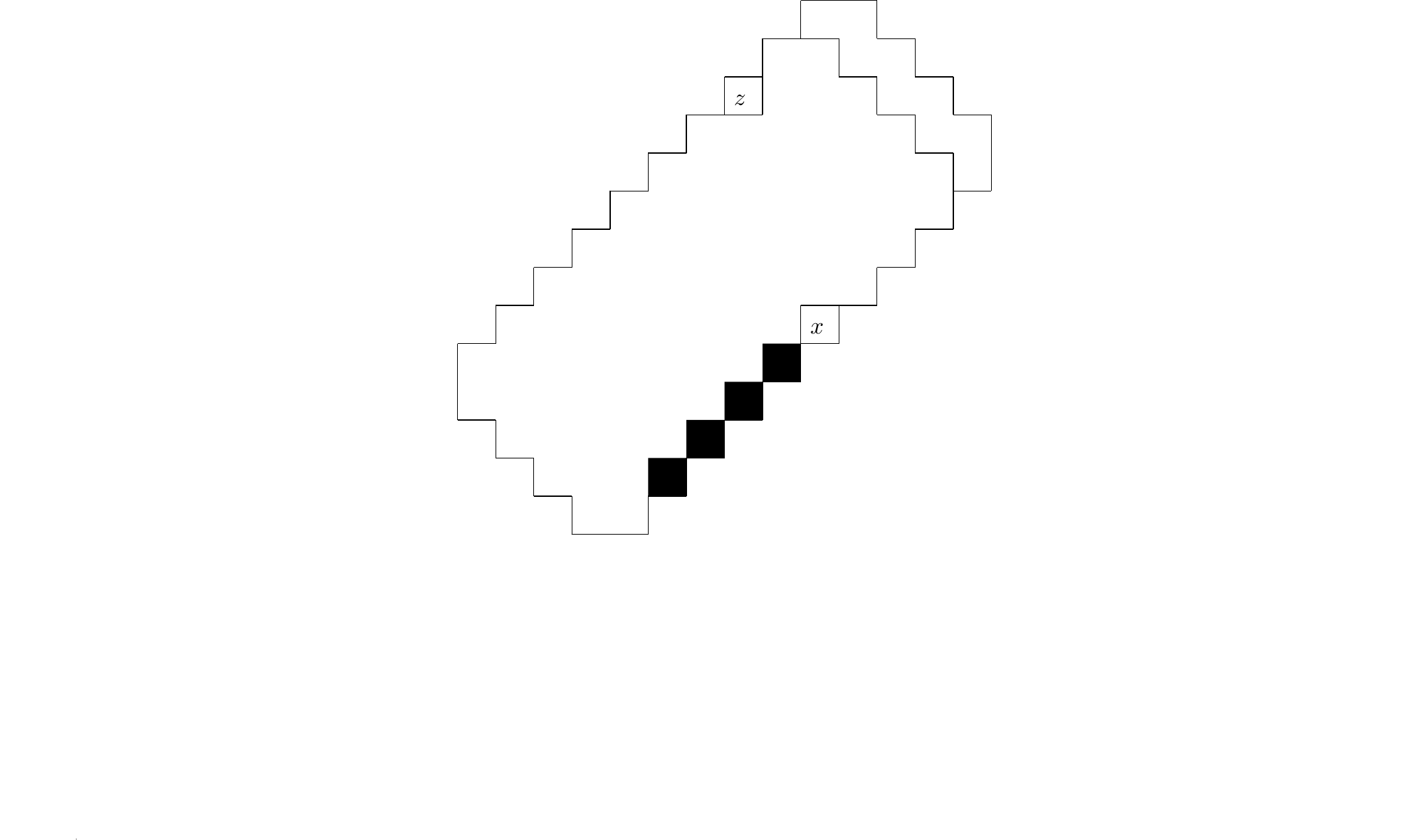}
  \endminipage\hfill
\minipage{0.50\textwidth}
  \includegraphics[scale=0.5, center]{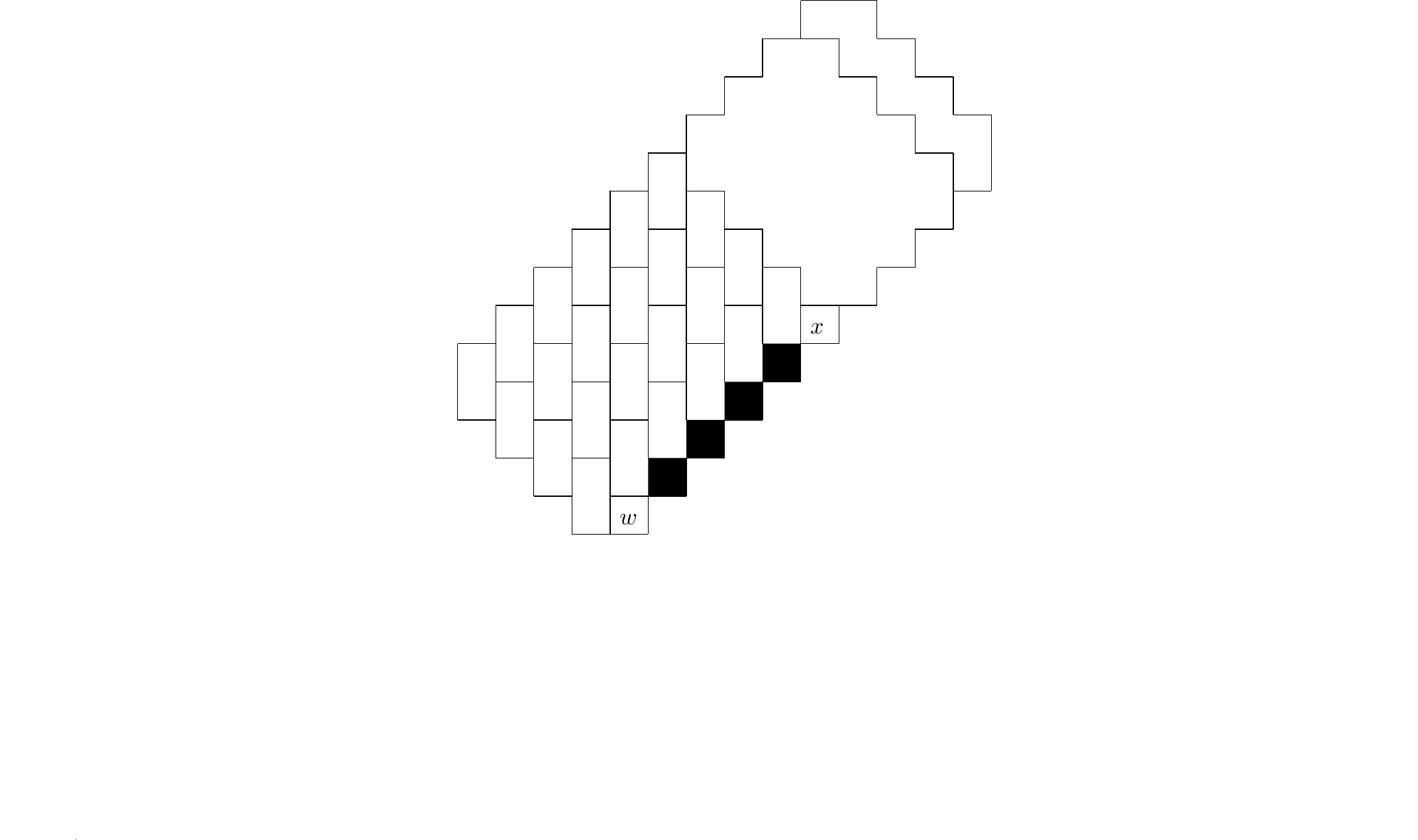}
  \endminipage\hfill
\minipage{0.50\textwidth}
  \includegraphics[scale=0.5, center]{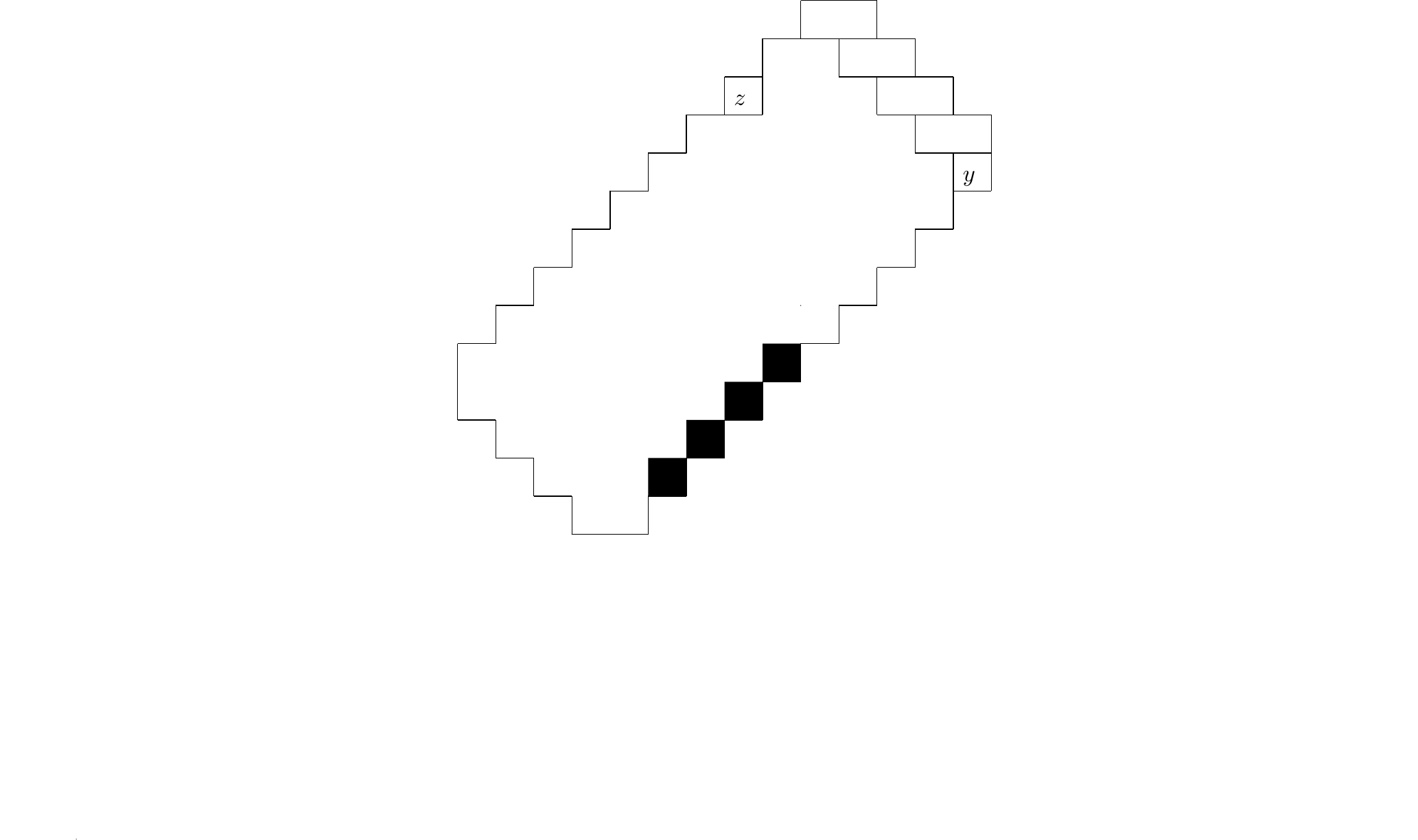}
  \endminipage\hfill
\minipage{0.50\textwidth}
  \includegraphics[scale=0.5, center]{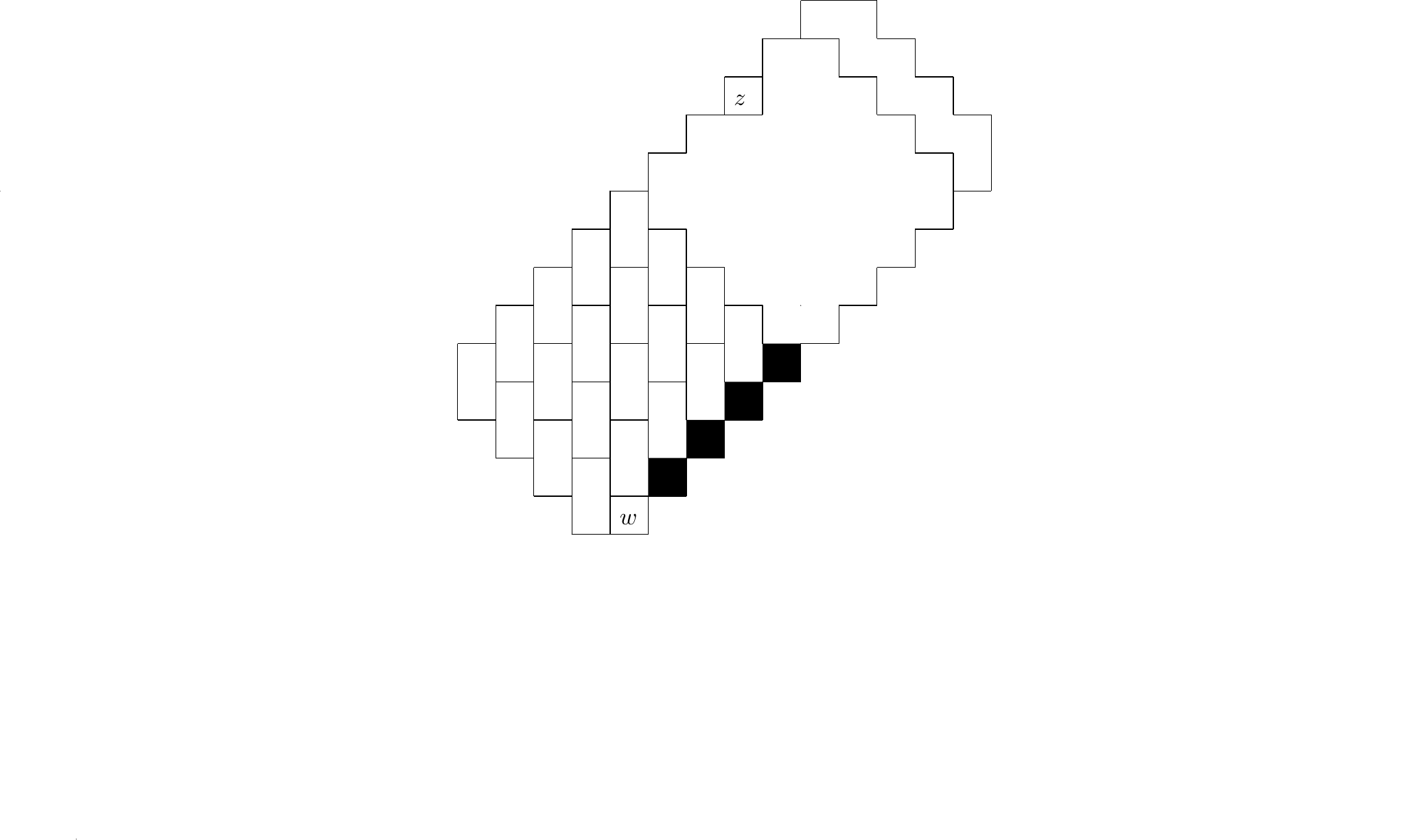}
  \endminipage\hfill
\minipage{0.50\textwidth}
  \includegraphics[scale=0.5, center]{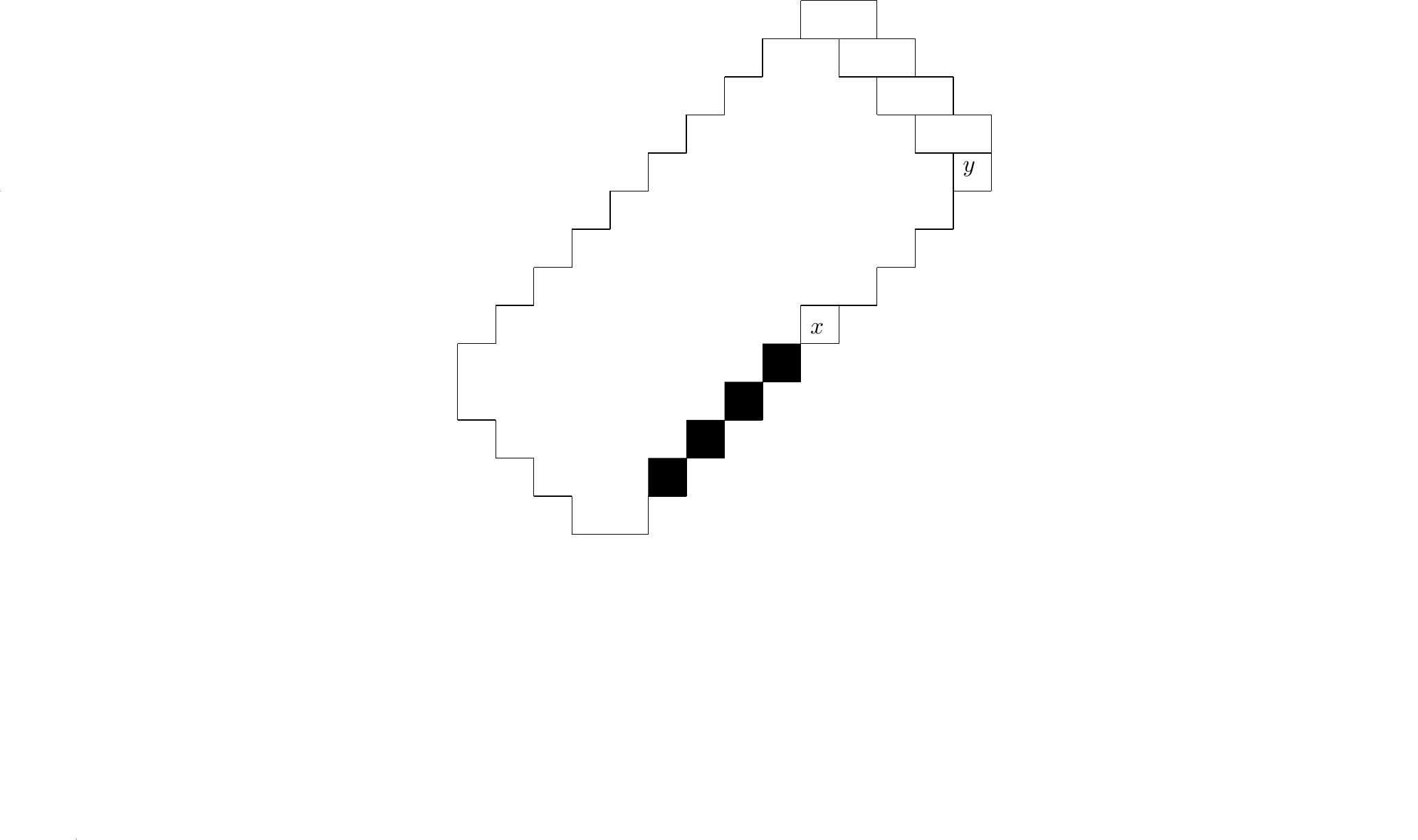}
\endminipage
\caption{Forced dominoes in the proof of Proposition \ref{ar_k-1_i} where the vertices we remove are labelled}
\label{fig:kuo-2}
\end{figure}

Using equation \eqref{ep34-1} in equation \eqref{ep34}, we can simplify the relation further to the following

\begin{equation}\label{ep34-m}
 \m (\ar_{a,b+1,k}^{i})=\m (\ar_{a,b,k-1}^{i})+Z\cdot \m (\ar_{a,b}(2,3,\ldots, k+1))
\end{equation}

\noindent where

 \begin{equation}\label{ep34-m-1}
Z := \begin{cases} 0, &\text{if } i\leq k\\
\dfrac{\m (\ar_{a,a+1}(a+k+2-i)}{\m (\ad(a))}, &\text{if } i\geq k+1\end{cases}.
\end{equation}

It now remains to show that the expression in the statement satisfies equation \eqref{ep34-1}. This is now a straightforward application of the induction hypothesis and some 
algebraic manipulation.
\end{proof}

\begin{figure}[!htb]
\centering
\includegraphics[scale=.7]{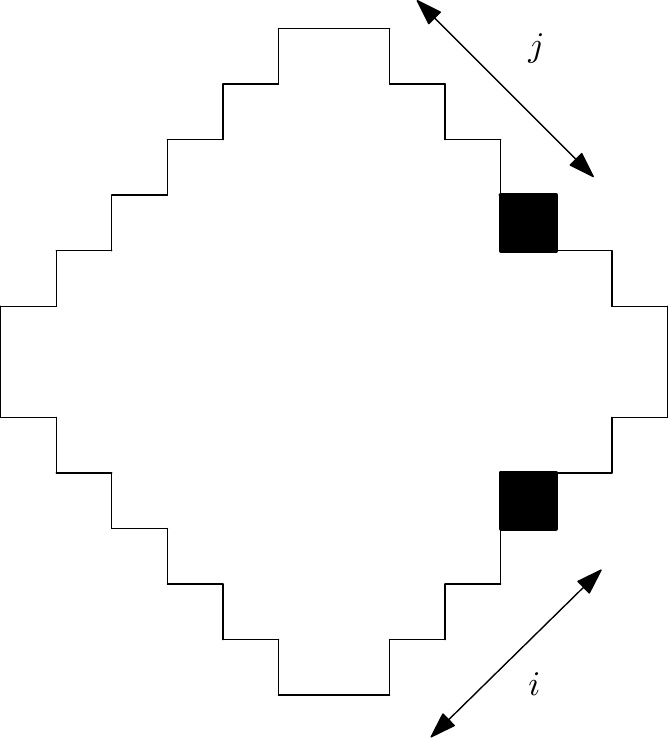}
\caption{Aztec Diamond with defects on adjacent sides; here $a=6$, $i=4$, $j=4$}
\label{fig:ad-i-j-n}
\end{figure}

 \begin{proposition}\label{ad_i_j}
  
 Let $a, i, j$ be positive integers such that $1\leq i, j\leq a$, then the number of domino tilings of $\ad(a)$ with one defect on the southeastern side at the $i$-th position counted from the south corner and one defect on the northeastern side on the $j$-th position counted from the north corner 
 as shown in Figure \ref{fig:ad-i-j-n} is given by 
 
\[ 2^{a(a-1)/2}\binom{a-1}{i-1}\binom{a-1}{j-1}~_3F_2\left[\hyper{1, 1-i, 1-j}{1-a, 1-a}\,;2\right].\]

 \end{proposition}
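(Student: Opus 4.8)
The plan is to follow the template of Propositions~\ref{ar_k_i} and \ref{ar_k-1_i}: extract a recurrence from a single application of Kuo condensation, and then finish by induction, verifying that the claimed hypergeometric expression satisfies it. Write $N(a,i,j)$ for the quantity in the statement, i.e.\ for $\m$ of the region in Figure~\ref{fig:ad-i-j-n}. Since the two deleted squares are of opposite colours (one white, one black, as the two defects are of types $\al$ and $\be$), the diamond $\ad(a)$ with these defects removed is balanced, so $N(a,i,j)$ is a genuine tiling count and Kuo's balanced condensation result applies.

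First I would dispose of the boundary cases $i\in\{1,a\}$ or $j\in\{1,a\}$, which also serve as the base of the induction. In each of these a run of dominoes along the relevant side is forced and the region collapses to an Aztec rectangle $\ar_{a-1,a}$ carrying a single dent, whose count is supplied by Theorem~\ref{ar} and Corollary~\ref{cor1}. On the formula side, once $i=1$ the Pochhammer factor $(1-i)_m$ vanishes for every $m\ge 1$, so the ${}_3F_2$ degenerates to $1$ and the value becomes $2^{a(a-1)/2}\binom{a-1}{j-1}=\m(\ar_{a-1,a}(j))$, matching the forcing; the cases $j=1$ and $i=a,\ j=a$ follow by the symmetric reduction.

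For the generic range $1<i,j<a$ I would apply Kuo's Theorem~\ref{kk1} to $G=\ad(a)$ (balanced, as required) with four boundary vertices in cyclic order $w,x,y,z$ clustered at the east corner: $w$ and $z$ the two prescribed defects (say $w$ white, $z$ black), and $x$ (white), $y$ (black) an adjacent pair pressed against the east corner between them. Reading the identity
\begin{equation*}
\m(G\setminus\{w,z\})\,\m(G\setminus\{x,y\})=\m(G)\,\m(G\setminus\{w,x,y,z\})+\m(G\setminus\{w,y\})\,\m(G\setminus\{x,z\})
\end{equation*}
term by term via forced dominoes: $G\setminus\{w,z\}$ is the target, with count $N(a,i,j)$; deleting the corner pair forces a cascade turning $G\setminus\{x,y\}$ into $\ad(a-1)$ and $G\setminus\{w,x,y,z\}$ into $\ad(a-1)$ carrying defects in the shifted positions $i-1,j-1$, i.e.\ into $N(a-1,i-1,j-1)$; and each mixed deletion $G\setminus\{w,y\}$, $G\setminus\{x,z\}$ collapses to an $\ar_{a-1,a}$ with one dent evaluated by Corollary~\ref{cor1}. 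Using $\m(\ad(a))=2^{a(a+1)/2}$ and $\m(\ad(a-1))=2^{a(a-1)/2}$ and solving for the target, this rearranges into
\begin{equation*}
N(a,i,j)=2^{a(a-1)/2}\binom{a-1}{i-1}\binom{a-1}{j-1}+2^{a}\,N(a-1,i-1,j-1).
\end{equation*}

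Finally I would check that the stated formula obeys this recurrence. The cleanest route is to convert the ${}_3F_2$ into the purely binomial form
\begin{equation*}
N(a,i,j)=2^{a(a-1)/2}\sum_{m\ge 0}2^{m}\binom{a-1-m}{i-1-m}\binom{a-1-m}{j-1-m},
\end{equation*}
using the elementary collapse $\binom{a-1}{i-1}(1-i)_m/(1-a)_m=\binom{a-1-m}{i-1-m}$ (and likewise in $j$) to absorb the two lower parameters $1-a$. In this guise the recurrence is transparent: splitting off the $m=0$ term of the sum for $(a,i,j)$ produces exactly the binomial product, while reindexing the tail by $m\mapsto m-1$ leaves precisely $2^{a}$ times the sum for $(a-1,i-1,j-1)$, which is the induction step. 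The main obstacle will be the geometric bookkeeping of the third paragraph — choosing $x,y$ so that the four-vertex deletion genuinely forces down to $\ad(a-1)$ with the defects shifted to $i-1,j-1$ (and not to some other positions), and confirming that the two mixed terms land on the single-dent rectangles with the arguments needed to produce the binomial factors above. Once the regions are pinned down, the hypergeometric verification is the routine reindexing just described.
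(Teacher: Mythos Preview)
Your proposal is correct and follows essentially the same route as the paper: both argue by induction on $a$, handle the edge cases $i,j\in\{1,a\}$ via forced dominoes reducing to $\ar_{a-1,a}$ with a single dent, apply Theorem~\ref{kk1} to $\ad(a)$ with two auxiliary vertices at the east corner to obtain the recurrence $N(a,i,j)=2^{a}\,N(a-1,i-1,j-1)+2^{a(a-1)/2}\binom{a-1}{i-1}\binom{a-1}{j-1}$ (this is exactly equation~\eqref{ll}), and then verify the claimed formula satisfies it. Your explicit rewriting of the ${}_3F_2$ as the binomial sum and the $m=0$/tail split is a clean way to carry out the final verification that the paper leaves implicit.
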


\begin{figure}[!htb]
\centering
\includegraphics[scale=.7]{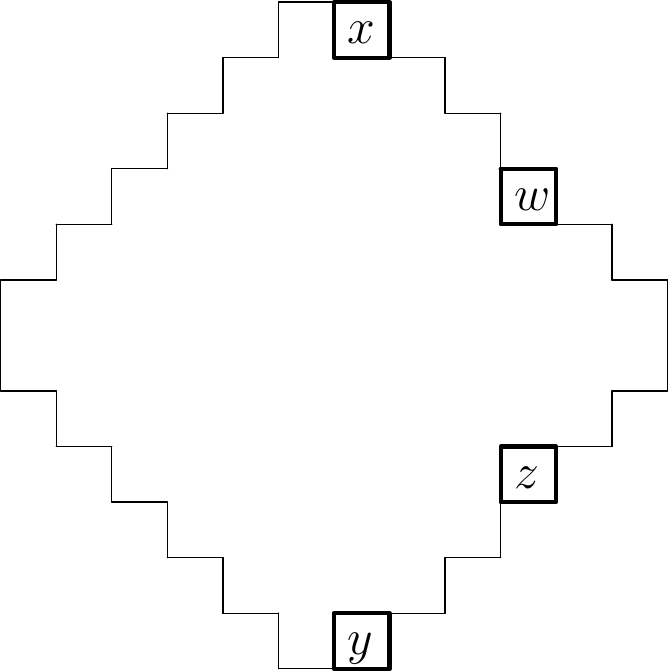}
\caption{Aztec Diamond with some labelled squares; here $a=6$}
\label{fig:ad_i_j}
\end{figure}

\begin{proof}

 We use induction with respect to $a$. The base case of induction is $a=2$. We would also need to check for $i=1,j=1,i=a$ and $j=a$ separately.

  If $a=2$, then the only possibilities are $i=1$ or $i=a$ and $j=1$ or $j=a$, so we do not have to consider this case, once we consider the other mentioned cases.

 We now note that when either $i$ or $j$ is $1$ or $a$, some dominoes are forced in any tiling 
 and hence we are reduced to an Aztec rectangle of size $(a-1)\times a$. It is easy to see that our formula is correct for this.

 In the rest of the proof we assume $a\geq 3$ and $1<i,j<a$. Let us now denote the region we are interested in this proposition as $\ad_a(i,j)$. Using the dual graph of this region and applying 
 Theorem \ref{kk1} with the vertices as labelled in Figure \ref{fig:ad_i_j} we obtain the following identity (see Figure \ref{fig:kuo-3} for details),

 \begin{align}\label{ep35}
    \m (\ad_a(i,j))\m (\ad(a-1)) =& \m (\ad(a))\m (\ad_{a-1}(i-1,j-1))\\ \nonumber
     &+ \m (\ar_{a-1,a}(j))\m (\ar_{a-1,a}(i)).
 \end{align}

\begin{figure}[!htb]
%\minipage{0.30\textwidth}
 % \includegraphics[width=\linewidth]{}
%\endminipage\hfill
\minipage{0.50\textwidth}
  \includegraphics[scale=0.6, center]{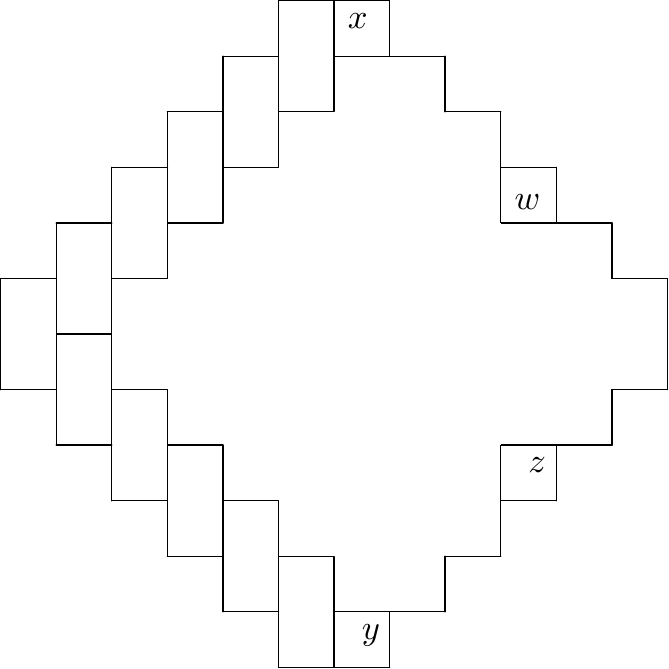}
  \endminipage\hfill
\minipage{0.50\textwidth}
  \includegraphics[scale=0.6, right]{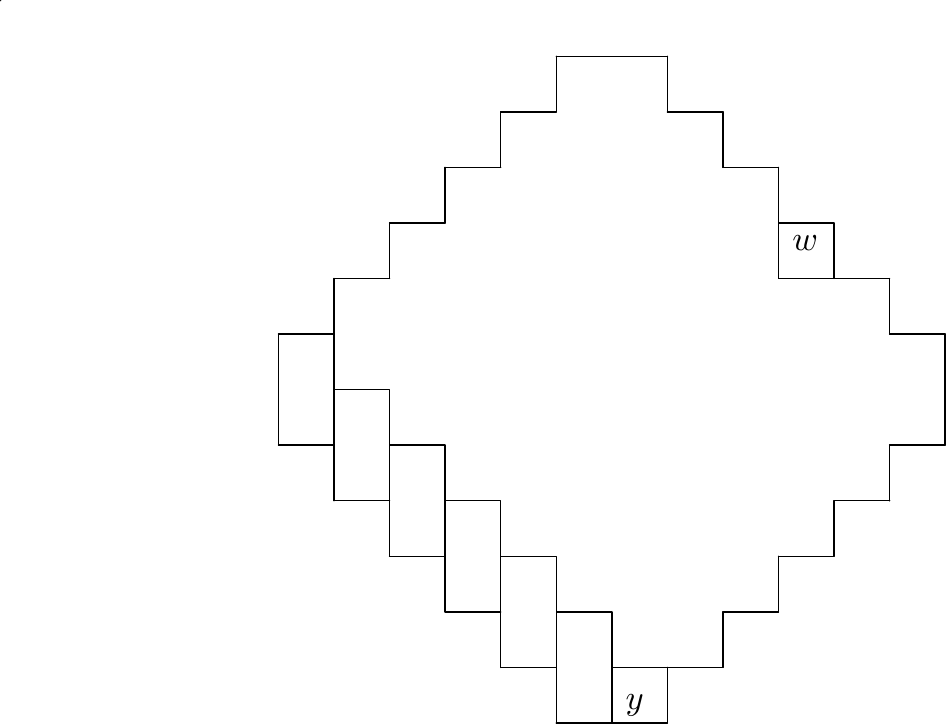}
  \endminipage\hfill
  \vspace{4mm}
\minipage{0.50\textwidth}
  \includegraphics[scale=0.6, left]{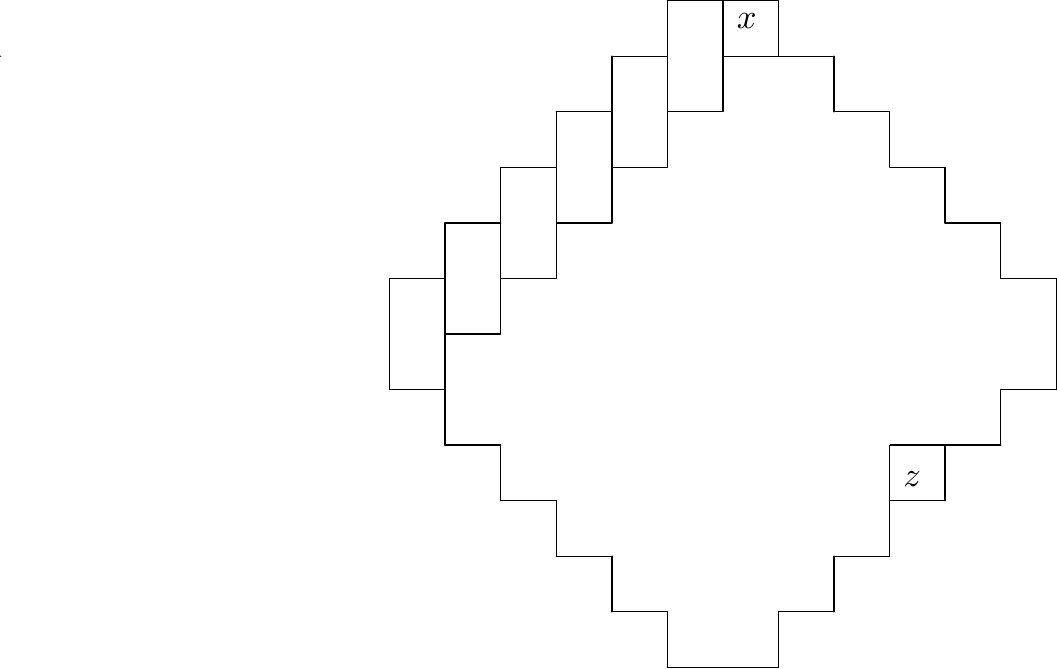}
  \endminipage\hfill
\minipage{0.50\textwidth}
  %\includegraphics[width=\linewidth]{}
 % \endminipage\hfill
%\minipage{0.30\textwidth}
  \includegraphics[scale=0.6, center]{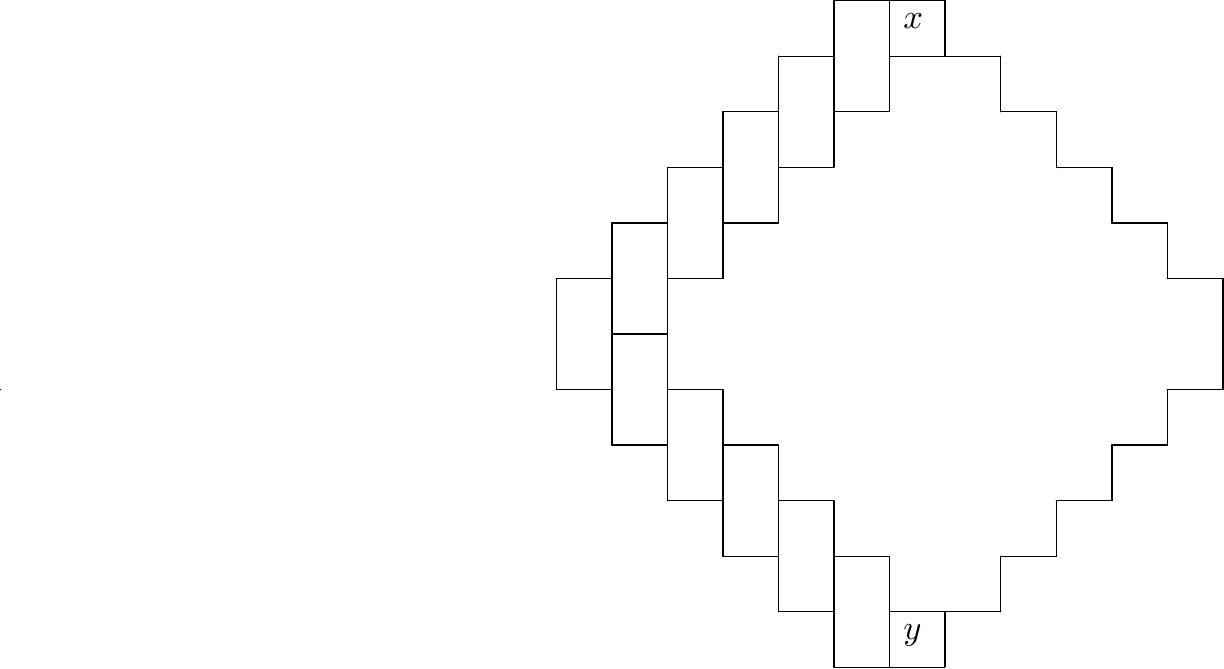}
\endminipage
\caption{Forced dominoes in the proof of Proposition \ref{ad_i_j} where the vertices we remove are labelled}
\label{fig:kuo-3}
\end{figure}

 \noindent Simplifying equation \eqref{ep35}, we get the following
 
 \begin{equation}\label{ll}
 \m (\ad_a(i,j))=2^a\m (\ad_{a-1}(i-1,j-1))+2^{a(a-1)/2}\binom{a-1}{j-1}\binom{a-1}{i-1}
 \end{equation}
 \noindent where we used Theorem \ref{adm} and Corollary \ref{cor1}.

 Now, using our inductive hypothesis on equation \eqref{ll} we see that we get the expression in the proposition.
\end{proof}

\begin{remark}
 Ciucu and Fischer \cite{ilse}, have a similar result for the number of lozenge tiling of a hexagon with dents on adjacent sides (Proposition 3 in their paper). They make use of the 
 following result of Kuo \cite{kuo}.
 
 \begin{theorem}[Theorem 2.1]\cite{kuo}\label{kj}
  Let $G=(V_1, V_2, E)$ be a plane bipartite graph with $\abs{V_1}=\abs{V_2}$ and $w,x,y,z$ be vertices of $G$ that appear in cyclic order on a face of $G$. If $w,y \in V_1$ 
  and $x,z\in V_2$ then 

\[   \m(G)\m(G-\{w,x,y,z\})=\m(G-\{w,x\})\m(G-\{y,z\})+\m(G-\{w,z\})\m(G-\{x,y\}).\]

 \end{theorem}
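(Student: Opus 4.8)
The plan is to prove this identity by the superposition-of-matchings technique, in close parallel with the proof of Proposition \ref{ck3} above. First I would recast each of the three products as the total weight of a set of pairs of matchings: to $\m(G)\m(G-\{w,x,y,z\})$ I associate $\mm(G)\times\mm(G-\{w,x,y,z\})$, and the two products on the right become $\mm(G-\{w,x\})\times\mm(G-\{y,z\})$ and $\mm(G-\{w,z\})\times\mm(G-\{x,y\})$, where a pair $(\mu,\nu)$ is given the weight equal to the product of the weights of $\mu$ and $\nu$. As before, I draw $\mu$ with solid edges and $\nu$ with dotted edges on a single copy of $G$; the goal is then a weight-preserving bijection between the left-hand set and the disjoint union of the two right-hand sets.

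Next I would analyze the superposition $\mu\cup\nu$. In every one of the four sets above, a count of degrees shows that $w,x,y,z$ are each incident to exactly one edge of $\mu\cup\nu$ while every other vertex is incident to exactly two; hence $\mu\cup\nu$ is a disjoint union of cycles, doubled edges, and exactly two vertex-disjoint paths whose endpoints are precisely $w,x,y,z$. The heart of the argument is to determine how these four vertices pair up as path endpoints, and here I would combine two constraints. Bipartiteness forces a parity condition: a path alternates solid and dotted edges, so its two endpoints carry edges of the same type exactly when the path has odd length, which (since every edge joins $V_1$ to $V_2$) happens exactly when its endpoints lie in opposite colour classes. Recording, for each of the four sets, which of $w,x,y,z$ is covered by the solid versus the dotted matching, this parity test cuts the admissible pairings down to two per set. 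Planarity then removes one more: because $w,x,y,z$ occur in this cyclic order on a face, two disjoint paths cannot realize the diagonal pairing $\{w\text{--}y,\ x\text{--}z\}$, since such paths would have interleaved endpoints on the face boundary and must cross by a Jordan-curve argument.

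The outcome I expect is that the left-hand set splits, according to its path pairing, into the classes $\{w\text{--}x,\ y\text{--}z\}$ and $\{w\text{--}z,\ x\text{--}y\}$, while $\mm(G-\{w,x\})\times\mm(G-\{y,z\})$ realizes only the first pairing and $\mm(G-\{w,z\})\times\mm(G-\{x,y\})$ only the second. I would then define the bijection by reversing solid and dotted edges along a single path: on the first class I swap colours along the $w\text{--}x$ path, leaving the $y\text{--}z$ path untouched, and on the second class I swap along the $w\text{--}z$ path. A short bookkeeping check shows that after swapping the $w\text{--}x$ path the new solid matching omits exactly $w,x$ and the new dotted matching omits exactly $y,z$, so the image lies in $\mm(G-\{w,x\})\times\mm(G-\{y,z\})$; the second case is symmetric. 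Each map is its own partner under re-swapping the same path, hence a bijection, and it preserves the multiset of edges and therefore the weight. Summing weights over the two classes then yields the claimed identity.

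Finally, the step I expect to be the main obstacle is the pairing classification: getting the parity bookkeeping exactly right for all four matching-sets simultaneously, and then invoking planarity cleanly to discard the diagonal pairing, so that each surviving class of the left-hand set matches precisely one right-hand product. Once that correspondence is pinned down, verifying that the single-path colour swap lands in the correct set and is weight-preserving is routine.
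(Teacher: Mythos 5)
The paper itself does not prove this statement---it is quoted from Kuo's paper \cite{kuo}---so the only internal benchmark is the paper's proof of Proposition~\ref{ck3}, which uses precisely the superposition-and-path-reversal technique you propose, and your argument matches it (and Kuo's original proof) in all essentials. Your proof is correct: parity eliminates the diagonal pairing $\{w\text{--}y,\,x\text{--}z\}$ from the left-hand set, planarity eliminates it from the two right-hand sets, and the single-path colour swap gives the weight-preserving bijection; the only blemish is the phrase that planarity ``removes one more'' pairing \emph{per set}, which is accurate only for the two right-hand sets (for the left-hand set, where both end-edges at $w,x,y,z$ are solid, parity alone already leaves exactly the two non-crossing pairings), but your subsequent classification paragraph states the correct outcome.
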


 \noindent They obtain the following identity
 
 \begin{align*}
  \adj(a,b,c)_{j,k}&\adj(a-1,b,c-1)_{j,k}\\
   =& \adj(a,b,c-1)_{j,k}\adj(a-1,b,c)_{j,k}\\
    &+ \adj(a-1,b+1,c-1)_{j,k}\adj(a,b-1,c)_{j,k}
 \end{align*}
 
 \noindent where $\adj(a,b,c)_{j,k}$ denotes the number of lozenge tilings of a hexagon $H_{a,b,c}$ with opposite side lengths $a,b,c$ with two dents on adjacent sides of length 
 $a$ and $c$ in positions $j$ and $k$ respectively, where $a,b,c,j,k$ are non-negative integers with $1\leq j\leq a$ and $1\leq k\leq c$. 
 
 In their use of Theorem \ref{kj}, they 
 take the graph $G$ to be $\adj(a,b,c)_{j,k}$, but if we take the graph $G$ to be $H_{a,b,c}$ and use Theorem \ref{kk1} with an appropriate choice of labels we obtain the following 
 identity
 
 \begin{align*}
  \hex(a-1,b,c)\adj(a,b,c)_{j,k} &= \hex(a,b,c)\adj(a-1,b,c)_{j,k} \\
   &+ \hex(c,a-1,b+1, c-1, a,b)_k\hex(b-1, c+1, a-1, b,c,a)_j
 \end{align*}

\noindent with the same notations as in Remark \ref{rem1}. Then, Proposition 3 of Ciucu and Fischer \cite{ilse} follows more easily without the need for contigous relations of hypergeometric series that they use in their paper.

\end{remark}

\section{Proofs of the main results}\label{s4}

\begin{proof}[Proof of Theorem \ref{mt1}]

\begin{figure}[!htb]
\centering
\includegraphics[scale=.6]{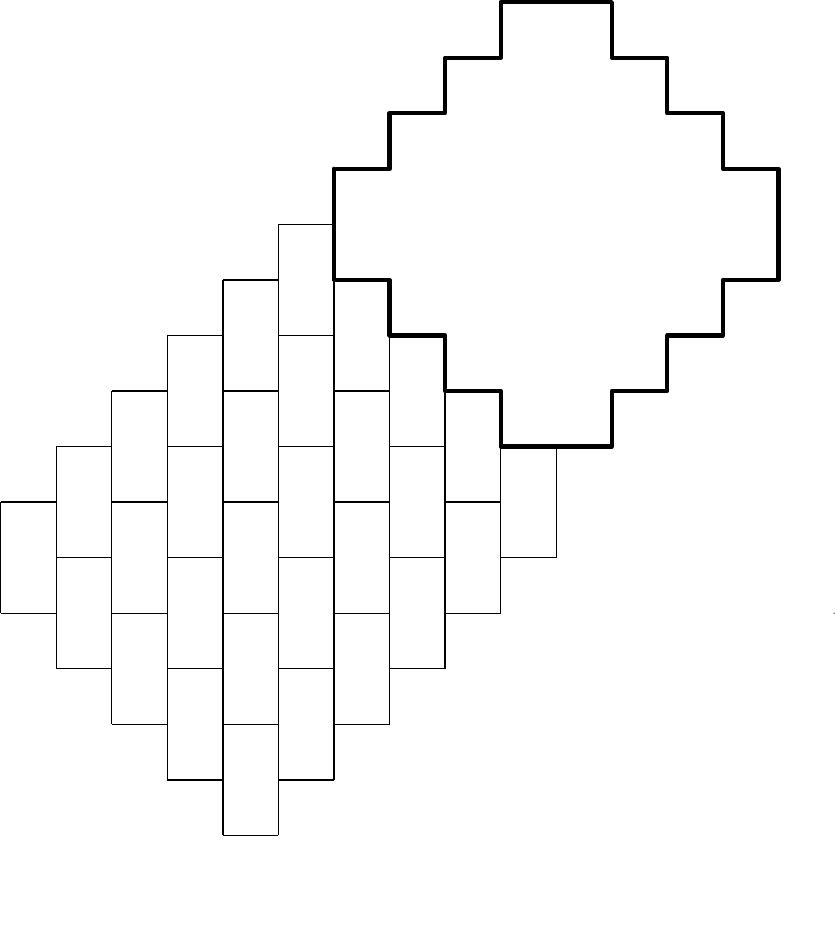}
\caption{Removing the forced dominoes from $\arr$; here $a=5, b=10$, $k=5$}
\label{fig:mt11}
\end{figure}

We shall apply the formula in Theorem \ref{condensation} to the planar dual graph of our region $\arr$, and the vertices $\delta_1, \ldots, \delta_{2n+2k}$. Then the left hand side 
of equation \eqref{ciucu2} becomes the left hand side of equation \eqref{em1}, and the right hand side of equation \eqref{ciucu2} becomes the right hand side of \eqref{em1}. We 
just need to verify that the quantities expressed in equation \eqref{em1} are indeed given by the formulas described in the statement of Theorem \ref{mt1}.

The first statement follows immediately by noting that the added squares on the south eastern side of $\arr$ forces some domino tilings. After removing this forced dominoes we are left 
with an Aztec Diamond of order $a$ as shown in Figure \ref{fig:mt11}, whose number of tilings is given by Theorem \ref{adm}.

\begin{figure}[!htb]
\minipage{0.50\textwidth}
  \includegraphics[scale=0.6, center]{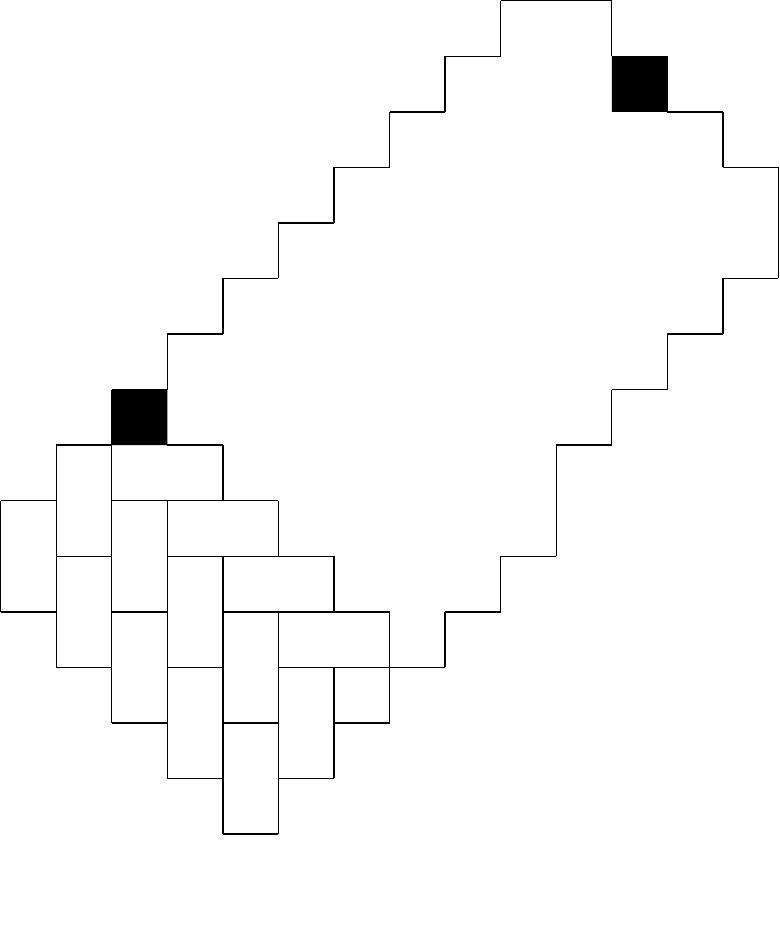}
\endminipage\hfill
\minipage{0.50\textwidth}
  \includegraphics[scale=0.6, center]{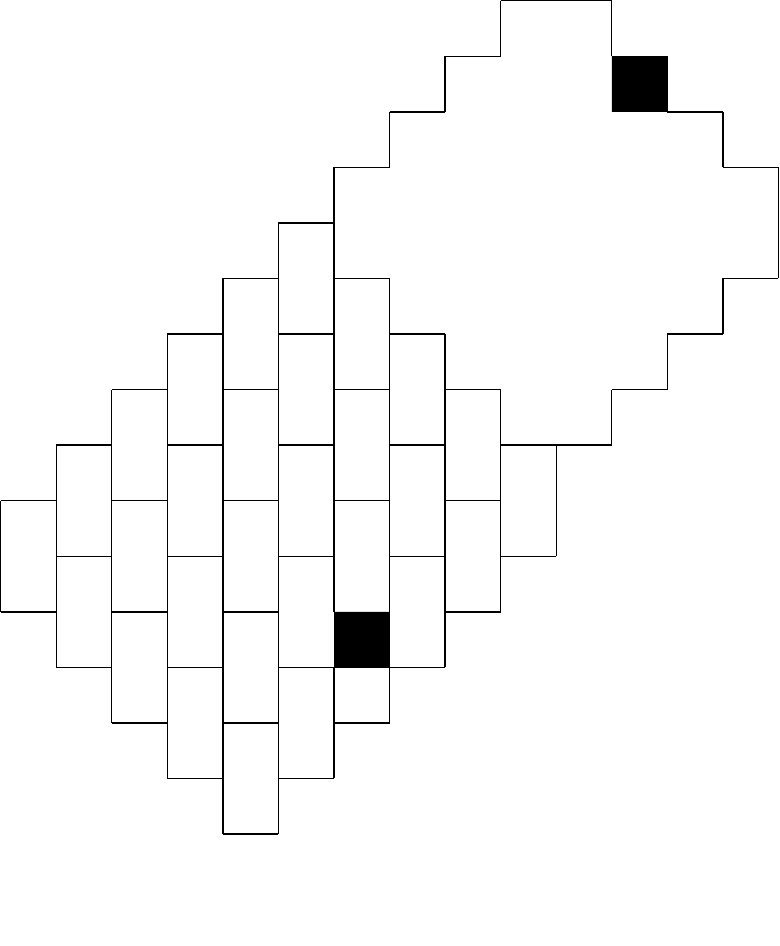}
\endminipage
\caption{Choices of $\al$-defects that lead to no tiling of $\arr$}
\label{fig:mt12}
\end{figure}

\begin{figure}[!htb]
\centering
\includegraphics[scale=.6]{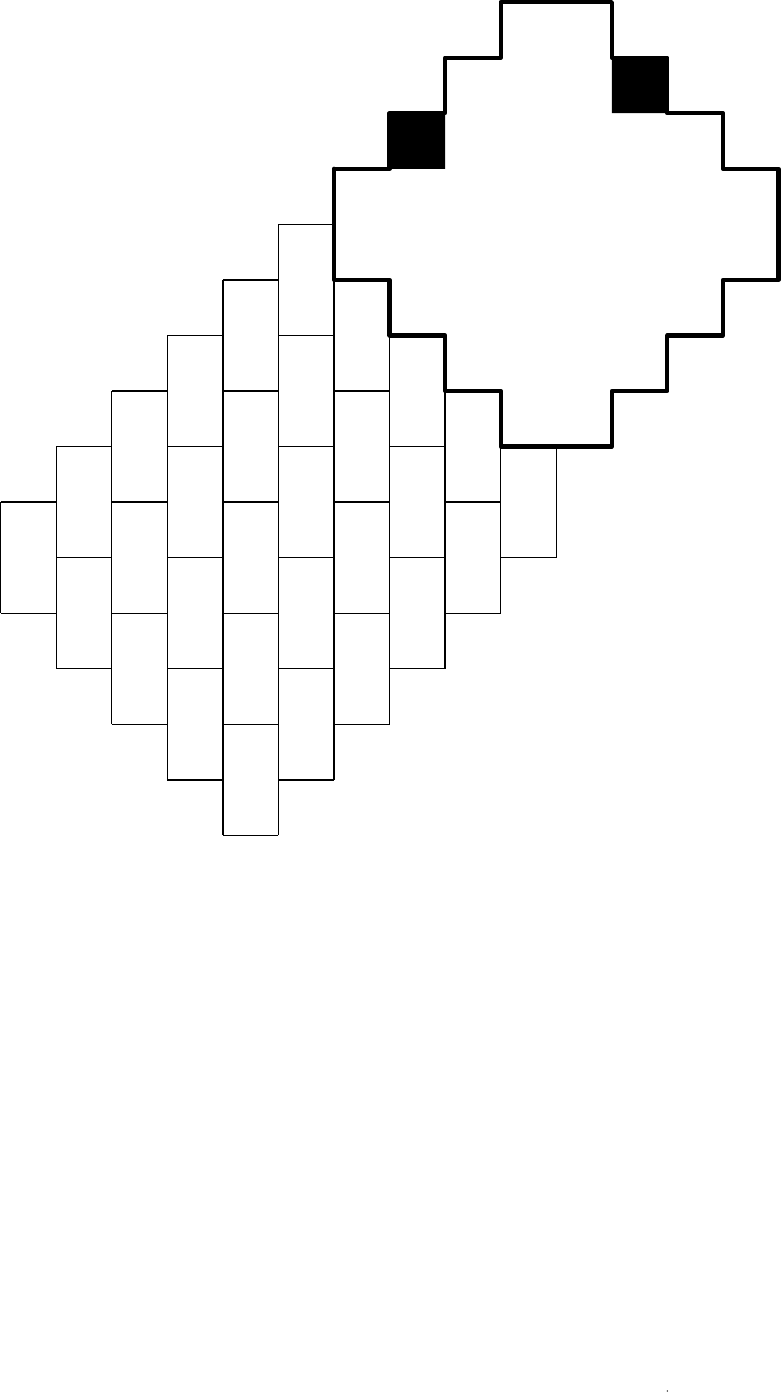}
\caption{Choice of $\al$-defect, not sharing an edge with some $\ga_l$}
\label{fig:mt13}
\end{figure}

The possibilities in the second statement are as follows. If an $\al$ square shares an edge with some $\ga_l$, then the region cannot be covered by any domino as illustrated in the right image of Figure \ref{fig:mt12}. 
Again, if $\al_i$ is on the northwestern side at a distance of atmost $k$ from the western corner, then the strips of forced dominoes along the sourthwestern side interfere with the $\al_i$ and hence there 
cannot be any tiling in this case as illustrated in the left image of Figure \ref{fig:mt12}. If neither of these situation is the case, then due to the squares $\ga_1, \ldots, \ga_k$ on the southeastern side, there are forced dominoes as shown in 
Figure \ref{fig:mt13} and then $\al_i$ and $\be_j$ are defects on an Aztec Diamond on adjacent sides and then the second statement follows from Proposition \ref{ad_i_j}.

\begin{figure}[!htb]
\minipage{0.50\textwidth}
  \includegraphics[scale=0.6, center]{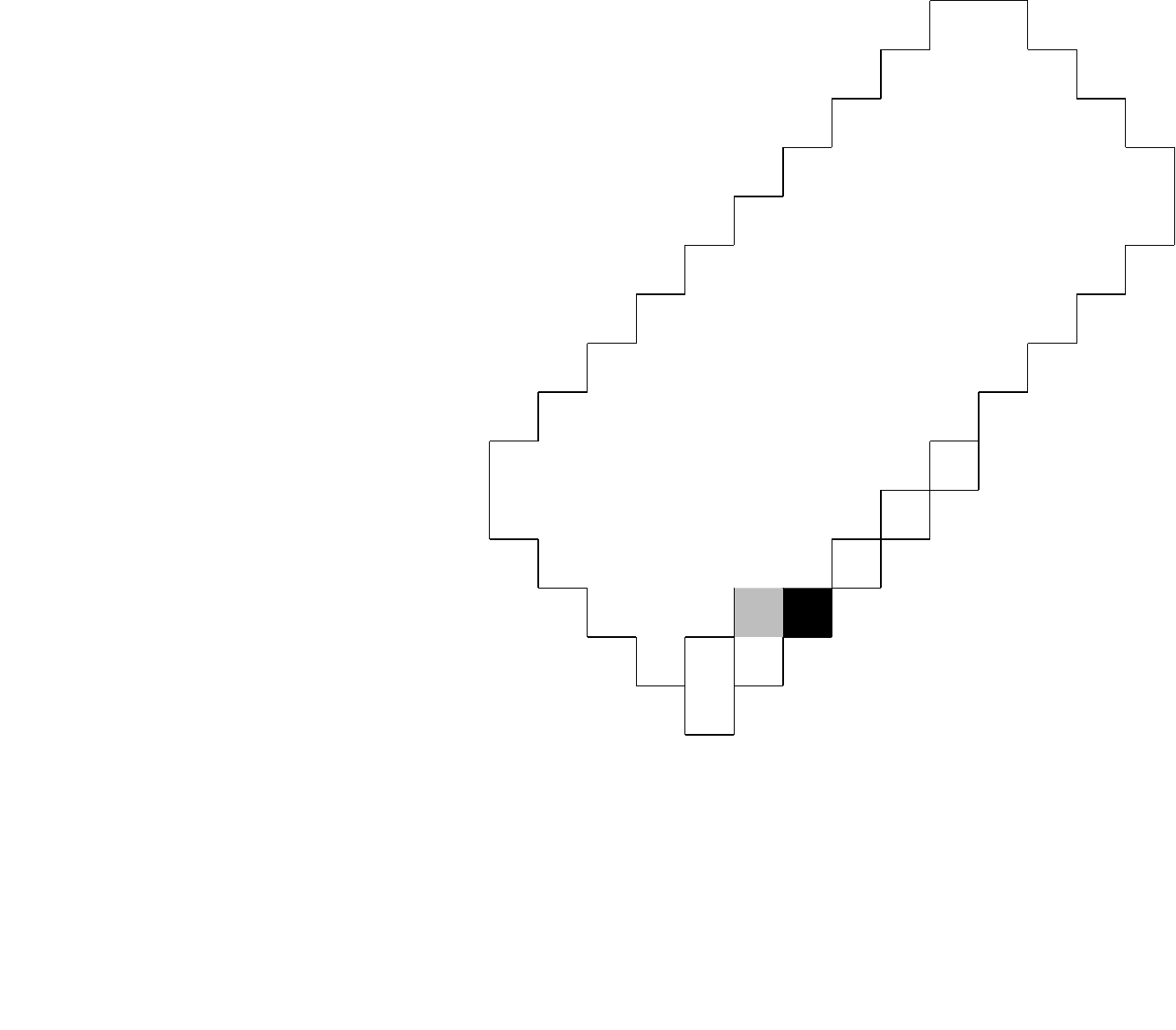}
\endminipage\hfill
\minipage{0.50\textwidth}
  \includegraphics[scale=0.6, center]{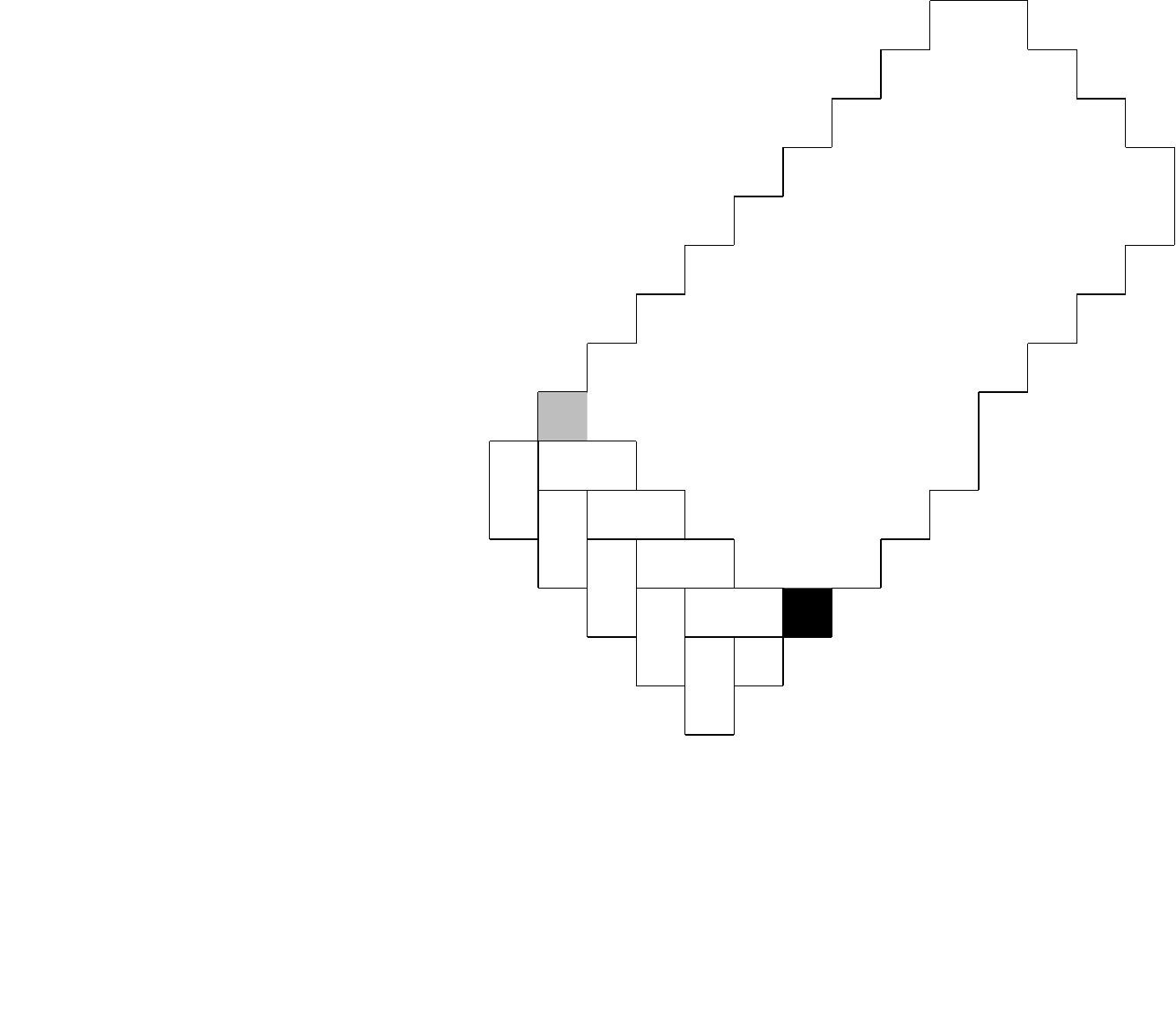}
\endminipage
\caption{Choices of $\al$ and $\ga$-defects that lead to no tiling of $\arr$}
\label{fig:al-ga-no}
\end{figure}

To prove the validity of the third statement, we notice that if an $\al$ and $\ga$ defect share an edge then, there are two possibilities, either the $\al$ defect is above the $\ga$ defect in which 
case we have some forced dominoes as shown in the left of Figure \ref{fig:al-ga-yes} and we are reduced to finding the number of domino tilings of an Aztec Diamond; or the 
$\al$-defect is to the left of a $\ga$-dent, in which case, we get no tilings as shown in the left of Figure \ref{fig:al-ga-no} as the forced dominoes interfere in this case.

If $\al_i$ and $\ga_j$ share no edge in common, then we get no tiling if the $\al$-defect is on the northwestern side at a distance of atmost $k-1$ from the western corner as illustrated 
in the right of Figure \ref{fig:al-ga-no}. If the $\al$-defect is in the northwestern side at a distance more than $k-1$ from the western corner then the situation is as shown in 
the right of Figure \ref{fig:al-ga-yes} and is described in Proposition \ref{ar_k-1_i}. If the $\al$-defect is in the southeastern side then the situation is as shown in the middle of Figure \ref{fig:al-ga-yes} 
and is described in Proposition \ref{ar_k_i}.

\begin{figure}[!htb]
\minipage{0.33\textwidth}
  \includegraphics[scale=0.45, left]{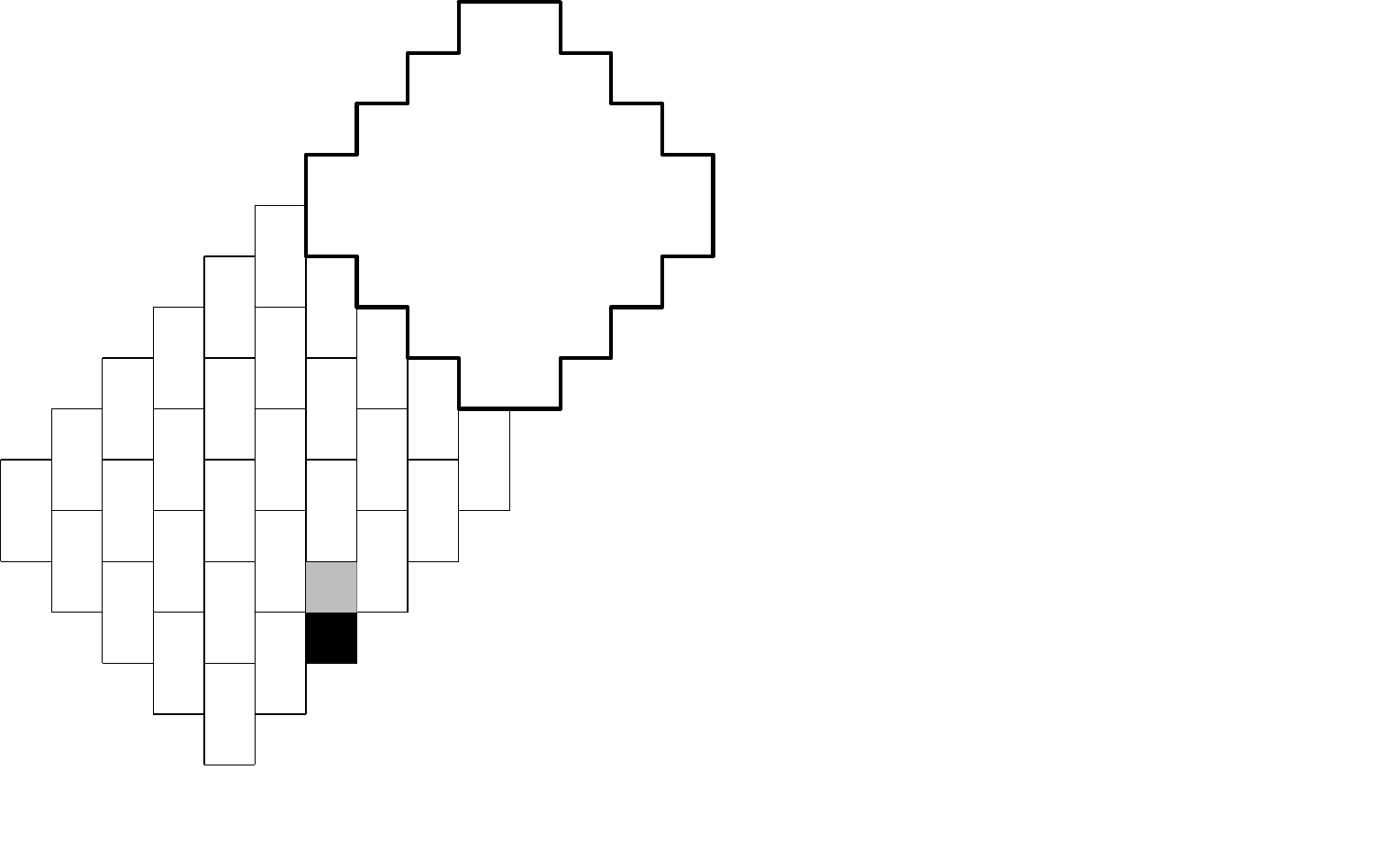}
\endminipage\hfill
\minipage{0.33\textwidth}
  \includegraphics[scale=0.45, center]{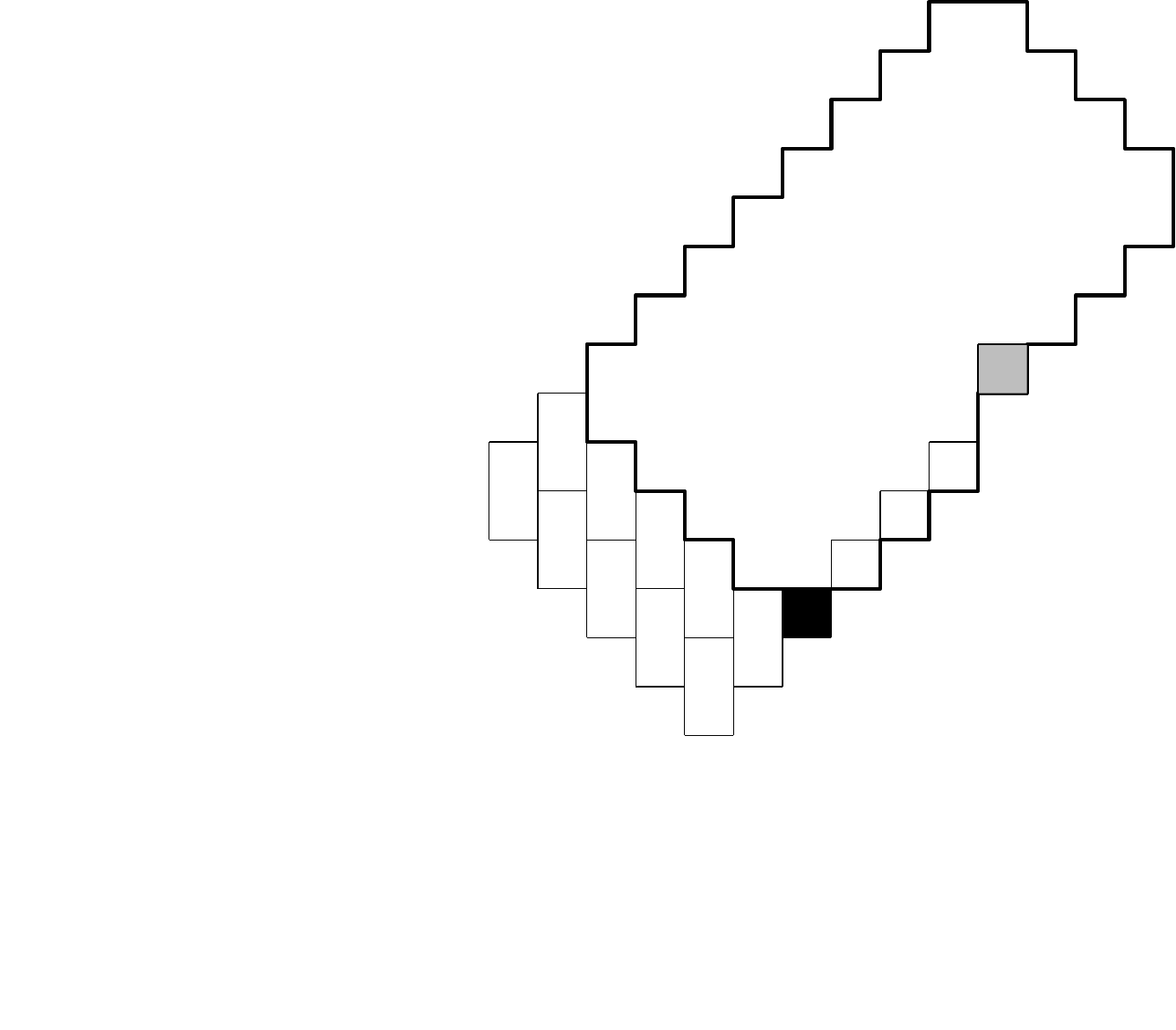}
\endminipage\hfill
\minipage{0.33\textwidth}
  \includegraphics[scale=0.45, right]{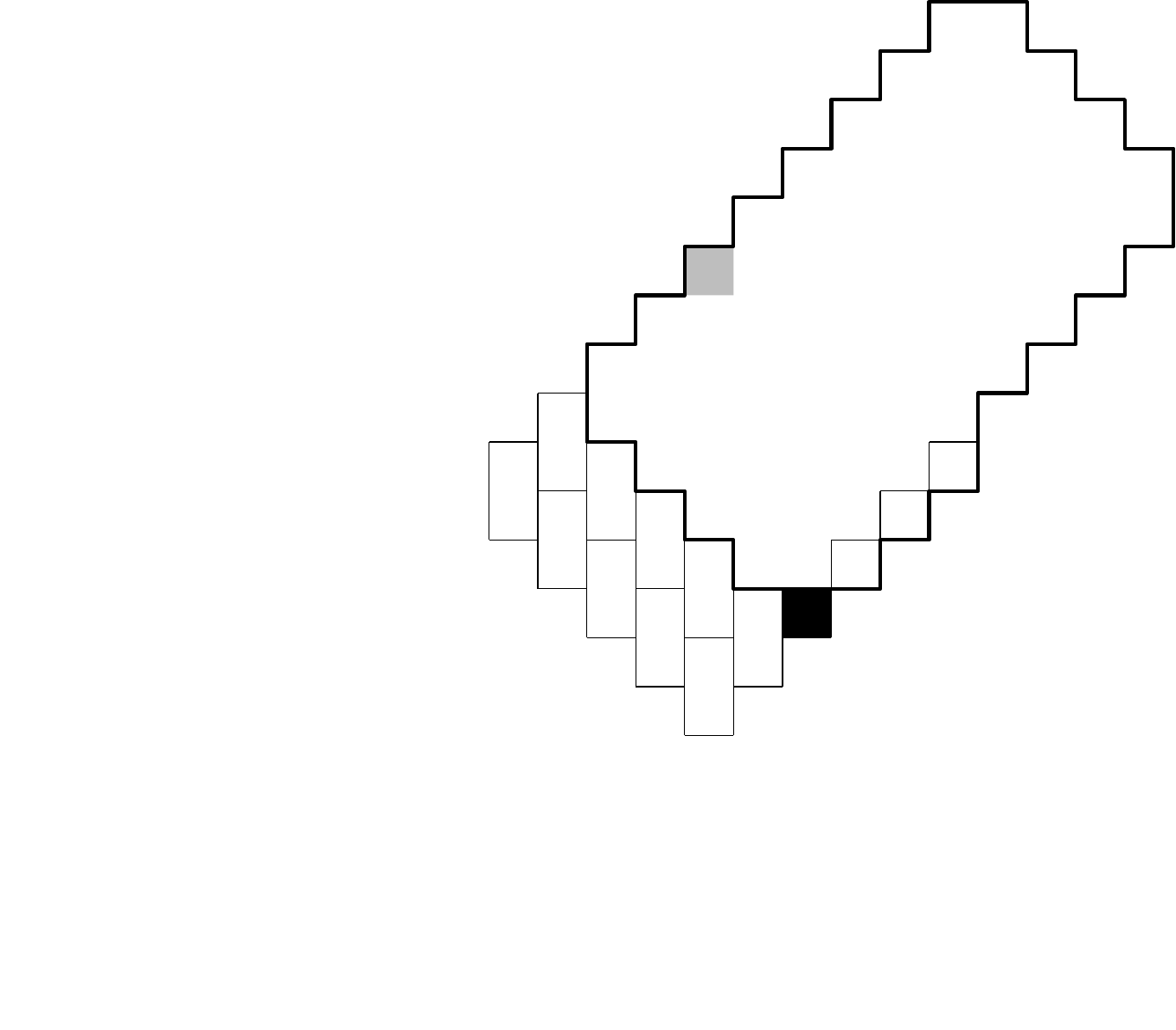}
\endminipage
\caption{Choices of $\al$ and $\ga$-defects that lead to tiling of $\arr$}
\label{fig:al-ga-yes}
\end{figure}

The fourth statement follows immediately from the checkerboard drawing (see Figure \ref{fig:check}) of an Aztec rectangle and the condition that a tiling by dominoes exists for such a board if and only if 
the number of white and black squares are the same. In all other cases, the number of tilings is $0$.

\end{proof}

\begin{proof}[Proof of Theorem \ref{mt2}]
 Let $\ar$ be the region obtained from $\arr$ by removing $k$ of the squares $\al_1, \ldots, \al_{n+k}$. We now apply Theorem \ref{condensation} to the planar dual graph of $\ar$, with the 
 removed squares choosen to be the vertices corresponding to the $n$ $\al_i$'s inside $\ar$ and to $\be_1, \ldots, \be_n$. The left hand side of equation \eqref{ciucu2} is now the 
 required number of tilings and the right hand side of equation \eqref{ciucu2} is the Pfaffian of a $2n\times 2n$ matrix with entries of the form $\m (\ar \setminus \{\al_i, \be_j\})$, 
 where $\al_i$ is not one of the unit squares that we removed from $\arr$ to get $\ar$.
 
 We now notice that $\m (\ar \setminus \{\al_i, \be_j\})$ is an Aztec rectangle with all its defects confined to three of the sides. So, we can apply Theorem \ref{mt1} and it gives us 
 an expression for $\m (\ar \setminus \{\al_i, \be_j\})$ as the Pfaffian of a $(2k+2)\times (2k+2)$ matrix of the type described in the statement of Theorem \ref{mt1}.
\end{proof}

\begin{proof}[Proof of Theorem \ref{mt3}]
 We shall now apply Theorem \ref{condensation} to the planar dual graph of $\ad(a)$ with removed squares choosen to correspond to $\al_1, \ldots, \al_n, \be_1, \ldots, \be_n$. The right hand
  side of equation \eqref{ciucu2} is precisely the right hand side of equation \eqref{emt3}. If $\de_i$ and $\de_j$ are of the same type then $\ad(a)\setminus \{\de_i, \de_j\}$ does 
  not have any tiling as the number of black and white squares in the checkerboard setting of an Aztec Diamond will not be the same (see Figure \ref{fig:check}). Finally, the proof 
  is complete once we note that $\ad(a)\setminus \{\al_i, \be_j\}$ is an Aztec Diamond with two defects removed from adjacent sides for any choice of $\al_i$ and $\be_j$ and is given by 
  Proposition \ref{ad_i_j}.
\end{proof}

\bibliographystyle{amsplain}

\end{document}